\newcommand\1{\lower 9pt\hbox{\underbar{}}}
\numberwithin{equation}{section}
\newtheorem {theorem}[equation]                   {Theorem}
\newtheorem {Question}[equation]          {Question}
\newtheorem {claim}[equation]          {Claim}
\newtheorem {example}[equation]         {Example}
\newtheorem {lemma}[equation]           {Lemma}
\newtheorem {proposition}[equation]     {Proposition}
\newtheorem {conjecture}[equation]      {Conjecture}
\newtheorem {question}[equation]      {Question}
\newtheorem {corollary}[equation]       {Corollary}
\theoremstyle{definition}
\newtheorem {definition}[equation]{Definition}
\newtheorem {remark}[equation]          {Remark}
\def\R{\mathbb{R}}
\def\u{\widetilde{u}}
\def\Ca{\mathcal{C}}
\newcommand{\norm}[1]{\left\lVert#1\right\rVert}
\begin{document}
\title{The singular Weinstein conjecture}
\author{Eva Miranda}\address{ Eva Miranda,
Laboratory of Geometry and Dynamical Systems, Department of Mathematics, EPSEB, Universitat Polit\`{e}cnica de Catalunya BGSMath Barcelona Graduate School of
Mathematics in Barcelona and
\\ IMCCE, CNRS-UMR8028, Observatoire de Paris, PSL University, Sorbonne
Universit\'{e}, 77 Avenue Denfert-Rochereau,
75014 Paris, France
 }\thanks{  {Eva} Miranda  is supported by the Catalan Institution for Research and Advanced Studies via an ICREA Academia Prize 2016. Cédric Oms is supported by an AFR-Ph.D. grant of  FNR - Luxembourg National Research Fund. Eva Miranda and Cédric Oms are partially supported  by the grants reference number MTM2015-69135-P (MINECO/FEDER) and reference number {2017SGR932} (AGAUR). Eva Miranda was supported by a \emph{Chaire d'Excellence} of the \emph{Fondation Sciences Mathématiques de Paris} when this project started and this work
has been supported by a public grant overseen by the French National Research Agency (ANR) as part of the \emph{\lq\lq Investissements d'Avenir"} program (reference:
ANR-10-LABX-0098). This material is based upon work supported by the National Science Foundation under Grant No. DMS-1440140 while the authors were in residence at the Mathematical Sciences Research Institute in Berkeley, California, during the Fall 2018 semester.}
 \email{ eva.miranda@upc.edu}
 \author{ C\'edric Oms}
\address{ C\'edric Oms,
Laboratory of Geometry and Dynamical Systems, Department of Mathematics, EPSEB, Universitat Polit\`{e}cnica de Catalunya BGSMath Barcelona Graduate School of
Mathematics in Barcelona}
 \email{ cedric.oms@upc.edu}

	\begin{abstract}  In this article, we investigate Reeb dynamics on $b^m$-contact manifolds, previously introduced in \cite{MO}, which are contact away from a hypersurface $Z$ but satisfy certain transversality conditions on $Z$. The study of these contact structures is motivated by that of contact manifolds with boundary. The search  of periodic Reeb orbits on those manifolds thereby starts with a generalization of the well-known Weinstein conjecture. Contrary to the initial expectations, examples of compact $b^m$-contact manifolds without periodic Reeb orbits outside $Z$ are provided. Furthermore, we prove that in dimension $3$, there are always infinitely many periodic orbits on the critical set if it is compact.  We prove that traps for the $b^m$-Reeb flow exist in any dimension. This investigation goes hand-in-hand with the Weinstein conjecture on non-compact manifolds having compact ends of convex type. In particular, we extend Hofer's arguments to open overtwisted contact manifolds that are $\R^+$-invariant in the open ends, obtaining as a corollary the existence of periodic $b^m$-Reeb orbits away from the critical set. The study of $b^m$-Reeb dynamics is motivated by well-known problems in fluid dynamics and celestial mechanics, where those geometric structures naturally appear. In particular, we prove that the dynamics on positive energy level-sets in the restricted planar circular three body problem are described by the Reeb vector field of a $b^3$-contact form that admits an infinite number of periodic orbits at the critical set.
	\end{abstract}

	\maketitle
	\section{Introduction}
	
The existence of periodic orbits of the Hamiltonian vector field on a given level-set of a Hamiltonian function is a central question in symplectic geometry. Historically, this question is being motivated through its applications to classical mechanics and has ever since given rise to spectacular developments in the field. This problem may be treated under different additional assumptions. In the contact context the Weinstein conjecture asserts that the Reeb vector field of a closed contact manifold has at least one periodic orbit. There are several Hamiltonian and symplectic relatives of this conjecture such as the Hamiltonian Seifert conjecture or the Conley conjecture about the periodic orbits of Hamiltonian systems on a symplectic manifold. The tools of Floer theory allow to obtain refinements of these conjectures and the whole community in symplectic and contact geometry has experimented a \emph{golden age period} during the last decades.

In this alluring symplectic and contact picture, the study of singularities has been neglected. General singularities are too complicated, however in the last years, particular singularities known under the name of $b^m$-symplectic or $\log$-symplectic forms, have been widely explored by several authors {\cite{GMP, gmps, GL, log, Scott}}. The geometry of $b^m$-manifolds appear in the study of manifolds with boundary. Assuming the manifold is even-dimensional, $b^m$-symplectic structures are symplectic away from the boundary but their associated Poisson structure meets some mild degeneracy conditions on the boundary. This can be generalized to manifolds with a fixed hypersurface, called \emph{critical hypersurface}. Recently, the geometry of the odd-dimensional counterpart of those have been studied by the authors in \cite{MO}.

	Contact structures appear as regular level-sets of symplectic manifolds whenever there exists a transverse Liouville vector field. This construction is connected to the study of Hamiltonian systems. Singularities in the orbits of the Hamiltonian system (as for instance homoclinic or more generally heteroclinic orbits) hinder the dynamical description in terms of contact geometry. This yields a first motivation to analyse the singular counterpart to contact structures in order to take these situations into account.
	
	Understanding the dynamics of the Reeb vector field in contact geometry and the Hamiltonian vector field has been (and still is) a leading question in the field. The global behaviour of the Reeb vector field is fundamentally different when there are singularities in the contact form.
	
	We show that there are compact examples of $b^m$-contact manifolds in any dimension without any periodic orbits away from the critical set. Similarly, by considering the symplectization, we show that $b^m$-symplectic manifolds can have very different dynamical behaviour as examples without periodic orbits of the Hamiltonian vector field on any level-set are constructed, which is related to the Hamiltonian Seifert conjecture.
	
	In dimension $3$, the dynamics on the critical set is Hamiltonian and as a consequence we prove that for compact $Z$, there are infinitely many periodic orbits on the critical set. This is especially interesting in view of the physical applications of $b^m$-contact structures.
	
	 Even though plugs for the smooth Reeb flow cannot exist by the proof in dimension $3$  of the Weinstein conjecture (respectively by the partial positive answers of the conjecture in higher dimensions), the existence of traps (which is a weaker notion than plugs) is a subtle topic. It was proved that in dimension $3$ traps do not exist for the Reeb flow, see \cite{hofereliashberg}. In the same paper, it was conjectured that a similar result holds in higher dimensions. This was elegantly disproved: the authors of \cite{GRZ} proved that for higher dimensions, traps for the Reeb flow do exist. We prove that the dynamics of the $b^m$-Reeb flow contrasts the smooth one: traps do exist in any dimension. The construction of traps for the $b^m$-Reeb flow strongly uses the singularization technique, previously introduced in \cite{MO} to prove the existence of $b^m$-contact structures.
	
In this article we also extend
Hofer's proof of Weinstein conjecture under the assumption of existence of overtwisted disk  (see \cite{Hofer})  to non-compact manifolds whenever some additional condition is imposed on the ends. Our approach is different from previous results on the existence of periodic orbits on non-compact compact manifolds as is done for instance in \cite{tentacular}. The condition imposed on the ends is compatible with that of $b^m$-contact structures and in general for any contact manifold with boundary admitting some transversality conditions close to the boundary or, more generally, for non-compact manifolds which can be compactified via a convex hypersurface. Coming back to the case of $b^m$-contact manifolds, a necessary condition to guarantee  the existence of periodic orbits away from the critical set is that the contact structure is overtwisted away from the critical set. To overcome non-compactness, we additionally assume $\R^+$-invariance around the critical set. Under those assumptions, we prove that the $J$-holomorphic curves methods developed by Hofer in \cite{Hofer} extend to this set-up. The non-triviality of this result originates in the non-compactness of the situation. We show that the Bishop family originating from the elliptic singularity of the overtwisted disk gives rise to bubbling of a finite energy plane in the symplectization. We show that the bubbling takes place either away from the critical set or in the $\R^+$-invariant part. The invariance here plays an important role to assure uniform convergence of the pseudoholomorphic disks by translating the disks in the $\R^+$-invariant direction. Those finite energy planes, as follows from classical results due to Hofer \cite{Hofer}, show that there exists either a periodic Reeb orbit away from the critical set or infinitely many periodic Reeb orbits in the $\R^+$-invariant neighbourhood.
	
	  The above-mentioned results naturally lead to a reformulation of the Weinstein conjecture for $b^m$-contact manifolds: indeed, summarizing the above discussion, the existence of overtwisted disks away from the critical set $Z$ and the additional symmetry around it constitute a sufficient condition for the existence of periodic orbits away from $Z$. In view of the compact examples without periodic orbits together with the possible presence of singularities of the dynamics on $Z$, we are lead to rewrite Weinstein conjecture for $b^m$-contact manifold. Not only this sheds new light on the study of homoclinic and heteroclinic orbits, but eventually this can lead to far-reaching extensions of variational approaches, and thereby ultimately Floer techniques, to the singular contact and symplectic realm, and in particular to an important class of Poisson manifolds.

   {The study of the Reeb dynamics on $b^m$-contact manifolds has substantial applications to celestial mechanics: the dynamics on positive energy level-sets of the restricted planar circular three body problem are described by the flow of a $b^3$-Reeb vector field. The critical set describes the manifold at infinity. Due to non-compactness, the above explained results a priori do not apply. However, even though non-compact, we prove that there are infinitely many periodic orbits on the critical set, thereby generalizing results about periodic orbits at infinity obtained in the parabolic case in \cite{DKRS} to the hyperbolic case.

     Other applications are obtained in the terrain of fluid dynamics in view of the correspondence between $b$-contact structures and Beltrami flows on manifolds with boundary (see \cite{CMP}). In \cite{CMPP} some universality features (in the sense of \cite{tao})  are proved for regular Euler equations \cite{CMPP} which could be extended to this novel singular set-up and could be useful for the study of blow-up in finite time of Navier-Stokes equations (see for instance  \cite{tao2}).}

	\textbf{Organization of this article:} We open this article with a review on $b$-symplectic and $b$-contact geometry. We continue by motivating the study of $b^m$-contact geometry at the hand of examples coming from both celestial mechanics and fluid dynamics and outline the importance of considering singularities in those examples. In Section \ref{sec:motivatingrevisited}, taking into account the singularities present in celestial mechanics, we prove the existence of infinitely many periodic Reeb orbits at infinity in the planar restricted three body problem, the initial motivating example of this paper. In Section \ref{sec:review} we continue with a survey on well-known results in Hamiltonian and Reeb dynamics that will guide us through the results that we are going to prove for $b^m$-contact and $b^m$-symplectic manifolds. In Section \ref{sec:counterexample} we prove the existence of infinitely many periodic Reeb orbits in dimension $3$ when $Z$ is compact and give examples of compact $b^m$-contact manifolds without periodic orbits away from the critical set. As a corollary we produce examples of $b^m$-symplectic manifolds with proper Hamiltonian functions without periodic orbits on all level-sets away from the critical set. In Section \ref{sec:trap}, we prove that the singularization can be used to prove a trap construction. We will prove in Section \ref{sec:bmhofer} that in the case of an $\R^+$-invariant $b^m$-contact manifold with an overtwisted disk away from the critical set, there are always periodic Reeb orbits away from the critical set.
 The above mentioned results lead us to reformulate the Weinstein conjecture in Section \ref{sec:singularweinstein} for $b^m$-contact manifolds about the existence of singular orbits as admissible solutions and discuss open problems.

	\textbf{Acknowledgements:} We are grateful to Alain Chenciner, Jacques Féjoz, Urs Frauenfelder, Viktor Ginzburg, Andreas Knauf and Charles-Michel Marle for several key conversations during the preparation of this article. Special thanks to Francisco Presas for suggesting Hofer's approach on overtwisted contact manifolds.
We are thankful to Daniel Peralta-Salas for pointing out and providing a complete proof that in the regular Beltrami case there cannot exist invariant submanifolds of a Hamiltonian vector field  along it, which contrasts the $b$-Beltrami case. We warmly thank Amadeu Delshams and Marcel Guardia for enlightening discussions concerning the three-body problem.  We are indebted to the Fondation Sciences Mathématiques de Paris for endowing the first author with a Chaire d'Excellence in 2017-2018 when this adventure started and to the Observatoire de Paris for being a source of inspiration for many of the constructions in this article and for their hospitality during the stay of both authors during the Fall and Winter of 2017-2018. Thanks also to Robert Cardona and Arnau Planas for their help with the figures in this article.

	\section{Preliminaries: $b^m$-Symplectic and $b^m$-contact geometry}\label{b-symp}
	
	In this section, we review the basics of $b^m$-symplectic and $b^m$-contact geometry. For more details, we refer the reader to  \cite{BDMOP} and \cite{gmw1} for a review on $b^m$-symplectic structures and to \cite{MO} for an extensive study of the topology and the geometry of $b^m$-contact manifolds.
	
	Let $(M^n,Z)$ be a smooth manifold of dimension $n$ with a hypersurface $Z$. In what follows, the hypersurface $Z$ will be called \emph{critical set}. Assume that there exists a global defining function for $Z$, that is $f:M\to \R$ such that $Z=f^{-1}(0)$. The space of vector fields that are tangent to $Z$ form a Lie sub-algebra of the Lie algebra of vector fields on $M$. By the Serre--Swan theorem \cite{S}, there exists an $n$-dimensional vector bundle which sections are given by the $b$-vector fields. We denote this vector bundle by ${^b}TM$, the \emph{$b$-tangent bundle}. We denote the dual of this vector bundle by ${^b}T^*M:=({^b}TM)^*$ and call it the \emph{$b$-cotangent bundle}. A $b$-form of degree $k$ is the section of the $k$th exterior wedge product of the $b$-cotangent bundle: $\omega \in \Gamma\big(\Lambda^k({^b}T^*M)\big):= {^b}\Omega^k(M)$. By the following decomposition lemma, we can easily extend the exterior derivative.
	
	\begin{lemma}[\cite{GMP}]\label{decomposition}
		Let $\omega \in {^b}\Omega^k(M)$ be a $b$-form of degree $k$. Then $\omega$ decomposes as follows:
		$$ \omega = \frac{df}{f}\wedge \alpha +\beta, \quad \alpha \in \Omega^{k-1}(M),\ \beta \in \Omega^k(M).$$
	\end{lemma}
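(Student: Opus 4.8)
The plan is to establish the decomposition in local charts and then assemble a global $\alpha$ and $\beta$ by a partition of unity; this is legitimate precisely because $(\alpha,\beta)\mapsto \frac{df}{f}\wedge\alpha+\beta$ is $C^\infty(M)$-linear in $(\alpha,\beta)$. I would cover $M$ by the open set $V_0:=M\setminus Z$ together with charts $V_a$ adapted to $f$ around the points of $Z$. On $V_0$ there is nothing to prove: the natural morphism ${^b}TM\to TM$ is an isomorphism there (every local vector field is vacuously tangent to $Z$), so ${^b}\Omega^k(V_0)=\Omega^k(V_0)$ and $\frac{df}{f}$ is an ordinary smooth $1$-form since $f$ is nowhere zero on $V_0$; one may take $\alpha\equiv 0$, $\beta=\omega|_{V_0}$. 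Hence the content is the local statement near $Z$.

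For the local model, note that since $f$ is a \emph{global} defining function, $0$ is a regular value, so the regular value theorem gives, around any $p\in Z$, coordinates $(x_1,\dots,x_n)$ on a neighbourhood $U$ with $f=x_1$ and $Z\cap U=\{x_1=0\}$. By Hadamard's lemma a vector field on $U$ is tangent to $Z$ exactly when it lies in the $C^\infty(U)$-span of $x_1\partial_{x_1},\partial_{x_2},\dots,\partial_{x_n}$, so these form a frame of ${^b}TM|_U$ and the dual frame of ${^b}T^*M|_U$ is $\frac{df}{f}=\frac{dx_1}{x_1},\,dx_2,\dots,dx_n$. Consequently $\Lambda^k({^b}T^*M)|_U$ is free on the wedges of these; writing $\omega|_U$ in this frame and separating the basis elements that contain $\frac{df}{f}$ from those that do not yields
$$\omega|_U=\frac{df}{f}\wedge\alpha_U+\beta_U,$$
where $\alpha_U\in\Omega^{k-1}(U)$ collects the coefficients of the former (a genuine smooth form, involving only $dx_2,\dots,dx_n$) and $\beta_U\in\Omega^{k}(U)$ collects the latter.

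To globalize, fix a partition of unity $\{\rho_0,\rho_a\}$ subordinate to $\{V_0,V_a\}$, set $(\alpha_0,\beta_0):=(0,\omega|_{V_0})$, and observe that $\rho_0\beta_0$ and each $\rho_a\alpha_a$, $\rho_a\beta_a$ extend by zero to smooth forms on $M$ (their supports are closed in $M$ and contained in the corresponding open set). Then $\alpha:=\sum_a\rho_a\alpha_a\in\Omega^{k-1}(M)$ and $\beta:=\rho_0\beta_0+\sum_a\rho_a\beta_a\in\Omega^{k}(M)$ are locally finite sums, and by $C^\infty(M)$-linearity of $\beta'\mapsto\frac{df}{f}\wedge\beta'$ one gets
$$\frac{df}{f}\wedge\alpha+\beta=\rho_0\,\omega|_{V_0}+\sum_a\rho_a\Big(\frac{df}{f}\wedge\alpha_a+\beta_a\Big)=\Big(\rho_0+\sum_a\rho_a\Big)\omega=\omega,$$
as desired. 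I do not expect a genuine obstacle here — the lemma is foundational — but the one point requiring care is exactly this last step: the local decompositions need not agree on overlaps $V_a\cap V_b$, so the $\alpha_a$'s cannot simply be glued, and it is the linearity of the decomposition map in $(\alpha,\beta)$ that makes the partition-of-unity averaging produce a valid global decomposition. (The decomposition is, of course, far from unique — $\alpha$ may be altered by any form pulling back to zero on $Z$, with a compensating change of $\beta$ — but no uniqueness is asserted.)
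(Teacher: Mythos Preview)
Your proof is correct and follows the standard route: local trivialisation of ${^b}T^*M$ near $Z$ using adapted coordinates, then a partition-of-unity globalisation exploiting the $C^\infty(M)$-linearity of $(\alpha,\beta)\mapsto\frac{df}{f}\wedge\alpha+\beta$. The paper itself does not supply a proof of this lemma; it simply cites \cite{GMP}, where the argument is essentially the one you give. There is nothing to compare beyond noting that your write-up is self-contained whereas the paper defers to the reference.
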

	The exterior derivative for $b$-forms is defined by
	$$d\omega:= \frac{df}{f}\wedge d\alpha+d\beta.$$
	Equipped with the extension of the exterior derivative, we are able to defined $b$-symplectic and $b$-contact forms.
	
	\begin{definition}
	\begin{enumerate}
		\item An even-dimensional $b$-manifold $W^{2n}$ with a $b$-form $\omega \in {^b}\Omega^2(W)$ is $b$-symplectic if $d\omega =0$ and $\omega^n \neq 0$ as a section of $\Lambda^{2n}({^b}T^*W)$.
		\item An odd-dimensional $b$-manifold $M^{2n+1}$ is $b$-contact if there exists a $b$-form $\alpha \in {^b}\Omega^1(M)$ such that $\alpha \wedge (d\alpha)^n\neq 0$. The $b$-form $\alpha$ is called $b$-contact form and the kernel $\ker \alpha \subset {^b}T^*M$ is called $b$-contact structure.
	\end{enumerate}
	\end{definition}
	
	Outside of the critical set $Z$, both definitions coincide with the usual definition of symplectic and contact manifolds respectively. The local theory of both are well-understood, as the language of differential forms for this complex gives rise to Moser's path method and Darboux local normal forms. Furthermore, $b$-symplectic, respectively $b$-contact manifolds, can be seen as particular cases of Poisson manifolds (see \cite{GMP}), respectively Jacobi manifolds (see \cite{MO}) satisfying some transversality conditions. Similarly, those transversality conditions can be relaxed by considering higher order tangencies. A similar construction to the one mentioned here leads to $b^m$-contact and $b^m$-symplectic structures, see \cite{Scott}. As one would expect, the symplectization of $b^m$-contact manifolds are $b^m$-symplectic manifolds and conversely, $b^m$-contact manifolds can be seen as hypersurface in $b^m$-symplectic manifolds that admit a transverse Liouville vector field. We emphasize that the Reeb vector field associated to a $b^m$-contact form can be singular. In fact, the vanishing of the Reeb vector field determines the local geometry of the $b^m$-contact form as is proved in the $b^m$-Darboux theorem in \cite{MO}.
	
We give two examples of $b^m$-contact manifolds that possess some interesting dynamical properties as we will see later.
	
	\begin{example}\label{example:S3}
		The $3$-dimensional sphere $S^3$ admits a $b$-contact structure. Consider the $\R^4$ with the standard $b$-symplectic structure $\omega=\frac{dx_1}{x_1}\wedge dy_1 + dx_2\wedge dy_2$ and denote by $S^3$ the unit sphere in $\R^4$. The Liouville vector field $X=\frac{1}{2}x_1\frac{\partial}{\partial x_1}+y_1\frac{\partial}{\partial y_1}+\frac{1}{2}(x_2 \frac{\partial}{\partial x_2}+y_2\frac{\partial }{\partial y_2})$ is transverse to the sphere and hence $\iota_X \omega$ defines a $b$-contact form on $S^3$. The critical set is a $2$-dimensional sphere, $S^2$, given by the intersection of the sphere with the hyperplane $x_1=0$.
	\end{example}
	
	\begin{example}\label{example:3torus}
	Consider the torus $\mathbb{T}^2$ as a $b$-manifold where the boundary component if given by two disjoint copies of $S^1$. The unit cotangent bundle $S^*\mathbb{T}^2$, diffeomorphic to the $3$-torus $\mathbb{T}^3$ is a $b$-contact manifold with $b$-contact form given by $\alpha=\sin\phi\frac{dx}{\sin(x)}+\cos \phi dy$, where $\phi$ is the coordinate on the fiber and $(x,y)$ the coordinates on $\mathbb{T}^2$.
	\end{example}
	
	To study the dynamics on the critical set in dimension $3$, we make use of the fact that the Reeb vector field restricted to the critical set is Hamiltonian.
	
		\begin{proposition}[\cite{MO}]\label{prop:ReebHamdim3}
		Let $(M,\alpha)$ be a $b$-contact manifold of dimension $3$ and let us write $\alpha=u\frac{dz}{z}+\beta$, $u\in C^\infty(M)$ and $\beta \in \Omega^1(M)$ as in Lemma \ref{decomposition}. Then the restriction on $Z$ of the $2$-form $\Theta= u d\beta+\beta\wedge du$ is symplectic and the Reeb vector field is Hamiltonian with respect to $\Theta$ with Hamiltonian function $u$, i.e. $\iota_R \Theta= du$.
	\end{proposition}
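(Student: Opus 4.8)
The plan is to prove both assertions by a computation in local $b$-coordinates adapted to $Z$. As a preliminary I would record that $\Theta|_{Z}$ and $u|_{Z}$ are intrinsic: the decomposition of Lemma \ref{decomposition} is unique only up to $(u,\beta)\mapsto(u+zh,\beta-h\,dz)$ with $h\in C^{\infty}(M)$, and it depends on the choice of defining function, but a short computation using $d\!\left(\frac{dz}{z}\right)=0$, the closedness of $\frac{dg}{g}$ under a change $z\mapsto gz$, and $j^{*}(h\,dz)=0$ for $j\colon Z\hookrightarrow M$ shows that the pullback $j^{*}\Theta$ and the function $u|_{Z}$ are unchanged. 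Since $\dim Z=2$, every $2$-form on $Z$ is closed, so here ``symplectic'' means simply ``nowhere vanishing''.

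To get nondegeneracy, take coordinates $(z,x,y)$ near a point of $Z=\{z=0\}$, so that $\frac{dz}{z},dx,dy$ is a local frame of ${}^{b}T^{*}M$; absorbing the $dz$-component of $\beta$ into $u$ (which changes neither $u|_{Z}$ nor $\Theta|_{Z}$) I may assume $\beta=q\,dx+r\,dy$. Computing $d\alpha$ by Lemma \ref{decomposition} and expanding the product, I expect
\[
\alpha\wedge d\alpha=\big(\lambda+z\cdot(\text{smooth})\big)\,\frac{dz}{z}\wedge dx\wedge dy,
\]
where $\lambda$ is exactly the $dx\wedge dy$-coefficient of $u\,d\beta+\beta\wedge du$; since the $dz$-terms of $\Theta$ pull back to zero on $Z$, this gives $j^{*}\Theta=\big(\lambda|_{z=0}\big)\,dx\wedge dy$. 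The $b$-contact condition forces the coefficient of $\alpha\wedge(d\alpha)$ with respect to the top $b$-form to be nowhere zero, hence $\lambda|_{z=0}\neq0$, i.e.\ $\Theta|_{Z}$ is a nowhere-vanishing area form on $Z$.

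For the Hamiltonian statement I would use that, by the $b$-contact condition, the Reeb field $R$ is a well-defined smooth section of ${}^{b}TM$ with $\iota_{R}\alpha=1$ and $\iota_{R}d\alpha=0$ (see \cite{MO}); being a $b$-vector field it is tangent to $Z$, so in the $b$-frame $R=\widetilde{R}^z(z\partial_{z})+R^{x}\partial_{x}+R^{y}\partial_{y}$ with smooth coefficients, and $R|_{Z}$ is a genuine vector field on $Z$. From $\iota_{R}d\alpha=0$ and the expression for $d\alpha$, the $1$-form $(\iota_{R}du)\,\frac{dz}{z}$ is smooth (all other terms of $\iota_{R}d\alpha$ are), so $\iota_{R}du$ vanishes on $Z$; pulling $\iota_{R}d\alpha=0$ back along $j$ annihilates the $\frac{dz}{z}$-term and leaves $\iota_{R|_{Z}}\big(d(\beta|_{Z})\big)=\widetilde{R}^z|_{Z}\,d(u|_{Z})$; and pulling $\iota_{R}\alpha=1$ back gives $u|_{Z}\,\widetilde{R}^z|_{Z}+\iota_{R|_{Z}}(\beta|_{Z})=1$. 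Expanding $\iota_{R|_{Z}}(\Theta|_{Z})$ out of $\Theta|_{Z}=u|_{Z}\,d(\beta|_{Z})+\beta|_{Z}\wedge d(u|_{Z})$ by the Leibniz rule, the term carrying $\iota_{R|_{Z}}d(u|_{Z})=(\iota_{R}du)|_{Z}$ drops out, and substituting the two previous relations the rest collapses to $\big(u|_{Z}\,\widetilde{R}^z|_{Z}+\iota_{R|_{Z}}\beta|_{Z}\big)\,d(u|_{Z})=d(u|_{Z})$, which is $\iota_{R}\Theta=du$ on $Z$.

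The separate computations are routine; the point that needs care is the bookkeeping at the junction between $b$-forms, which are singular along $Z$, and ordinary forms on $Z$: that $j^{*}\big((\iota_{R}du)\,\frac{dz}{z}\big)=0$, that pulling back to $Z$ commutes with contracting by $R$ because $R$ is tangent to $Z$, and, most delicately, that the leading coefficient of $\alpha\wedge(d\alpha)$ along $Z$ reproduces $\Theta|_{Z}$ with the correct sign (rather than $u\,d\beta-\beta\wedge du$). Getting that sign right means matching the convention for the $b$-differential fixed after Lemma \ref{decomposition}; that is the one place where a hasty computation would land on the wrong $\Theta$, and hence the wrong Hamiltonian.
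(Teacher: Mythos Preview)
The paper does not actually prove this proposition; it is quoted from \cite{MO} and stated without argument, so there is no in-paper proof to compare against. That said, your computation is correct and is exactly the kind of direct verification one expects: writing $\alpha=u\,\tfrac{dz}{z}+\beta$, expanding $\alpha\wedge d\alpha$, and reading off that the leading $\tfrac{dz}{z}\wedge dx\wedge dy$ coefficient on $Z$ is the $dx\wedge dy$-coefficient of $u\,d\beta+\beta\wedge du$ gives nondegeneracy of $\Theta|_Z$; and the contraction argument using $\iota_R\alpha=1$, $\iota_R d\alpha=0$, together with $\iota_R du|_Z=0$, cleanly yields $\iota_{R}\Theta=du$ on $Z$.

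One small stylistic comment: your caution about signs is well placed, but note that with the convention $d\alpha=du\wedge\tfrac{dz}{z}+d\beta$ (which is what the paper's $b$-differential gives for a $1$-form), the term $\beta\wedge du\wedge\tfrac{dz}{z}$ contributes with the \emph{same} sign as $u\,\tfrac{dz}{z}\wedge d\beta$ after cyclically permuting $dx\wedge dy\wedge\tfrac{dz}{z}=\tfrac{dz}{z}\wedge dx\wedge dy$, so the ``delicate'' sign check you flag in the last paragraph in fact lands on $u\,d\beta+\beta\wedge du$ directly, not on $u\,d\beta-\beta\wedge du$. Your preliminary remark that $\Theta|_Z$ and $u|_Z$ are intrinsic (independent of the decomposition and of the defining function) is a nice touch that the paper omits.
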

	
	We furthermore notice that $u$ is not constant on closed critical sets in dimension $3$. Indeed, if $u|_Z$ was constant, then the area form on $Z$ given by $\Theta=ud\beta$ is exact which contradicts Stokes theorem. We therefore have the following proposition.

	\begin{proposition}\label{prop:unonconstant3dim}
	Let $(M,\alpha=u\frac{dz}{z}+\beta)$ be a $3$-dimensional $b$-contact manifold with a closed critical hypersurface $Z$, where $u\in C^\infty(M)$ and $\beta \in \Omega^1(M)$ as before. Then the function $u|_Z$ is non-constant.
	\end{proposition}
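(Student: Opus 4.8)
The plan is to argue by contradiction, turning the heuristic stated just before the proposition into a rigorous application of Proposition \ref{prop:ReebHamdim3} together with Stokes' theorem. Write $\iota\colon Z\hookrightarrow M$ for the inclusion. Recall that $Z=z^{-1}(0)$ is a closed hypersurface and that by hypothesis it is compact without boundary; it suffices to derive a contradiction on a single connected component $Z_0$ of $Z$, so suppose for contradiction that $u|_{Z_0}\equiv c$ for some $c\in\R$.

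The next step is the pullback computation. Since $\iota^*u=u|_{Z_0}$ is constant, $\iota^*(du)=d(\iota^*u)=0$, so
\[
\iota^*\Theta=\iota^*\bigl(u\,d\beta+\beta\wedge du\bigr)=(u|_{Z_0})\,\iota^*(d\beta)=d\bigl(c\,\iota^*\beta\bigr),
\]
which shows that $\iota^*\Theta$ is an exact $2$-form on $Z_0$. On the other hand, Proposition \ref{prop:ReebHamdim3} guarantees that $\iota^*\Theta$ is a symplectic form on $Z$, hence a nowhere-vanishing top-degree form on the surface $Z_0$; in particular it orients $Z_0$ and $\int_{Z_0}\iota^*\Theta\neq 0$. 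Since $Z_0$ is a closed surface, Stokes' theorem yields $\int_{Z_0} d\bigl(c\,\iota^*\beta\bigr)=0$, contradicting $\int_{Z_0}\iota^*\Theta\neq 0$. Therefore $u|_Z$ cannot be constant.

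The argument is short, and the only points that need a little care are bookkeeping ones: that the objects in question are genuinely pullbacks to $Z$ (so that constancy of $u|_Z$ really forces $d(u|_Z)=0$), that a nonvanishing $2$-form on a closed surface has nonzero integral — which simultaneously guarantees the orientability of the component and makes Stokes available — and that ``closed critical hypersurface'' is understood as compact without boundary, so that no boundary term appears. I do not expect any substantive obstacle here; the entire content of the statement is already packaged in Proposition \ref{prop:ReebHamdim3} plus Stokes' theorem.
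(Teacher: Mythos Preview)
Your argument is correct and follows exactly the line sketched in the paper: assuming $u|_Z$ constant makes $\Theta|_Z=u\,d\beta$ exact, contradicting via Stokes the fact (Proposition~\ref{prop:ReebHamdim3}) that $\Theta|_Z$ is an area form on the closed surface $Z$. You have simply made explicit the bookkeeping (pullback, connected component, orientability) that the paper leaves implicit.
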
	
	
	In \cite{MO}, the existence of $b^m$-contact structures is solved by relating the critical set to convex hypersurfaces in contact manifolds. More precisely, the following is proved.
	
	\begin{theorem}[\cite{MO}]\label{thm:existenceb2k}
		Let $(M,\xi)$ be a contact manifold and let $Z$ be a convex hypersurface in $M$. Then $M$ admits a $b^{2k}$-contact structure for all $k$ that has $Z$ as critical set.
	\end{theorem}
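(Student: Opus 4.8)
The plan is to take the Giroux normal form of a neighbourhood of the convex hypersurface $Z$ and turn it into a $b^{2k}$-contact form by inserting, transverse to $Z$, a pole of order exactly $2k$, interpolating with the ambient contact form so that the object stays contact away from $Z$. First I would fix the local model. Since $Z$ is convex, there is a contact vector field $X$ transverse to $Z$; flowing $Z$ along $X$ identifies a tubular neighbourhood $\mathcal{N}(Z)$ with $Z\times(-\varepsilon,\varepsilon)$, with $X=\partial_t$ on the second factor, and since $X$ is a contact vector field $\xi$ is invariant under the $\partial_t$-translations. Hence $\xi=\ker(\beta+u\,dt)$ with $\beta\in\Omega^1(Z)$ and $u\in C^\infty(Z)$ independent of $t$, the dividing set being $\Gamma=\{u=0\}$. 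After rescaling the ambient contact form by a positive function (which changes neither $\xi$ nor anything outside $\mathcal{N}(Z)$) one may assume it equals $\alpha_0:=\beta+u\,dt$ on a smaller collar $Z\times(-\varepsilon',\varepsilon')$. Writing $\dim M=2n+1$, a direct computation gives, for every smooth $h=h(t)$ and with $\Theta:=u\,(d\beta)^n+n\,\beta\wedge du\wedge(d\beta)^{n-1}\in\Omega^{2n}(Z)$,
\[
(u\,h\,dt+\beta)\wedge\bigl(d(u\,h\,dt+\beta)\bigr)^n=h(t)\,\Theta\wedge dt,
\]
because $\beta\wedge(d\beta)^n=0$ for dimensional reasons; note that $\Theta$ does not involve $h$, and that $\Theta$ is nowhere zero on $Z$ precisely because $\alpha_0$ is contact (on $\Gamma$ this reads $\beta\wedge du\wedge(d\beta)^{n-1}\neq0$, which is Giroux's convexity criterion).

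Next I would insert the pole. Take a defining function for $Z$ equal to $t$ on $\mathcal{N}(Z)$ and a cut-off $\chi\in C^\infty(-\varepsilon,\varepsilon)$ with $\chi\equiv1$ near $0$ and $\chi\equiv0$ near $\pm\varepsilon'$, and set $h(t):=\chi(t)\,t^{-2k}+(1-\chi(t))$ on $(-\varepsilon,\varepsilon)\setminus\{0\}$. Then $h>0$, $h\equiv1$ near $\pm\varepsilon'$, and $t^{2k}h(t)=\chi(t)+(1-\chi(t))t^{2k}$ extends to a smooth function on $(-\varepsilon,\varepsilon)$ that is positive everywhere and equals $1$ at $t=0$; hence $h\,dt=\bigl(t^{2k}h\bigr)\,\tfrac{dt}{t^{2k}}$ is a genuine $b^{2k}$-form, with a pole of order exactly $2k$ along $Z$ and non-vanishing leading coefficient there. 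Define $\alpha$ to be the ambient contact form outside $\mathcal{N}(Z)$ and $u\,h\,dt+\beta$ on $\mathcal{N}(Z)$; these agree on $\{|t|\ge\varepsilon'\}$ since $h\equiv1$ there. Using the $b^{2k}$-analogue of Lemma \ref{decomposition} (see \cite{MO,Scott}), for which the $b^{2k}$ exterior derivative equals the ordinary one away from $Z$ and $d\bigl(\tfrac{dt}{t^{2k}}\bigr)=0$, the displayed identity yields
\[
\alpha\wedge(d\alpha)^n=\bigl(t^{2k}h\bigr)\,\Theta\wedge\tfrac{dt}{t^{2k}}\quad\text{on }\mathcal{N}(Z),
\]
a nowhere-vanishing section of $\Lambda^{2n+1}\bigl({}^{b^{2k}}T^*M\bigr)$ since $t^{2k}h>0$ and $\Theta$ is nowhere zero; away from $\mathcal{N}(Z)$ this is the ordinary contact condition. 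Thus $\alpha$ is a $b^{2k}$-contact form with critical set exactly $Z$, and the construction depends on $k$ only through the order of the pole, so it applies for every $k$.

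The computations are routine; the two substantive points are (i) that convexity is exactly the hypothesis making the degenerate model contact, packaged in $\Theta\neq0$ on $Z$ — the only difference from the smooth setting being that this must also be checked on the dividing set $\Gamma$, where it is Giroux's criterion — and (ii) that a pole of any order can be inserted without breaking contactness, because the $b^{2k}$-contact condition sees the inserted coefficient $h$ only through its non-vanishing. The step deserving genuine care is checking that the glued object is really $b^{2k}$, i.e.\ that the pole has order exactly $2k$ with the coefficient of $\tfrac{dt}{t^{2k}}$ non-zero along $Z$, and that $Z$ rather than a larger hypersurface is the critical set; both are forced by the explicit choice of $h$ above.
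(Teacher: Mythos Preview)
The paper does not give a proof of this statement here---it is cited from \cite{MO}---but the singularization construction you carry out is exactly the one the paper invokes in the trap construction of Section~\ref{sec:trap}, where the Giroux normal form $\alpha=g(u\,dt+\beta)$ on a collar of the convex hypersurface is turned into the $b^{2k}$-form $g(u\,df_\epsilon+\beta)$; your $h\,dt$ is precisely $df_\epsilon$. Your argument is correct (the collar bookkeeping near $|t|=\varepsilon'$ is slightly garbled but trivially fixed by gluing on the smaller collar), and the only cosmetic difference from the cited construction is that you first rescale away the conformal factor $g$ while \cite{MO} keeps it---this does not affect the $b^{2k}$-contact structure.
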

	
	A similar result holds for $b^{2k+1}$-contact structures, where the critical set $Z$ is given by two diffeomorphic copies of the convex hypersurface $Z$.

\begin{theorem}[\cite{MO}]\label{thm:existenceb2k+1}
		Let $(M,\xi)$ be a contact manifold and let $Z$ be a convex hypersurface in $M$. Then $M$ admits a $b^{2k+1}$-contact structure for all $k$ that has two  connected components, both diffeomorphic to $Z$, as critical set.  Additionally, one of the hypersurfaces can be chosen to be $Z$.
	\end{theorem}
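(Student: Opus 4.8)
The plan is to build the desired $b^{2k+1}$-contact form by a \emph{singularization} of $\alpha$ supported in a tubular neighbourhood of $Z$: we leave the contact structure untouched away from $Z$ and replace the ``normal direction'' by a $b^{2k+1}$-form whose zero locus is a pair of parallel copies of $Z$. First I would put $\alpha$ into the convex normal form. Since $Z$ is convex there is a contact vector field transverse to $Z$ whose flow identifies a neighbourhood of $Z$ with $Z\times(-1,1)$, with $t$ the coordinate along the flow and $Z=Z\times\{0\}$, in such a way that a contact form for $\xi$ reads $\alpha=\beta+u\,dt$ with $\beta\in\Omega^1(Z)$ and $u\in C^\infty(Z)$ both independent of $t$. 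A direct computation then gives $\alpha\wedge(d\alpha)^n=\mu\wedge dt$, where $\mu:=u\,(d\beta)^n+n\,\beta\wedge(d\beta)^{n-1}\wedge du$ is a $2n$-form pulled back from $Z$; since $\alpha$ is contact, $\mu$ is nowhere zero on $Z$.

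Next I would choose a smooth function $f\colon(-1,1)\to\R$ with the following properties: $f\equiv 1$ near $t=\pm1$; $f(t)=t^{2k+1}$ on a small interval around $0$; and $f$ has exactly one further zero $t_1\in(-1,0)$, with $f(t)=c\,(t-t_1)^{2k+1}$ and $c<0$ near $t_1$, and $f$ nonzero otherwise. Such an $f$ exists: because $t^{2k+1}<0$ for $t<0$ small while $f=1>0$ at $t=-1$, a sign count forces $f$ to vanish somewhere in $(-1,0)$, and one arranges that this happens exactly once and to order $2k+1$, keeping $f>0$ on $(0,1)$. Now set
$$\widetilde\alpha:=\beta+\frac{u}{f(t)}\,dt\quad\text{on the collar},\qquad \widetilde\alpha:=\alpha\quad\text{elsewhere on }M;$$
the two formulas agree where $f\equiv1$. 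Near $Z\times\{0\}$ (local defining function $t$) and near $Z\times\{t_1\}$ (local defining function $t-t_1$), the summand $\tfrac{u}{f}\,dt$ is a smooth multiple of the model forms $\tfrac{dt}{t^{2k+1}}$, resp. $\tfrac{dt}{(t-t_1)^{2k+1}}$, so $\widetilde\alpha$ is a genuine $b^{2k+1}$-form whose critical set is $Z\times\{0\}\sqcup Z\times\{t_1\}$ --- two copies of $Z$, one of which is $Z$ itself, as required.

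It then remains to verify the non-degeneracy condition. Since $\beta,u$ are $t$-independent and $f$ depends only on $t$, the Leibniz rule and closedness of $\tfrac{dt}{f}$ in the $b^{2k+1}$-calculus give $d\widetilde\alpha=d\beta+du\wedge\tfrac{dt}{f}$, whence a short expansion yields $\widetilde\alpha\wedge(d\widetilde\alpha)^n=\mu\wedge\tfrac{dt}{f}$. Away from the critical set $f\neq0$ and $\mu\neq0$, so this is nowhere zero; near $Z\times\{0\}$ it equals $\mu\wedge\tfrac{dt}{t^{2k+1}}$, a nowhere-vanishing section of $\Lambda^{2n+1}({}^{b^{2k+1}}T^*M)$, and near $Z\times\{t_1\}$ it equals $\tfrac1c\,\mu\wedge\tfrac{dt}{(t-t_1)^{2k+1}}$, again nowhere vanishing. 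Thus $\widetilde\alpha$ is a $b^{2k+1}$-contact form on $M$ with the asserted critical set.

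The only genuinely delicate point --- and the place the parity enters --- is the choice of $f$: for the odd exponent the model one-form $\tfrac{dt}{t^{2k+1}}$ changes sign across $Z$, which is exactly what prevents $f$ from being sign-definite and forces a second order-$(2k+1)$ zero, hence a second copy of $Z$; for the even exponent this obstruction disappears, and one recovers Theorem~\ref{thm:existenceb2k} by running the identical argument with $f>0$ away from a single order-$2k$ zero at $t=0$. The remaining verifications are routine bookkeeping: fixing a global defining function for the critical set of $M$ (and the appropriate jet for $m=2k+1>1$) that agrees with $t$ near $Z\times\{0\}$ and with $t-t_1$ near $Z\times\{t_1\}$, and checking that the two pieces of $\widetilde\alpha$ glue to a globally defined $b^{2k+1}$-form on $M$.
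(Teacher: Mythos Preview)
The paper does not actually prove this theorem here; it is quoted from \cite{MO} without argument. Your singularization construction is correct and is precisely the method of \cite{MO}, which this paper also invokes explicitly in Section~\ref{sec:trap} for the $b^{2k}$ case: put the contact form in the $t$-invariant Giroux normal form $\alpha=u\,dt+\beta$ near the convex hypersurface, replace $dt$ by $\tfrac{dt}{f(t)}$ for a suitable $f$, and verify that $\widetilde\alpha\wedge(d\widetilde\alpha)^n=\mu\wedge\tfrac{dt}{f}$ remains a nowhere-vanishing section of the top $b^{2k+1}$-exterior power. Your parity argument---that $t^{2k+1}$ changes sign while $f$ must equal $1$ at both ends of the collar, forcing a second order-$(2k+1)$ zero and hence a second copy of $Z$---is exactly the reason two components appear in the odd case and only one in Theorem~\ref{thm:existenceb2k}.
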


The topology of $b^m$-contact manifolds can be related to the one of contact manifolds through a desingularization technique, similar to the one introduced in \cite{gmw1}. A necessary condition for applying the desingularization is that the $b^m$-contact form is almost convex.

\begin{definition}\label{def:bcontactconvex}
		We say that a $b^m$-contact structure $(M,\ker \alpha)$ is almost convex if $\beta= \pi^*\tilde{\beta}$, where $\pi:\mathcal{N}(Z) \to Z$ is the projection from a tubular neighbourhood of $Z$ to the critical set and $\tilde{\beta}\in \Omega^1(Z)$. We will abuse notation and write $\beta \in \Omega^1(Z)$. We say that a $b^m$-contact structure is convex if $\beta \in \Omega^1(Z)$ and $u \in C^\infty(Z)$.
	\end{definition}
	
	Using almost convexity, the following is proved.
	
	\begin{theorem}[\cite{MO}]\label{thm:desingularizationb2k}
		Let $(M^{2n+1},\ker \alpha)$ a $b^{2k}$-contact structure with critical hypersurface $Z$. Assume that $\alpha$ is almost convex. Then there exists a family of contact forms $\alpha_\epsilon$ which coincides with the $b^{2k}$-contact form $\alpha$ outside of an $\epsilon$-neighbourhood of $Z$. The family of bi-vector fields $\Lambda_{\alpha_\epsilon}$  and the family of vector fields $R_{\alpha_\epsilon}$ associated to the Jacobi structure of the contact form $\alpha_\epsilon$ converges to the bivector field $\Lambda_{\alpha}$ and to the vector field $R_\alpha$ in the $C^{2k-1}$-topology as $\epsilon \to 0$.
	\end{theorem}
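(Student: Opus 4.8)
The plan is to mimic, in the contact category, the desingularization of $b^m$-symplectic forms carried out in \cite{gmw1}, the essential new feature being that the order $m=2k$ is \emph{even}. First I would localize near $Z$. Fix a tubular neighbourhood $\mathcal{N}(Z)$ with defining function $f$ and projection $\pi\colon\mathcal{N}(Z)\to Z$; by the $b^{2k}$-analogue of the decomposition Lemma \ref{decomposition} (see \cite{Scott}) together with almost convexity, on $\mathcal{N}(Z)$ we may write
\[
\alpha \;=\; u\,\frac{df}{f^{2k}}\;+\;\beta, \qquad u\in C^\infty(\mathcal{N}(Z)),\quad \beta=\pi^*\tilde\beta,\ \tilde\beta\in\Omega^1(Z),
\]
where any lower-order singular terms $\frac{df}{f^{j}}$, $j<2k$, occurring in the $b^{2k}$-decomposition are handled identically and, being dominated near $Z$ by the leading term, may be suppressed in this sketch. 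The idea is to smooth the \emph{coefficient} $f^{-2k}$ of $df$ rather than a primitive: for each $\epsilon>0$ choose a smooth even function $g_\epsilon\colon\R\to\R_{>0}$ with $g_\epsilon(t)=t^{-2k}$ for $|t|\ge\epsilon$, and set $\theta_\epsilon:=g_\epsilon(f)\,df$, a globally smooth (indeed exact) $1$-form on $\mathcal{N}(Z)$ agreeing with $\tfrac{df}{f^{2k}}$ on $\{|f|\ge\epsilon\}$. Define $\alpha_\epsilon:=u\,\theta_\epsilon+\beta$ on $\mathcal{N}(Z)$ and $\alpha_\epsilon:=\alpha$ on $M\setminus\{|f|<\epsilon\}$; these agree on the overlap, so $\alpha_\epsilon$ is a well-defined smooth $1$-form coinciding with $\alpha$ outside an $\epsilon$-neighbourhood of $Z$.

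Next I would check that $\alpha_\epsilon$ is a genuine contact form for $\epsilon$ small. Since $g_\epsilon$ depends only on $f$, one computes $d\alpha_\epsilon=g_\epsilon(f)\,du\wedge df+d\beta$ on $\mathcal{N}(Z)$, the term $u\,g_\epsilon'(f)\,df\wedge df$ vanishing. Because $\beta=\pi^*\tilde\beta$ is pulled back from the $2n$-dimensional manifold $Z$ one has $\beta\wedge(d\beta)^n=0$, and since $(du\wedge df)^2=0$ the top power collapses to
\[
\alpha_\epsilon\wedge(d\alpha_\epsilon)^n\;=\;\pm\,g_\epsilon(f)\;df\wedge\Theta_n, \qquad \Theta_n:=u\,(d\beta)^n+n\,\beta\wedge du\wedge(d\beta)^{n-1}.
\]
The $b$-contact condition for $\alpha$ is exactly that $\Theta_n$ restricts to a volume form on $Z$ (the higher-dimensional counterpart of the computation underlying Proposition \ref{prop:ReebHamdim3}), hence $\Theta_n$ is nowhere zero on $\mathcal{N}(Z)$ after shrinking. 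As $g_\epsilon>0$ everywhere, $\alpha_\epsilon\wedge(d\alpha_\epsilon)^n$ is nowhere zero on $\mathcal{N}(Z)$, and outside $\mathcal{N}(Z)$ it equals $\alpha\wedge(d\alpha)^n\neq0$; thus $\alpha_\epsilon$ is contact. In the presence of lower-order singular terms the same formula holds with $g_\epsilon\Theta_n$ replaced by $g_\epsilon\Theta_n+\sum_j g_{j,\epsilon}\Theta_{j,n}$, still nonvanishing for $\epsilon$ small since the leading coefficient dominates near $Z$ and the lower terms are supported in $\{|f|<\epsilon\}$.

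For the convergence statement, note that $R_{\alpha_\epsilon}$ and $\Lambda_{\alpha_\epsilon}$ are built algebraically from $\alpha_\epsilon$, $d\alpha_\epsilon$ and $(\alpha_\epsilon\wedge(d\alpha_\epsilon)^n)^{-1}$, and outside the $\epsilon$-neighbourhood they coincide with $R_\alpha$, $\Lambda_\alpha$. Solving $\iota_{R_{\alpha_\epsilon}}d\alpha_\epsilon=0$, $\alpha_\epsilon(R_{\alpha_\epsilon})=1$ shows that the only $\epsilon$-dependence of $R_{\alpha_\epsilon}$, and likewise of $\Lambda_{\alpha_\epsilon}$, enters through the smooth function $1/g_\epsilon(f)$, which equals $f^{2k}$ for $|f|\ge\epsilon$; a direct estimate as in \cite{gmw1} gives $\|\,1/g_\epsilon(f)-f^{2k}\,\|_{C^{j}}\to0$ for $j\le 2k-1$ while it stays of order $O(1)$ for $j=2k$, whence $R_{\alpha_\epsilon}\to R_\alpha$ and $\Lambda_{\alpha_\epsilon}\to\Lambda_\alpha$ in the $C^{2k-1}$-topology and no better. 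The conceptual heart of the argument — and the main point requiring care — is the realization that for even $m=2k$ the singular coefficient $f^{-2k}$ can be smoothed while remaining positive, which is precisely what forces $\alpha_\epsilon$ to be honestly contact (for odd $m$ the analogous recipe is obstructed by the sign change of $f^{-m}$, and one obtains only a folded object); the technically delicate part is then the sharp, and unimprovable, $C^{2k-1}$ bound above.
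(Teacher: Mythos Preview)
The present paper does not prove this theorem: it is quoted from \cite{MO} and stated without proof here, so there is no in-paper argument to compare against directly. Your sketch follows the natural route---adapt the Guillemin--Miranda--Weitsman desingularization \cite{gmw1} to the contact setting by replacing the singular coefficient $f^{-2k}$ with a smooth positive profile $g_\epsilon$---and this is consistent with how the paper later \emph{uses} the theorem: in the trap construction of Section~\ref{sec:trap} the desingularized form is written $\alpha_\epsilon=g(u\,df_\epsilon+\beta)$ with $f_\epsilon'$ playing exactly the role of your $g_\epsilon$, and in Section~\ref{sec:singularweinstein} the desingularized Reeb field appears as $R_{\alpha_\epsilon}=g\,\tfrac{1}{f_\epsilon'}\,\partial_z+X$, matching your observation that the only $\epsilon$-dependence enters through $1/g_\epsilon(f)$. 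Your contact-condition check (using $\beta=\pi^*\tilde\beta$ to kill $\beta\wedge(d\beta)^n$) and the $C^{2k-1}$ estimate via a scaling $1/g_\epsilon(t)=\epsilon^{2k}h_1(t/\epsilon)$ are both sound.

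One small notational point worth aligning if you write this up: the paper (and \cite{gmw1}) works with a smoothed \emph{primitive} $f_\epsilon$ so that $df_\epsilon$ replaces $\tfrac{df}{f^{2k}}$, i.e.\ $f_\epsilon'=g_\epsilon$ in your language; since your $g_\epsilon(f)\,df$ is exact this is the same construction, but the primitive formulation makes the link to \cite{gmw1} transparent.
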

	
A similar results holds for $b^{2k+1}$-contact structure, where the resulting desingularization yields the so-called folded contact structures, see \cite{MO}.

\section{Motivating examples}

\subsection{Motivating examples from Celestial Mechanics}

Let us consider the restricted three body problem. This is a simplified version of the general $3$-body problem: one of the bodies has {negligible mass}.
The other two bodies called primaries move independently of it following Kepler's laws for the $2$-body problem.  In the example below we will assume these are circles and we will refer to it as  the  restricted circular three body problem. The planar version assumes that the motion occurs in a plane and we abbreviate it by RPC3BP.

\begin{figure}[hbt!]
	
	\tikzset{>=latex}
	\centering
	\definecolor{xdxdff}{rgb}{0.49019607843,0.4901960784,1.}
	\definecolor{qqqqff}{rgb}{0.1,0.,1.}
	\begin{tikzpicture}[line cap=round,line join=round,x=0.8cm,y=0.8cm]
	\clip(-6,-2.8) rectangle (7,4.5);
	\draw [fill=black,fill opacity=0.1] (-3.78,1.4) circle (1.216050846542cm);
	\draw [fill=black,fill opacity=0.05] (5.96,-1.64) circle (0.820975030076cm);
	\draw [->] (5.96,-1.64) -- (-0.3,3.92);
	\draw [->] (-3.78,1.4) -- (-0.3,3.92);
	\draw (5.96,-1.64)-- (-3.78,1.4);
	\draw [->] (0.09567095224776079,0.1903450005304732) -- (-0.3,3.92);
	\draw[color=black] (-3.7,0.8) node {$m_1 = 1 - \mu$};
	\draw[color=black] (5.9,-1.9) node {$m_2 = \mu$};
	\draw [fill=black] (-0.3,3.92) circle (1.5pt);
	\draw [fill=black] (-3.78,1.4) circle (1.5pt);
	\draw [fill=black] (5.96,-1.64) circle (1.5pt);
	\draw[color=black] (-3.78,1.7) node {$q_1$};
	\draw[color=black] (5.96,-1.3) node {$q_2$};
	\draw[color=black] (-0.16,4.29) node {$q$};
	\draw[color=black] (4.54,0.8) node {$r_2 = q - q_2$};
	\draw[color=black] (-2.54,3.2) node {$r_1 = q - q_1 $};
	\draw [fill=black] (0.09567095224776079,0.1903450005304732) circle (1.5pt);
	\draw[color=black] (-0.3,-0.3) node {$\text{center of mass}$};
	\draw[color=black] (0.2,1.8) node {$r$};
	\end{tikzpicture}
	\caption{The restricted planar three-body problem: Sun-Earth-Moon system.}
\end{figure}
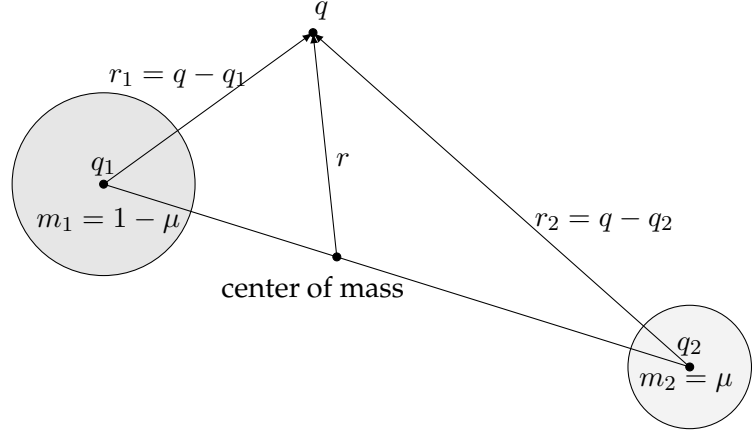

Singular symplectic structures show up naturally in examples in celestial mechanics as an offspring of regularization transformations (see for instance \cite{knauf2} and references there-in, and \cite{knauf1, gerardknauf}). In the case of the restricted $3$-body problem, these regularization transformations are due to McGehee. See also \cite{kms, dkm, BDMOP} for  examples in celestial mechanics where singular symplectic structures are analyzed in detail.

We now follow \cite{kms} and \cite{DKRS} for the description of the singular symplectic geometry of the RPC3BP.

\begin{itemize}

\item The time-dependent self-potential of the small body  is
{$U(q,t)= \frac{1-\mu}{|q-q_1(t)|} + \frac{\mu}{|q-q_2(t)|},$}
with  $q_1(t),q_2(t)$ the position of the planet with mass $1-\mu$, respectively $\mu$.

\item The Hamiltonian of the system is
\begin{equation}\label{eq:Hamiltonian3BP}
H(q,p,t)= \frac{|p|^2}{2} - U(q,t), (q,p) \in \mathbb R^2\setminus \{q_1(t),q_2(t)\} \times \mathbb R^2,
\end{equation}
where $p=\dot q$ is the momentum of the planet.

\item Passing to rotating coordinates, the positions of the two planets can be assumed to be fixed at $q_1=(\mu,0)$, respectively $q_2=(-(1-\mu),0)$. The resulting Hamiltonian is autonomous but therefore ceases to be the sum of kinetic and potential energy, see \cite{frauenfelderkoert,abrahammarsden}. It is given by
\begin{equation}\label{eq:HamiltonianPR3BP}
H(q,p)=\frac{|p|^2}{2} -\frac{1-\mu}{|q-q_1|} - \frac{\mu}{|q-q_2|}+p_1q_2-p_2q_1.
\end{equation}

\item We then consider the symplectic change of coordinates to polar coordinates given by $(q, p) \mapsto (r,\alpha, P_r, P_\alpha)$, where $q=(r \cos \alpha,r\sin \alpha)$ and $p=(P_r\cos \alpha-\frac{P_\alpha}{r}\sin \alpha,P_r\sin \alpha+\frac{P_\alpha}{r}\cos \alpha)$.

\item We then introduce the McGehee coordinates
$(x,\alpha,P_r,P_\alpha)$, where {$ r=\frac{2}{x^2}, x \in \mathbb R^{+}$}.

\item The geometric structure is a singular form given by
$-\frac{4}{x^3} dx \wedge dP_r + d\alpha \wedge d P_\alpha$
which is symplectic away from the line at infinity and extends to a singular symplectic form (technically called $b^3$-symplectic structure)  on $\mathbb{R}^+ \times \mathbb S^1 \times \mathbb R^2$.

\end{itemize}

As it is customary in the classical theory of symplectic and contact geometry, the restriction of the symplectic form on regular level-sets of $H$ induces a contact structure whenever there exists a \emph{Liouville vector field} that is transverse to it.
In this new picture this contact structure may have singularities.

In \cite{albers} the authors apply results from contact topology to prove existence of periodic orbits in the RPC3BP using a regularization introduced by Moser. The study of the topology of the problem strongly depends on the geography of the Lagrangian points, which are the critical points of the Hamiltonian, depicted below in Figure \ref{fig:lagrangepoints}.

\begin{center}
\begin{figure}[hbt!]
\begin{tikzpicture}[scale=0.8]

   \draw (0,0,0) circle(2.7);
   \draw[fill=black!30] (0,0,0) circle(0.4);
   \draw[fill=black!30] (2.7,0,0) circle (0.2);
   \draw (2.7,0,0) circle (0.5);
   \draw[fill=black!30] (3,0.38,0) circle (0.1);

   \draw (-2.7,0,0) node[circle,fill,scale=0.4]{};
   \draw (1.1,2.46,0) node[circle,fill,scale=0.4]{};
   \draw (1.1,-2.46,0) node[circle,fill,scale=0.4]{};
   \draw (1.8,0,0) node[circle,fill,scale=0.4]{};
   \draw (3.6,0,0) node[circle,fill,scale=0.4]{};

   \draw (-3.1,0,0) node{$L_3$};
   \draw (1.2,2.7,0) node{$L_4$};
   \draw (1.2,-2.8,0) node{$L_5$};
   \draw (1.8,-0.4,0) node{$L_1$};
   \draw (3.6,-0.4,0) node{$L_2$};

   \end{tikzpicture}
   \caption{Lagrangian points}
   \label{fig:lagrangepoints}
   \end{figure}
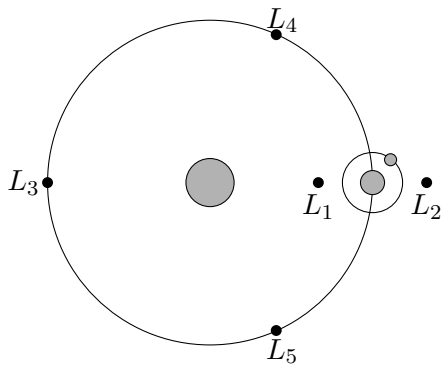
\end{center}

Following \cite{albers}, for low energy levels $c \in \mathbb{R}$ of the Hamiltonian, the level-set $\Sigma_c=H^{-1}(c)$ has 3 connected components. The component where the position of the satellite is bounded in position around the earth (respectiely moon) is denoted by $\Sigma_c^E$ (respectively $\Sigma_c^M$). The third component corresponds to the case in which the satellite is far away in position from the primaries.

  The first Lagrangian point $L_1$ lies on the axis between earth and moon and it is a critical point of the
energy.  As discussed in \cite{albers} if $c=H(L_1)+\epsilon$ (for $\epsilon$ sufficiently small) the satellite can cross from the region around the earth to the region around the
moon and in this case there are two connected components, one bounded (which we denote by $\Sigma_c^{E,M}$) and one unbounded.

 To deal with the singularities of the Kepler problem, Moser \cite{moser} introduced a regularization procedure for $2$-body collisions.  This can be applied to the planar circular restricted $3$-body problem. Using Moser's regularization the components $\Sigma_c^E$ and $\Sigma^M_c$ can be compactified to $\overline{\Sigma}_c^E$ and $\overline{\Sigma}_c^M$  which are diffeomorphic to the real projective space $\mathbb{R}P^3$ and {$\overline{\Sigma}_c^{E,M}$} is
diffeomorphic to {$\mathbb{R}P^3 \# \mathbb{R}P^3$.

\begin{theorem}[\cite{albers}]\label{thm:albers}
For $c<H(L_1)$ the contact structures
$\big(\overline{\Sigma}_c^E,\ker \alpha\big)$ and
$\big(\overline{\Sigma}_c^M,\ker \alpha\big)$ coincide with the
{ standard contact structure on $\mathbb{R}P^3$ which is tight} and for $c \in (H(L_1),H(L_1)+\epsilon)$ the
contact structure $\big(\overline{\Sigma}_c^{E,M},\ker \alpha\big)$
coincides with the tight $\mathbb{R}P^3\#\mathbb{R}P^3$.
\end{theorem}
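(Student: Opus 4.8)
The plan is to deduce the statement (following \cite{albers}) from Moser regularization together with the classification of tight contact structures on $\mathbb{R}P^3$ and $\mathbb{R}P^3\#\mathbb{R}P^3$. First I would fix an energy value $c<H(L_1)$ and concentrate on the bounded component $\Sigma_c^E$ enclosing the earth; the analysis of $\Sigma_c^M$ is identical after exchanging the two primaries. Shifting the Hamiltonian by $-c$ and applying the Levi-Civita/Moser substitution blows up the collision locus $\{q=q_1\}$: after passing to the momentum variables and reparametrizing the Hamiltonian flow along the shifted zero level, the flow extends smoothly over the collision, and one reads off that the closure $\overline{\Sigma}_c^E$ is diffeomorphic to the unit cotangent bundle $ST^*S^2\cong\mathbb{R}P^3$. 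The key point of this step is that $\overline{\Sigma}_c^E$ is realized inside $T^*S^2$ as the boundary of a fiberwise starshaped domain $D_c^E$, and that the Liouville one-form $\alpha$ of the problem corresponds under the regularization to (a positive multiple of) the restriction of the canonical one-form of $T^*S^2$.

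With this picture in hand the contact structure is easy to pin down. Since $D_c^E$ is fiberwise starshaped, the canonical Liouville vector field of $T^*S^2$ (the radial field in the fibers) is transverse to $\partial D_c^E$, so $\alpha$ is a contact form and $D_c^E$ is an exact symplectic filling of $(\overline{\Sigma}_c^E,\ker\alpha)$; by the Gromov--Eliashberg theorem the contact structure is therefore tight. Honda's classification of tight contact structures on $\mathbb{R}P^3$ (there is a unique one up to isotopy, and it is the standard one, which is itself tight since $ST^*S^2$ is Stein fillable by $D^*S^2$) then forces $(\overline{\Sigma}_c^E,\ker\alpha)$ and $(\overline{\Sigma}_c^M,\ker\alpha)$ to be the standard tight contact structure on $\mathbb{R}P^3$.

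For the range $c\in(H(L_1),H(L_1)+\epsilon)$ the two bounded pieces merge across a neighbourhood of the hyperbolic critical point $L_1$, which lies away from the collision loci and hence requires no regularization. Crossing the critical value $H(L_1)$ corresponds to attaching an index-one handle, which on the level of the energy hypersurface performs a connected sum of the two components, giving $\overline{\Sigma}_c^{E,M}\cong\mathbb{R}P^3\#\mathbb{R}P^3$. Localizing the construction near $L_1$ I would arrange this to be a \emph{contact} connected sum of the two tight copies of $\mathbb{R}P^3$ along standard Darboux balls; by Colin's theorem the contact connected sum of tight contact $3$-manifolds is again tight, and combining this with uniqueness on each factor shows that $\mathbb{R}P^3\#\mathbb{R}P^3$ carries a unique tight contact structure. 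Hence $(\overline{\Sigma}_c^{E,M},\ker\alpha)$ is the standard tight contact structure on $\mathbb{R}P^3\#\mathbb{R}P^3$, as claimed.

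The hard part, and the step I would expect to be the main obstacle, is making the regularization uniform in the energy: one must check that the Moser-regularized hypersurface $\overline{\Sigma}_c^E$ remains fiberwise starshaped --- equivalently, of contact type --- for \emph{every} $c<H(L_1)$, and not merely in the asymptotic regime $c\to-\infty$, where the problem degenerates to the rotating Kepler problem and convexity is essentially built in. The Coriolis term $p_1q_2-p_2q_1$ in the rotating Hamiltonian \eqref{eq:HamiltonianPR3BP} destroys the naive convexity of the Kepler problem, so one has to produce an explicit Liouville vector field near $\Sigma_c$ and verify its transversality by hand over the whole subcritical interval, together with the fact that the regularized family stays smooth up to (but not including) the first critical value. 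Everything else --- the diffeomorphism type, the existence of the filling, and the contact connected sum description near $L_1$ --- is then comparatively formal, relying on the low-dimensional contact topology inputs cited above.
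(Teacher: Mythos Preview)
The paper does not contain its own proof of this theorem: it is quoted verbatim from \cite{albers} as background for the discussion of the restricted three-body problem, and no argument is given beyond the citation. There is therefore nothing in the present paper to compare your proposal against.

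That said, your outline is a faithful sketch of the strategy in \cite{albers}, and you have correctly identified the main obstacle: the nontrivial content is precisely the verification that the Moser-regularized hypersurface remains fiberwise starshaped (equivalently, of restricted contact type) for \emph{all} subcritical energies, not just asymptotically. One remark on efficiency: once fiberwise starshapedness in $T^*S^2$ is established, the identification with the standard contact structure on $ST^*S^2\cong\mathbb{R}P^3$ follows directly by fiberwise radial rescaling, without any appeal to Honda's classification or to tightness via fillability; the latter are correct but unnecessary. Similarly, in the connected-sum range the authors of \cite{albers} argue more directly via the Weinstein handle picture near $L_1$, so that the resulting structure is \emph{by construction} the contact connected sum of the two standard copies; Colin's theorem and a uniqueness argument on the connected sum are again valid but heavier than what is actually needed.
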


A relevant outcome of this theorem is the application of contact topology results to the actual problems in celestial mechanics. More concretely, the celebrated Weinstein conjecture (proved in dimension $3$ by Taubes \cite{taubes}) claims that {the Reeb vector field of a contact compact manifold admits at least one periodic orbit}.

The combination of Weinstein conjecture with Theorem \ref{thm:albers} yields,

\begin{theorem}[\cite{albers}] For any value {$c<H(L_{1})$}, the regularized planar circular restricted
three body problem has a closed orbit with energy $c$.
\end{theorem}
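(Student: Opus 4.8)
The plan is to combine Theorem~\ref{thm:albers} with the three-dimensional Weinstein conjecture. First I would recall the standard dictionary between Hamiltonian dynamics on a hypersurface of contact type and Reeb dynamics: if $\Sigma \subset (W,\omega)$ is a regular energy level of $H$ and $X$ is a Liouville vector field defined near $\Sigma$ and transverse to it, then $\alpha := (\iota_X\omega)|_\Sigma$ is a contact form on $\Sigma$ whose Reeb vector field $R_\alpha$ is a positive reparametrization of the Hamiltonian vector field $X_H|_\Sigma$. In particular, closed Reeb orbits of $\alpha$ are in bijection, as unparametrized curves, with periodic orbits of the Hamiltonian flow on $\Sigma$.

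Next, fix $c<H(L_1)$ and pass to Moser's regularization. By the discussion preceding Theorem~\ref{thm:albers}, the bounded components of the regularized level set, $\overline{\Sigma}_c^E$ and $\overline{\Sigma}_c^M$, are closed three-manifolds diffeomorphic to $\mathbb{R}P^3$; and by Theorem~\ref{thm:albers} each carries the tight contact structure $\ker\alpha$ induced, as above, by a Liouville field transverse to it, the associated Reeb flow being a reparametrization of the regularized Hamiltonian flow extending the RPC3BP dynamics. Since $\overline{\Sigma}_c^E$ is a compact co-oriented contact manifold of dimension $3$, the Weinstein conjecture in dimension $3$, proved by Taubes~\cite{taubes}, yields at least one closed orbit $\gamma$ of $R_\alpha$ on it.

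Finally I would transport $\gamma$ back to the original problem: by the first step $\gamma$ is, after reparametrization, a periodic orbit of the regularized Hamiltonian flow lying on the energy level $c$. If $\gamma$ avoids the collision locus added by the regularization, it is literally a closed orbit of the RPC3BP with energy $c$; if it meets that locus it is a regularized ejection--collision orbit, which is again an admissible closed orbit at energy $c$. (One may run the same argument verbatim on $\overline{\Sigma}_c^M$.)

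I do not expect a genuine obstacle here: the two substantive inputs — that the compactified level set is a closed contact $\mathbb{R}P^3$ (Theorem~\ref{thm:albers}) and the three-dimensional Weinstein conjecture (\cite{taubes}) — are quoted, so the only remaining work is the routine verification that the Reeb reparametrization is compatible with Moser's regularization, i.e. that the closed Reeb orbit descends to an honest periodic solution of the regularized equations of motion at the prescribed energy. The only point requiring a little care is that one must invoke the theorem on $\overline{\Sigma}_c^{E}$ or $\overline{\Sigma}_c^{M}$ rather than the unbounded third component, where compactness (and hence the applicability of the Weinstein conjecture) fails.
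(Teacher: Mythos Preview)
Your proposal is correct and matches the paper's approach: the paper simply says that the combination of Theorem~\ref{thm:albers} with the three-dimensional Weinstein conjecture (Taubes) yields the result, and your write-up spells out exactly this argument, including the standard identification of the Reeb flow with a reparametrization of the Hamiltonian flow on a contact-type hypersurface.
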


 The results of \cite{albers} identify the contact topology on the level-sets of $H$ and prove existence of periodic orbits but do not localize these orbits with respect to the line at infinity. This is mainly due to the fact that the Moser's regularization and compactification gets rid of this singular set. However, this is not the case of McGehee regularization where the line at infinity is identified with the singular set of the $b^3$-symplectic structure. For astrodynamical purposes  it is convenient to be able to understand periodic orbits getting close to the line at infinity and periodic orbits at the line at infinity. Considering the new $b^3$-symplectic model described above, one can raise the following question:

\begin{Question}
  Are the dynamics on the level-sets of the Hamiltonian described by the flow of a Reeb vector field associated to a singular contact form? Can we use new methods in singular contact geometry to localize these periodic orbits with respect to the line at infinity?
\end{Question}

More generally, to understand the dynamical behavior of those manifolds, it is natural to ask about the generalization of Weinstein conjecture in this setting as this is, by the above considerations, strongly related to the existence of periodic orbits in the RPC3BP.

\begin{Question}
{Does the Weinstein conjecture hold in this singular set-up?}
\end{Question}

\subsection{Motivating examples from Fluid Dynamics}\label{subsec:fluiddynamics}

Euler equations model the dynamics of an inviscid and incompressible fluid flow. Their viscid counterpart yield the Navier-Stokes equations.

Euler equations can be generalized from the Euclidean to the general Riemannian case as follows: On a Riemannian $3$-manifold $(M^3,g)$ they can be described by
\begin{align*}
{ \frac{\partial X}{\partial t} + \nabla_X X} &= - {\nabla P} \\
	{ \operatorname{div}X}&={0}
\end{align*}
where $X$ is the velocity, $\nabla$ the Riemannian gradient and $P$ the pressure. The Bernoulli function is given   by  $B= P + \frac{1}{2}g(X,X)$. We can take advantage of the metric $g$ to identify several classical concepts using Riemannian duality as follows: The vorticity vector $\omega$
is defined as $$  \iota_\omega \mu= d\alpha $$ where $\alpha= \iota_X g$ and $\mu$ the Riemannian volume.

 Even though, the classical work in the subject uses the language of vector calculus. In this article we adopt the language of forms. For more information about the subject please consult \cite{peraltasalas2}.
%
%

When the flow does not depend on time we obtain the so-called \emph{stationary solutions}. 
 In terms of $\alpha = \iota_X g$,  stationary Euler equations can be written as,
$$
  \left\{
    \begin{array}{l}
      \iota_X d\alpha = -dB\\
     d\iota_X\mu = 0.
    \end{array}
  \right.
$$

An important class of stationary solutions are given by Beltrami fields which satisfy
$$ \operatorname{curl}X=fX, \text{ with } f \in C^\infty (M). $$
When $f\neq0$, we call those vector field \emph{rotational}.
Among the examples of Beltrami fields we find Hopf fields and ABC flows.

If $X$ is non-vanishing rotational Beltrami then {$\alpha= \iota_X g$} is a contact structure. In order to prove this note that
the Beltrami equation in the language of forms described above can be written as $d\alpha = f \iota_X \mu $.
Since $f$ is strictly positive and $X$ is not vanishing  we obtain: {$ \alpha \wedge d\alpha= f \alpha \wedge \iota_X \mu > 0.$} Thus proving that $\alpha$ is  a contact structure.

Further, the vector field
 $X$ satisfies {$\iota_X (d\alpha)= \iota_X \iota_X \mu=0$} so $X \in \ker d\alpha$. This implies that it is a reparametrization of the Reeb vector field by the function $\alpha(X)= g(X,X)$.

This proves one of the implications of  the theorem below proved  Etnyre and Ghrist \cite{EG}:

\begin{theorem}\label{correspondence}
Any nonsingular rotational Beltrami field is a reparametrization of a Reeb vector field for some contact form and conversely any reparametrization of a Reeb vector field of a contact structure is a nonsingular rotational Beltrami field for some metric and volume form.
\end{theorem}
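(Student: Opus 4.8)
The implication that a nonsingular rotational Beltrami field is a positive reparametrization of a Reeb field is the one already discussed above: with $\alpha=\iota_X g$, the Beltrami condition reads $d\alpha=f\,\iota_X\mu$ with $f$ nowhere zero, so $\alpha\wedge d\alpha=f\,\alpha\wedge\iota_X\mu\neq 0$ and $\iota_X d\alpha=f\,\iota_X\iota_X\mu=0$; since $\ker d\alpha$ is spanned by the Reeb field $R$ on a contact $3$-manifold, one gets $X=\alpha(X)\,R=g(X,X)\,R$ with $g(X,X)>0$, a positive reparametrization. So the plan is to establish the converse.

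For the converse, let $\alpha$ be a contact form on $M^3$ with Reeb field $R$, and let $X=hR$ for an arbitrary $h\in C^\infty(M)$ with $h>0$; the task is to produce a metric $g$ and a volume form $\mu$ for which $X$ is a nonsingular rotational Beltrami field. Since $\alpha\wedge d\alpha$ orients $M$, set
\[
  \mu:=\frac{1}{h}\,\alpha\wedge d\alpha .
\]
Then build $g$ from the splitting $TM=\mathbb R R\oplus\xi$ with $\xi=\ker\alpha$: pick an almost complex structure $J$ on the symplectic plane bundle $(\xi,d\alpha|_\xi)$ compatible with $d\alpha|_\xi$ (such $J$ exist and form a contractible set), declare $X$ to be $g$-orthogonal to $\xi$ with $g(X,X):=h$, and set $g|_\xi:=\frac{1}{\sqrt h}\,d\alpha(\cdot,J\cdot)$.

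Two identities then need checking, both by a short computation in a local $g$-orthonormal coframe in which the covector $g$-dual to $X/\sqrt h$ is $\alpha/\sqrt h$. First, $\iota_X g=\alpha$, which is immediate from $g(X,X)=h=\alpha(X)$ and $X\perp_g\xi=\ker\alpha$. Second — and this is the point of the factor $\frac{1}{\sqrt h}$ — the Riemannian volume of $g$ equals $\frac{1}{\sqrt h}\,\alpha\wedge(\text{area form of }g|_\xi)=\frac{1}{\sqrt h}\,\alpha\wedge\frac{1}{\sqrt h}\,d\alpha=\mu$, using that the area form of a metric of the form $\omega(\cdot,J\cdot)$ is $\omega$, and that a $2$-form annihilated by $X$ is determined by its restriction to $\xi$. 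Finally, with $\iota_R\alpha=1$ and $\iota_R d\alpha=0$,
\[
  \iota_X\mu=\iota_{hR}\Bigl(\frac{1}{h}\,\alpha\wedge d\alpha\Bigr)=\iota_R(\alpha\wedge d\alpha)=d\alpha=d(\iota_X g),
\]
so by nondegeneracy of $\mu$ the vorticity $\operatorname{curl}X$, defined by $\iota_{\operatorname{curl}X}\mu=d(\iota_X g)$, equals $X$; in particular $\operatorname{curl}X=1\cdot X$ with nowhere-vanishing proportionality factor, so $X$ is rotational, and $d\,\iota_X\mu=d\,d\alpha=0$ gives $\operatorname{div}_\mu X=0$. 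Hence $X$ is a nonsingular rotational Beltrami field for $(g,\mu)$.

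The only delicate step is the joint choice of $\mu$ and of the conformal rescaling of $g|_\xi$: the divergence-free requirement forces $\mu$ (up to an $R$-invariant factor) to be $\frac{1}{h}\alpha\wedge d\alpha$, and then $g$ must be chosen so that its Riemannian volume is exactly this $\mu$ — an arbitrary metric realizing $\iota_X g=\alpha$ would in general make $\operatorname{div}X\neq 0$. Everything else — contactness forcing the vorticity-to-field ratio to be nowhere zero, orientability of contact $3$-manifolds, and the existence and contractibility of the space of compatible $J$'s — is standard.
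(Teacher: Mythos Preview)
Your forward direction (Beltrami $\Rightarrow$ Reeb reparametrization) is exactly the argument the paper gives in the paragraph preceding the theorem statement. For the converse, the paper does not supply a proof at all: it simply attributes the full statement to Etnyre--Ghrist \cite{EG}. What you have written is therefore strictly more than what the paper contains.

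Your construction for the converse is correct and is essentially the standard one from \cite{EG}: split $TM=\mathbb R R\oplus\xi$, pick a compatible $J$ on $(\xi,d\alpha|_\xi)$, and tune the conformal factor on $\xi$ together with the normalization $g(X,X)=h$ so that the Riemannian volume becomes $\tfrac{1}{h}\alpha\wedge d\alpha$. The verifications $\iota_X g=\alpha$, $\mathrm{vol}_g=\mu$, $\iota_X\mu=d\alpha$ (hence $\operatorname{curl}X=X$), and $d\iota_X\mu=0$ are all sound; your remark that the rescaling by $1/\sqrt h$ is precisely what matches $\mathrm{vol}_g$ to $\mu$ is the key observation, and your orthonormal-coframe check confirms it. One cosmetic point: you tacitly assume $h>0$, which is harmless since a negative reparametrization is handled by replacing $\alpha$ with $-\alpha$.
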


The geometrical approach to hydrodynamics probably started with Arnold \cite{Ar65} (see also the joint book with Khesin \cite{{AK}}).
In \cite{EP1, EP2, enciso} the authors exploit the geometrical flavour of stationary solutions to the Euler equations to study knots, links and vortex tubes in this context solving in particular a conjecture of Lord Kelvin in \cite{EP2}.

In \cite{CMP}  contact manifolds with boundary having a singular contact structure on the boundary of \emph{$b$-type} are identified with contact manifolds with boundary where the boundary is pushed to infinity (or manifolds with cylindrical ends). Using this identification in \cite{CMP} it is proved that the correspondence contact/Beltrami can be  extended   to the singular set up thus extending the previous geometrical picture on Beltrami fields to $3$-dimensional manifolds with boundary.

The  correspondence between non-singular Beltrami fields and regular contact structures holds in any odd dimension (see \cite{CMPP} for a proof) and similarly higher dimensional singular contact structures naturally arise associated to Euler's equations. In \cite{CMPP} the authors prove universality properties for Euler's equations via a Reeb embedding theorem which we prove thanks to refining the  $h$-principle techniques in \cite{bem}. Extending those techniques to the singular realm would allow to understand if such universality properties still hold on manifolds with cylindrical ends.

	 \section{Motivating examples revisited}\label{sec:motivatingrevisited}
	
	 In this section, we revisit the motivating examples and use the view-point of $b$-contact geometry as explained in the preliminaries to deepen the understanding of the dynamics in both the RPC3BP and Beltrami vector fields. Those examples make apparent the importance of an exhaustive knowledge of the Reeb dynamics for $b^m$-contact manifolds.
	
	 \subsection{Infinitely many periodic orbits on the manifold at infinity in the RPC3BP}\label{sec:cr3bp}

We will apply the results on the dynamics of the $b^m$-Reeb flow already introduced in Proposition \ref{prop:ReebHamdim3} and \ref{prop:unonconstant3dim} to prove that there exists infinitely many periodic Reeb orbits on the manifold at infinity.

It was proved in \cite{dkm} (see also \cite{BDMOP}) that the underlying geometric structure in the restricted three body problem after the McGehee change of coordinates is a $b^3$-symplectic structure. In this section, we focus on level-sets of the Hamiltonian. In particular, we study the motion of the satellite at infinity and therefore assume that $H=c>0$. We will see that in this case the dynamics are described by the Reeb flow of a $b^3$-contact form and that there are infinitely many periodic Reeb orbits on the manifold at infinity.

Our approach contrasts the one given in \cite{albers}, where the authors use Moser's regularization (see also \cite{knauf2}) to show that the level-sets can be regularized to compact contact manifolds. In our case, the existence of periodic orbits on the critical set follows from the observation that the Reeb vector field in dimension $3$ is a Hamiltonian vector field on the critical set, see Proposition \ref{prop:ReebHamdim3}. First, let us pass to polar coordinates $(r,\alpha,P_r,P_\alpha)$ through a symplectic change of coordinates.

This change of coordinates is given by $q=(r\cos \alpha, r\sin \alpha)$ and $p=(P_r\cos \alpha-\frac{P_\alpha}{r}\sin \alpha,P_r\sin \alpha+\frac{P_\alpha}{r}\cos \alpha)$ and the symplectic form is given by $\omega=\sum_{i=1}^2 dq_i\wedge dp_i= dr\wedge dP_r +d\alpha\wedge dP_\alpha$ and we then perform the McGehee change of coordinates, given by
\begin{equation}\label{eq:McGehee}
    r=\frac{2}{x^2}.
\end{equation}

The symplectic form then gives rise to a $b^3$-symplectic form which can be written as:

\begin{equation}\label{eq:b3symplecticform}
-\frac{dx}{x^3}\wedge dy +d\alpha \wedge dG.
\end{equation}

We now look at the level-sets of $H$ under those coordinate changes. In contrast to \cite{albers}, as the McGehee change of coordinate exchanges infinity with the origin, we consider the level-sets $\Sigma_c$ such that $\pi(\Sigma_c)$ is unbounded: indeed, we will only consider $c>0$. Furthermore, we do not consider the Liouville vector field in the position coordinates, that is $X=(q-q_M)\frac{\partial }{\partial q}$, but the one given by momenta. The reason for this is that $X$ is not a $b^3$-vector field and therefore the contraction $\iota_X \omega$ does not give rise to a $b^3$-form.

We first check that the Liouville vector field in  momenta is transverse to the positive energy level-sets before doing the McGehee change of coordinates.

\begin{lemma}
The vector field $Y=p\frac{\partial}{\partial p}$ is a Liouville vector field and is transverse to $\Sigma_c$ for $c>0$.
\end{lemma}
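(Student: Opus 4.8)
The plan is to verify both assertions by a short direct computation in the Cartesian coordinates $(q,p)\in(\R^2\setminus\{q_1,q_2\})\times\R^2$, where $\omega=\sum_{i=1}^2 dq_i\wedge dp_i$; since the passage to polar coordinates is symplectic, nothing is lost by working in this chart, and the lemma concerns the picture \emph{before} the McGehee substitution.

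First I would check the Liouville property. As $\omega$ is closed, $\mathcal L_Y\omega=d\iota_Y\omega$, so it suffices to see that $\iota_Y\omega$ is a primitive of $\omega$. Writing $Y=\sum_i p_i\,\partial_{p_i}$, one computes $\iota_Y\omega=-\sum_i p_i\,dq_i$, hence $d\iota_Y\omega=-\sum_i dp_i\wedge dq_i=\sum_i dq_i\wedge dp_i=\omega$. Thus $Y$ is a global Liouville vector field for $\omega$, and this is unaffected by the change to polar coordinates.

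For transversality, recall that $Y$ is transverse to the level set $\Sigma_c$ at a point $x$ exactly when $dH_x(Y_x)=Y(H)(x)\neq 0$; so I would show that $Y(H)>0$ on $\Sigma_c$ whenever $c>0$, which incidentally re-proves that every such $c$ is a regular value of $H$. Differentiating \eqref{eq:HamiltonianPR3BP} gives $\partial_{p_1}H=p_1+q_2$ and $\partial_{p_2}H=p_2-q_1$ — the self-potential $U(q)=\tfrac{1-\mu}{|q-q_1|}+\tfrac{\mu}{|q-q_2|}$ does not involve $p$, and the Coriolis term $p_1q_2-p_2q_1$ is linear in $p$ — so that
\[
Y(H)=p_1(p_1+q_2)+p_2(p_2-q_1)=|p|^2+(p_1q_2-p_2q_1).
\]
The key step is then to feed in the constraint $H=c$: since $H=\tfrac{|p|^2}{2}-U(q)+(p_1q_2-p_2q_1)$, on $\Sigma_c$ one has $p_1q_2-p_2q_1=c-\tfrac{|p|^2}{2}+U(q)$, and substituting yields
\[
Y(H)\big|_{\Sigma_c}=\tfrac{|p|^2}{2}+c+U(q),
\]
which is strictly positive (indeed bounded below by $c$) because $U>0$ on $\R^2\setminus\{q_1,q_2\}$ and $c>0$. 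Hence $Y$ is nowhere tangent to $\Sigma_c$, i.e. transverse, for every $c>0$.

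I do not expect a genuine obstacle here: the computation is elementary and self-contained. The only point worth stressing is conceptual — the choice of $Y$. One must take the Liouville field in the momenta rather than the radial field $(q-q_M)\,\partial_q$ in positions, both because the latter fails transversality in general and, more importantly for the rest of the section, because $Y$ — unlike $(q-q_M)\,\partial_q$ — remains a $b^3$-vector field under the McGehee substitution $r=2/x^2$, so that contracting it into the $b^3$-symplectic form \eqref{eq:b3symplecticform} produces a genuine $b^3$-one-form, which is exactly what is needed to describe the dynamics on $\Sigma_c$ as a $b^3$-Reeb flow.
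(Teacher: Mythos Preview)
Your proof is correct and follows essentially the same route as the paper: compute $Y(H)=|p|^2+(p_1q_2-p_2q_1)$, use the energy constraint to rewrite this as $\tfrac{|p|^2}{2}+U(q)+c$, and observe that every term is positive for $c>0$. Your argument is slightly more explicit (you spell out $\iota_Y\omega$ and the partials of $H$), but the substance is identical.
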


\begin{proof}
The vector field $Y$ is a Liouville vector field as $\mathcal{L}_Y(\sum_{i=1}^2 dp_i \wedge dq_i)=\omega$ and is transverse to $\Sigma_c$ for $c>0$. Indeed

$$ Y(H)=|p|^2+p_1q_2-p_2q_1=\frac{|p|^2}{2}+\frac{1-\mu}{|q-q_E|}+\frac{\mu}{|q-q_M|}+H(q,p).
$$
Hence $Y(H)|_{H=c}=\frac{|p|^2}{2}+\frac{1-\mu}{|q-E|}+\frac{\mu}{|q-M|}+c$ which is a sum of positive terms when $c>0$.
\end{proof}

We now prove that the vector field $Y$ is also transverse to the level-sets of the Hamiltonian at infinity. The strategy of this is to do the McGehee change of coordinates and check if the vector field is still transverse to the level-set of the Hamiltonian.

\begin{theorem}\label{thm:bcontact3bp}
After the McGehee change, the Liouville vector field $Y=p\frac{\partial}{\partial p}$ is a $b^3$-vector field that is everywhere transverse to $\Sigma_c$ for $c>0$ and the level-sets $(\Sigma_c,\iota_Y \omega)$ for $c>0$ are $b^3$-contact manifolds. Topologically, the critical set is a cylinder and the Reeb vector field admits infinitely many non-trivial periodic orbits on the critical set.
\end{theorem}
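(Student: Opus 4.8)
The plan is to run the McGehee change explicitly and then check that nothing in the $b$-calculus degenerates as $x\to 0$. First I would rewrite the momentum Liouville field $Y=p\,\frac{\partial}{\partial p}$ in the coordinates $(x,\alpha,y,G)$ of \eqref{eq:b3symplecticform} (with $y=P_r$, $G=P_\alpha$). Its flow multiplies the momenta by $e^t$ and fixes the positions, hence fixes $r$ (so $x$) and $\alpha$ while scaling $P_r$ and $P_\alpha$; therefore
\[
Y=y\,\frac{\partial}{\partial y}+G\,\frac{\partial}{\partial G}
\]
in these coordinates. In particular $Y(x)=0$, so $\iota_Y\omega$ is again a $b^3$-form --- that is, $Y$ is a $b^3$-vector field --- whereas this is exactly what fails for the position Liouville field $(q-q_M)\frac{\partial}{\partial q}$, whose $\partial_x$-component is $\approx-\tfrac12 x$, divisible by $x$ but not by $x^3$, so that its contraction with $\omega$ is not a $b^3$-form. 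A one-line computation gives $\iota_Y\omega=y\,\frac{dx}{x^3}-G\,d\alpha$.

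Next I would check transversality, the delicate point being the locus $x=0$. In McGehee coordinates $H=\tfrac12 y^2+\tfrac{x^4}{8}G^2-U(x,\alpha)-G$, where the self-potential $U=\tfrac{1-\mu}{|q-q_1|}+\tfrac{\mu}{|q-q_2|}$ extends real-analytically across $x=0$ (the satellite is far from both primaries there), with $U|_{x=0}=0$ and $U>0$ for $x>0$. Then $Y(H)=y^2+\tfrac{x^4}{4}G^2-G$, and substituting the identity $G=\tfrac12 y^2+\tfrac{x^4}{8}G^2-U-c$ valid on $\Sigma_c$ yields
\[
Y(H)\big|_{\Sigma_c}=\tfrac12 y^2+\tfrac{x^4}{8}G^2+U+c\geq c>0 .
\]
Hence $Y$ is transverse to $\Sigma_c$ everywhere, including along $\{x=0\}$; in particular $dH\neq0$ on $\Sigma_c$, so $\Sigma_c$ is a smooth hypersurface, and since at $x=0$ the $dG$-component of $dH$ equals $-1$, $\Sigma_c$ is transverse to $\{x=0\}$ as well. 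As $\omega$ is $b^3$-symplectic and $Y$ is a Liouville $b^3$-vector field transverse to $\Sigma_c$, the restriction $\iota_Y\omega|_{\Sigma_c}$ is a $b^3$-contact form --- the $b^m$-version, recalled in Section~\ref{b-symp}, of the classical fact that a transverse Liouville field cuts out a contact hypersurface.

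For the topology and the orbits: $Z_c:=\Sigma_c\cap\{x=0\}=\{x=0,\ G=\tfrac12 y^2-c\}$ is a graph over $(\alpha,y)\in\mathbb S^1\times\mathbb R$, hence a cylinder. Near $Z_c$ one can solve $H=c$ for $G=G(x,\alpha,y)$ (legitimate since $\partial_G H=\tfrac{x^4}{4}G-1$ is nonzero near $x=0$), so $(x,\alpha,y)$ are coordinates on $\Sigma_c$ and $\iota_Y\omega|_{\Sigma_c}=y\,\frac{dx}{x^3}-G\,d\alpha$ there. Solving $\iota_R d(\iota_Y\omega)=0$, $(\iota_Y\omega)(R)=1$ gives
\[
R=\frac{1}{y\,G_y-G}\Bigl(x^3 G_y\,\frac{\partial}{\partial x}+\frac{\partial}{\partial\alpha}-x^3 G_x\,\frac{\partial}{\partial y}\Bigr),
\]
which is tangent to $\{x=0\}$ and, using $G|_{x=0}=\tfrac12 y^2-c$ and $G_y|_{x=0}=y$, restricts there to $R|_{Z_c}=\tfrac{2}{y^2+2c}\,\frac{\partial}{\partial\alpha}$. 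This is a nowhere-vanishing multiple of $\partial_\alpha$, so every circle $\{x=0,\ y=\mathrm{const},\ G=\tfrac12 y^2-c\}$ is a non-trivial periodic Reeb orbit contained in $Z_c$; there is a one-parameter family of these, hence infinitely many. (Equivalently $R|_{Z_c}$ is the Hamiltonian vector field of $u=y$ for the area form $\tfrac12(y^2+2c)\,d\alpha\wedge dy$ on $Z_c$, matching the Hamiltonian picture of Proposition~\ref{prop:ReebHamdim3} on $Z$.)

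The main obstacle is to make the whole $b^3$-contact picture survive the limit $x\to0$: one must verify that $Y$ is a genuine $b^3$-vector field --- which forces using the momentum Liouville field rather than the position one --- and that $U$ vanishes fast enough at $x=0$ that $Y$ does not become tangent to $\Sigma_c$ there; this works precisely because $c>0$ keeps $Y(H)|_{\Sigma_c}$ bounded below by $c$ while the gravitational terms of $U$ die off. Once this is secured, the non-compactness of $Z_c$ --- which puts Proposition~\ref{prop:unonconstant3dim} out of direct reach --- is handled by the explicit form of $R|_{Z_c}$, all of whose orbits are visibly periodic.
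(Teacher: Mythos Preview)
Your proof is correct and follows essentially the same route as the paper: pass to polar coordinates, apply the McGehee change, observe that the momentum Liouville field $Y=P_r\partial_{P_r}+P_\alpha\partial_{P_\alpha}$ is a $b^3$-vector field (unlike the position one), verify transversality to $\Sigma_c$ including at $x=0$, and read off the cylinder $Z_c$ from $G=\tfrac12 y^2-c$. The only substantive difference is in the last step: the paper invokes Proposition~\ref{prop:ReebHamdim3} directly, identifying $u=P_r$ and concluding that the level circles of $P_r$ on the cylinder are periodic Reeb orbits, whereas you compute the Reeb field explicitly in the graph coordinates $(x,\alpha,y)$ and obtain $R|_{Z_c}=\tfrac{2}{y^2+2c}\,\partial_\alpha$ before noting the match with the Hamiltonian picture. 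Your explicit formula has the small advantage of showing that \emph{every} circle $y=\mathrm{const}$ is periodic, including $y=0$, while the paper's phrasing ``away from $P_r=0$'' is unnecessarily cautious; conversely, the paper's appeal to Proposition~\ref{prop:ReebHamdim3} is shorter and illustrates the general mechanism.
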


\begin{proof}
Let us compute the Hamiltonian given by Equation \ref{eq:HamiltonianPR3BP} first in polar coordinates and then perform the McGehee change of coordinate.
The polar coordinates are defined by the position $q=(r\cos\alpha,r\sin\alpha)$, $(r,\theta)\in \R^+\times S^1$, and the momenta $p=(P_r\cos \alpha-\frac{P_\alpha}{r}\sin\alpha,P_r\sin \alpha+\frac{P_\alpha}{r}\cos \alpha)$, $(P_r,P_\alpha)\in \R^2$. Under this coordinate change, the resulting Hamiltonian is given by the following expression:
\begin{align*}
    &H(r,\alpha,P_r,P_\alpha)\\
    =&\frac{1}{2}(P_r^2-(\frac{P_\alpha}{r})^2)-\frac{1-\mu}{r^2-2\mu r \cos \alpha+\mu^2}-\frac{\mu}{r^2-2(1-\mu)r\cos \alpha+(1-\mu)^2}-P_\alpha.
\end{align*}

The coordinate change is symplectic and therefore the symplectic form is given by $dr\wedge d\alpha +dP_r\wedge dP_\alpha$ and the Liouville vector field writes down $Y=P_r\frac{\partial}{\partial P_r}+P_\alpha \frac{\partial}{\partial P_\alpha}$.

After the McGehee change of coordinates $r=\frac{2}{x^2}$, the Hamiltonian is given by
\begin{align*}
&H(x,\alpha,P_r,P_\alpha)\\
=&\frac{1}{2}(P_r^2-\frac{1}{4}x^4P_\alpha^2)-x^4\frac{1-\mu}{4-4\mu x^2\cos\alpha +\mu^2x^4}- x^4\frac{\mu}{4-4x^2(1-\mu)\cos \alpha+(1-\mu)^2 x^4}- P_\alpha.
\end{align*}

The Liouville vector field does not change under the McGehee change of coordinates, but instead of a symplectic form, the underlying geometric structure is a $b^3$-symplectic structure with critical set given by $\{x=0\}$ given by $\omega=-4\frac{dx}{x^3}\wedge dPr +d\alpha \wedge dP_\alpha$. We already checked that the Liouville vector field is everywhere transverse to the level-set of $H$ and we now check that it is also transverse at the critical set.

On the critical set, the Hamiltonian is given by $H=\frac{1}{2}P_r^2-P_\alpha$, so that $Y(H)=P_r^2- P_\alpha$. On the level-set $H=c>0$, we obtain $Y(H)=\frac{1}{2}P_r^2+c>0$. Hence it is transverse to the critical set as well, and therefore the induced $b^3$-contact form on the critical set is given by $\alpha=(P_r\frac{dx}{x^3}+P_\alpha d\alpha)|_{H=c}$.

The critical set of the $b^3$-contact manifold is given by $Z=\{(x,\alpha,P_r,P_\alpha)|x=0,\frac{1}{2}P_r^2-P_\alpha=c \}$. Topologically, the critical set of the $b^3$-contact manifold is given by $Z=\{(x,\alpha,P_r,P_\alpha)|x=0,\frac{1}{2}P_r^2-P_\alpha\}$. Topologically, the critical set is a cylinder, as solutions for $\frac{1}{2}P_r^2-P_\alpha=c$ are given by $P_\alpha=\frac{1}{2}P_r^2-c:=P_\alpha(P_r)$. The cylinder is described by $Z=\{0,\alpha,P_r,P_\alpha(P_r)\}$ and hence non-compact.

According to the decomposition lemma, the $b^3$-contact form decomposes as $\alpha=u\frac{dx}{x^3}+\beta$ and by Proposition \ref{prop:ReebHamdim3}, the Reeb vector field on the critical set is Hamiltonian for the Hamiltonian function $u$. The Hamiltonian function here is given by $P_r$. As the Hamiltonian vector field is contained in the level-set of the Hamiltonian, we obtain that both cylinder are foliated by non-trivial periodic orbits away from $P_r=0$.
\end{proof}

A reformulation of Theorem \ref{thm:bcontact3bp} from a view-point of dynamical system is the following:

\begin{corollary}
After the McGehee change in the RPC3BP, there are infinitely many non-trivial periodic orbits at the manifold at infinity for energy values of $H=c>0$ (that is hyperbolic motion).
\end{corollary}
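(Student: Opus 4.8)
The plan is to read off this corollary from Theorem \ref{thm:bcontact3bp}; the only point needing care is the dictionary between the $b^3$-Reeb flow on the critical set and the Hamiltonian flow of the RPC3BP restricted to the manifold at infinity.

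First I would record the standard identification of Reeb and Hamiltonian dynamics on a Liouville hypersurface. Since $Y$ is Liouville, $d(\iota_Y\omega)=\mathcal{L}_Y\omega=\omega$, so on $\Sigma_c$ the $2$-form $d\alpha$, with $\alpha=\iota_Y\omega$, coincides with $\omega|_{\Sigma_c}$, whose kernel is the characteristic line field spanned by the Hamiltonian vector field $X_H|_{\Sigma_c}$. Hence $R_\alpha$ is collinear with $X_H|_{\Sigma_c}$, and the normalization $\alpha(R_\alpha)=1$ forces $R_\alpha=X_H|_{\Sigma_c}/Y(H)$. By the Lemma and the computation in the proof of Theorem \ref{thm:bcontact3bp}, on $\Sigma_c$ one has $Y(H)=\tfrac12|p|^2+\tfrac{1-\mu}{|q-q_E|}+\tfrac{\mu}{|q-q_M|}+c$, which is strictly positive for $c>0$, and on $Z=\{x=0\}\cap\Sigma_c$ it reduces to $\tfrac12 P_r^2+c>0$. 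Since all objects in sight are smooth in the $b^3$-category, the equality $R_\alpha=X_H|_{\Sigma_c}/Y(H)$ persists across $Z$ as an identity of $b^3$-vector fields; restricting to the invariant hypersurface $Z$, the $b^3$-Reeb vector field on $Z$ equals the Hamiltonian vector field $X_H|_Z$ up to the positive reparametrization $1/Y(H)|_Z$. A positive reparametrization sends non-constant periodic orbits to non-constant periodic orbits, so the infinitely many non-trivial periodic $b^3$-Reeb orbits on $Z$ furnished by Theorem \ref{thm:bcontact3bp}---the circles $\{P_r=\mathrm{const}\neq 0\}$ in the cylinder $Z$---are in particular infinitely many non-trivial periodic orbits of the Hamiltonian flow of $H$ at energy $c$ lying inside $Z$.

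Finally I would observe that under the McGehee change $r=2/x^2$ the set $\{x=0\}$ is exactly the locus $r=\infty$, i.e.\ the manifold at infinity, and that the regime $H=c>0$ is precisely that of hyperbolic (escape) motion in the RPC3BP, which accounts for the parenthetical remark in the statement; assembling these identifications with the previous paragraph proves the corollary. I expect no real obstacle: the corollary is essentially a reformulation of Theorem \ref{thm:bcontact3bp}, and the one delicate point---that the correspondence between the Reeb flow of $\iota_Y\omega$ and the Hamiltonian flow of $H$ survives the passage to the critical hypersurface, where $\alpha$ is singular and $R_\alpha$ is only a $b^3$-vector field---is handled by working throughout with $b^3$-objects together with the strict positivity of $Y(H)$ on $\Sigma_c$ for $c>0$.
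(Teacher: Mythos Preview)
Your proposal is correct and follows the paper's approach: the paper presents the corollary with no proof at all, introducing it simply as ``a reformulation of Theorem~\ref{thm:bcontact3bp} from a view-point of dynamical system,'' so it is treated as an immediate restatement. You supply the one piece the paper leaves implicit---the standard identification $R_\alpha=X_H|_{\Sigma_c}/Y(H)$ on a Liouville hypersurface and its persistence in the $b^3$-category via positivity of $Y(H)$---which is exactly what is needed to read periodic Reeb orbits on $Z$ as periodic orbits of the RPC3BP Hamiltonian flow at infinity; this is a welcome clarification rather than a different route.
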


Periodic orbits at infinity have been studied in the past to successfully show oscillatory motions in the RPC3BP, see \cite{gms}, as well as to show global instability, see \cite{DKRS}. The result presented here in fact generalizes the result on the existence of periodic orbits in \cite{DKRS}, where the authors consider parabolic motions. As we consider positive energy level-sets, the motion considered here is classically known as hyperbolic motion. The authors believe that the introduced techniques in this paper do not only provide understanding of the dynamics at the manifold at infinity, as is presented in the last result, but also away from the critical set by applying perturbation methods (continuation methods, KAM theory,...) to the set-up provided in this paper. This will be tackled in an upcoming paper.

\subsection{$b$-Reeb dynamics and Beltrami vector fields}

In this section we go back to the example of $b$-contact in the context of Euler flows and $b$-Beltrami fields. We will prove  the following proposition which was explained to us by Daniel Peralta-Salas \cite{peraltasalas} but which we include here for the sake of completeness. The proof follows  the lines as the proof of Proposition 27 in \cite{peraltasalas2}.

\begin{proposition}
Consider a closed surface $\Sigma \subset M$. Assume that $\Sigma$ is invariant by a smooth vector field $X$ . Then if $X$ is a rotational Beltrami, its restriction $X|_\Sigma$ cannot be Hamiltonian.
\end{proposition}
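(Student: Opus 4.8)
The strategy is an area/flux argument on the closed surface $\Sigma$, in the spirit of Proposition 27 of \cite{peraltasalas2}. Suppose, for contradiction, that $X$ is a rotational Beltrami field ($\operatorname{curl}X = fX$ with $f$ nowhere zero, say $f>0$) and that $\Sigma$ is invariant under $X$, so that $X|_\Sigma$ is tangent to $\Sigma$. Assume moreover that $X|_\Sigma$ is Hamiltonian on $(\Sigma, \sigma)$ for some area form $\sigma$ on the closed surface $\Sigma$; that is, $\iota_{X|_\Sigma}\sigma = dh$ for some $h \in C^\infty(\Sigma)$.

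First I would record what the Beltrami condition gives along $\Sigma$. Writing $\alpha = \iota_X g$ and letting $\mu$ be the Riemannian volume, the equation $\iota_\omega \mu = d\alpha$ together with $\operatorname{curl}X = fX$ reads $d\alpha = f\,\iota_X\mu$. Pull this back to $\Sigma$: since $X$ is tangent to $\Sigma$, the $2$-form $(\iota_X\mu)|_\Sigma$ equals $\iota_{X|_\Sigma}$ of the area form induced on $\Sigma$ by $\mu$ and the (co)orientation — up to a positive function it is an area form on $\Sigma$ contracted with $X|_\Sigma$. The key point is that $d(\alpha|_\Sigma) = (d\alpha)|_\Sigma = f|_\Sigma\,(\iota_X\mu)|_\Sigma$, and $f|_\Sigma$ has a fixed sign, so $d(\alpha|_\Sigma)$ is (a positive multiple of) $\iota_{X|_\Sigma}$ applied to an area form on $\Sigma$.

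Next I would integrate. On one hand, $\int_\Sigma d(\alpha|_\Sigma) = 0$ by Stokes, since $\Sigma$ is closed. On the other hand, $d(\alpha|_\Sigma) = f|_\Sigma\,\iota_{X|_\Sigma}\nu$ where $\nu$ is an area form on $\Sigma$ with a definite orientation; so $\int_\Sigma d(\alpha|_\Sigma) = \int_\Sigma f|_\Sigma\,\iota_{X|_\Sigma}\nu$. This is where the Hamiltonian hypothesis enters: if $X|_\Sigma = X_h$ is Hamiltonian with respect to some area form $\sigma$ on $\Sigma$, then $\iota_{X|_\Sigma}\nu = \varrho\,\iota_{X|_\Sigma}\sigma = \varrho\,dh$ for the positive function $\varrho = \nu/\sigma$. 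Then $d(\alpha|_\Sigma) = f|_\Sigma\,\varrho\, dh$, and I would argue this cannot integrate to zero over $\Sigma$ unless $X|_\Sigma$ vanishes. Concretely: at a point $p$ where $h$ attains its maximum on the compact $\Sigma$, $dh_p = 0$, so $d(\alpha|_\Sigma)_p = 0$; but the sign analysis of $f|_\Sigma \iota_{X|_\Sigma}\nu$ should be localized more carefully — the cleanest route is to observe that $g:=f|_\Sigma\,\varrho>0$ everywhere, so $\int_\Sigma g\, dh = 0$ forces, after integrating by parts against $h$ (i.e. using $\int_\Sigma g\,dh \wedge (\text{nothing})$ — in dimension $2$, $g\,dh$ is already a $2$-form? no, it is a $2$-form only because $dh$ here denotes $\iota_{X_h}\sigma$), a contradiction with $g>0$ and $X|_\Sigma\not\equiv 0$, since $f|_\Sigma\,\iota_{X|_\Sigma}\nu$ would have to change sign, contradicting positivity of $f$. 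The upshot is that $X|_\Sigma \equiv 0$; but then $\alpha|_\Sigma = (\iota_X g)|_\Sigma = 0$ as well, and since $\alpha \wedge d\alpha > 0$ everywhere (nonsingular rotational Beltrami gives a contact form, Theorem \ref{correspondence}) this forces $\Sigma$ to be, along itself, compatible with a contact form whose pull-back vanishes — in fact $X$ nonvanishing means $X|_\Sigma$ cannot be identically zero, the final contradiction.

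The main obstacle, and the step requiring the most care, is the bookkeeping of orientations and positive conformal factors relating the three area forms in play on $\Sigma$: the one induced by $\mu$ (in which $(\iota_X\mu)|_\Sigma$ lives), the symplectic form $\sigma$ for which $X|_\Sigma$ is assumed Hamiltonian, and any reference area form used for Stokes. One must verify that the sign of $f|_\Sigma$ really does control the sign of the integrand $f|_\Sigma\,\iota_{X|_\Sigma}\nu$ globally — i.e. that $\iota_{X_h}\sigma = dh$ has a consistent sign relationship to $\iota_{X|_\Sigma}\nu$ — so that the contradiction with $\int_\Sigma d(\alpha|_\Sigma)=0$ is genuine. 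I would handle this by working in the orientation induced on $\Sigma$ by the coorientation coming from $X$ transverse data, so that $\iota_X\mu$ restricts to a \emph{positively} proportional multiple of $\iota_{X|_\Sigma}\sigma$, after which the argument closes exactly as in \cite[Prop.~27]{peraltasalas2}.
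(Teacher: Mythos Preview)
Your argument contains a basic error at the pivotal step. You write that since $X$ is tangent to $\Sigma$, the $2$-form $(\iota_X\mu)|_\Sigma$ equals $\iota_{X|_\Sigma}$ of an induced area form on $\Sigma$. This is false: if $X$ is tangent to $\Sigma$, then for any $v,w\in T_p\Sigma$ the three vectors $X_p,v,w$ lie in the $2$-plane $T_p\Sigma$, hence are linearly dependent, so $\mu(X,v,w)=0$. Thus $(\iota_X\mu)|_\Sigma\equiv 0$, and consequently $d(\alpha|_\Sigma)=f|_\Sigma\,(\iota_X\mu)|_\Sigma\equiv 0$. Your global Stokes integral $\int_\Sigma d(\alpha|_\Sigma)=0$ is then the trivial identity $0=0$ and produces no contradiction. (The identity you had in mind, $(\iota_X\mu)|_\Sigma=\iota_{X|_\Sigma}\nu$, would hold if $X$ were \emph{normal} to $\Sigma$, which is the opposite hypothesis.) Your own hesitation later in the write-up---``$g\,dh$ is already a $2$-form? no\dots''---is a symptom of this: $dh$ is a $1$-form, $\iota_{X_h}\sigma=dh$ is a $1$-form, and there is simply no nonzero $2$-form on $\Sigma$ coming out of $d\alpha$ to integrate.

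The paper's proof uses precisely the vanishing $d(\alpha|_\Sigma)=0$, but exploits it differently. First, $\alpha|_\Sigma$ being closed means $X|_\Sigma$ is locally a gradient (of a harmonic function, since $X$ is divergence-free), so its zeros are isolated. The Hamiltonian hypothesis then guarantees, near an isolated extremum of $H$, a contractible periodic orbit $\gamma=\partial D\subset\Sigma$ of $X$. The contradiction comes from computing the circulation along this orbit two ways: on one hand $\int_\gamma \alpha=\int_0^T g(X,X)\,dt>0$ because $\gamma$ is a nontrivial orbit of $X$; on the other hand $\int_\gamma\alpha=\int_D d\alpha|_\Sigma=\int_D f\,(\iota_X\mu)|_\Sigma=0$. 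The Hamiltonian assumption is used to produce the periodic orbit bounding a disk, not to relate two area forms.
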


\begin{proof}
Let us denote  by $j:\Sigma \to M$ the inclusion and let us assume the opposite, that is that $j^*X$ is Hamiltonian, i.e., $j^*X=X_H$ for $H\in C^\infty(\Sigma)$. Then by compactness, $H$ attains its extrema on $\Sigma$ and furthermore the zeros of $j^*X$ are non-degenerate and therefore isolated.

To see this, let us first denote as in Subsection \ref{subsec:fluiddynamics} $\alpha=\iota_X g$. Note that $j^*\alpha$ is closed because $\iota_X d\alpha=0$ and therefore locally exact, hence there exists a function $F\in C^\infty(\Sigma)$ such that $j^*\alpha=dF$. As $\alpha=g(X,\cdot)$, this is saying that the vector field $j^*X$ is the gradient of $F$. As $X$ is divergence free, $F$ is in fact harmonic and therefore the critical points of $F$ are isolated (see for instance \cite{morse}).

There exists hence contractible periodic orbits of $X$ around the extrema of $H$ on $\Sigma$. Let us denote by  $\gamma$ one of these orbits and the disk supporting $\gamma$ by $D$. Let $\sigma$ be an area form on the disk. By Stokes theorem and using the definition of Beltrami vector fields (that is $\operatorname{curl} X=fX$),
$$0<\int_\gamma X ds =\int_D \operatorname{curl} X \cdot N d\sigma =\int_D fX \cdot N d\sigma=0$$
because $X$ is tangent to $\Sigma$. This is a contradiction and hence $j^*X$  cannot be Hamiltonian.
\end{proof}

This result comes as a surprise in view of the following:
As an outcome of Proposition \ref{prop:ReebHamdim3} in the  $3$-dimensional $b$-contact case the Reeb vector field is tangent to the critical set $Z$ and Hamiltonian along $Z$. Now consider the $b$-Beltrami case presented in Subsection \ref{subsec:fluiddynamics}. The critical set of the associated $b$-contact structure is an invariant manifold which is Hamiltonian along $Z$ thus proving that new interesting dynamics emerge from the existence of the critical locus.

	\section{Review of classical results in Hamiltonian and Reeb dynamics}\label{sec:review}

In this section, we outline a survey on the dynamical results in contact and symplectic geometry focusing on the Weinstein conjecture. We will review important breakthroughs concerning the Weinstein conjecture, the Hamiltonian Seifert conjecture, as well as an exposition of traps and plugs.

\subsection{The Weinstein conjecture}\label{subsec:reviewweinstein}

	 The well-known Weinstein conjecture asserts the following:
	
	\begin{conjecture}[Weinstein conjecture]
		Let $(M,\alpha)$ be a closed contact manifold. Then there exists at least one periodic Reeb orbit.
	\end{conjecture}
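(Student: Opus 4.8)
The Weinstein conjecture is, in its full generality, still open, so what follows is less a proof than a programme that stitches together the known partial answers and points to where the real difficulty sits. The plan is to break the problem up according to the nature of the contact structure $\xi = \ker\alpha$ and to treat each regime with the holomorphic-curve or gauge-theoretic tool adapted to it.

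First, in dimension three one would appeal to Taubes' theorem: perturb the Seiberg--Witten equations on $M$ by a large multiple of $r\,d\alpha$ built from the contact form, use the nonvanishing of the Seiberg--Witten invariant to keep solutions alive as $r\to\infty$, and read off from the limiting configurations a nonempty family of Reeb orbits; equivalently, one invokes the nontriviality of embedded contact homology $ECH(M,\xi)$, whose generators are (products of) closed Reeb orbits, so a nonzero group forces at least one orbit. In higher dimensions one would attempt the analogous homological strategy with contact homology or symplectic field theory built from finite-energy $J$-holomorphic curves in the symplectization $\R\times M$, arguing that a nonvanishing invariant (or a nonzero differential) produces a generator, hence an orbit. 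When $\xi$ is overtwisted --- the case this paper cares about --- one bypasses the homology and runs Hofer's Bishop-family argument instead: grow a maximal family of pseudoholomorphic disks with totally real boundary on a lift of the overtwisted disk, show the family can neither close up nor run off to infinity, and conclude that it must bubble off a finite-energy plane in $\R\times M$, whose asymptotic limit is the desired closed Reeb orbit. It is precisely this last mechanism that the later sections extend to $\R^+$-invariant $b^m$-contact manifolds.

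The \textbf{main obstacle}, and the reason no single argument settles the conjecture, is compactness and transversality for these moduli spaces in arbitrary dimension and for an arbitrary (notably tight, possibly non-fillable) contact form: one needs Gromov--Hofer compactness controlling breaking, bubbling and multiply covered curves, together with a coherent virtual perturbation scheme (polyfolds, or Kuranishi/virtual techniques) that makes the relevant counts well defined and, crucially, nonzero. No such package is currently known to cover every closed contact manifold at once, so in practice one is reduced to the case analysis above --- supplemented, in the singular setting of this article, by the $b^m$-specific constructions developed in the remaining sections.
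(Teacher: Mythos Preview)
The statement you were asked to address is a \emph{conjecture}, not a theorem, and the paper does not prove it either: immediately after stating it, the authors write that ``the Weinstein conjecture is still open in full generality but has been proved in several cases,'' citing Taubes in dimension~$3$ and Hofer for overtwisted structures, and then proceed to outline Hofer's argument as background for their own singular extension. Your proposal is therefore not a proof and could not be one, but you have correctly diagnosed the situation and your survey of the known partial approaches (Seiberg--Witten/ECH in dimension~$3$, Hofer's Bishop-family bubbling for overtwisted $\xi$, and the transversality/compactness obstructions blocking a general SFT argument) matches the paper's own discussion essentially point for point.
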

	
	The Weinstein conjecture is still open in full generality but has been proved in several cases, the most striking positive answers being Hofer's proof in the presence of overtwisted disks, see \cite{Hofer,abbashofer} and Taubes proof in dimension $3$, see \cite{taubes}. We are going to outline Hofer's proof for overtwisted contact manifolds.
	
	\begin{definition}\label{def:OTdisk}
A $3$-dimensional contact manifold $(M,\xi=\ker\alpha)$ is called overtwisted if there exists a an embedded disk $D^2$ such that the boundary of $T\partial D \subset \xi|_{\partial D}$ and $TD \cap \xi$ defines a $1$-dimensional foliation except on a unique elliptic\footnote{For a precise definition of elliptic singular point, we refer to Page $55$ of \cite{abbashofer}.} singular point $e \in \text{int} D$ with $T_eD=\xi_p$. The disk $D$ is called overtwisted disk and we will denote it by $D_{OT}$. The point $e$ is called the elliptic singularity.
	\end{definition}
	
	A contact manifold that is not overtwisted is called \emph{tight}.
	
\begin{figure}[!ht]\label{fig:overtwisteddisk}
\centering	
\begin{tikzpicture}
    \draw (0,0) circle (2cm);
    \draw (0,0) ..controls +(0.3,0.5) and +(-0.3,0.3).. (1,0);
    \draw (1,0) ..controls +(0.3,-0.3) and +(0,-0.5).. (1.9,0);
    \draw[rotate=22] (0,0) ..controls +(0.3,0.5) and +(-0.3,0.3).. (1,0);
    \draw[rotate=22] (1,0) ..controls +(0.3,-0.3) and +(0,-0.5).. (1.9,0);
    \draw[rotate=45] (0,0) ..controls +(0.3,0.5) and +(-0.3,0.3).. (1,0);
    \draw[rotate=45] (1,0) ..controls +(0.3,-0.3) and +(0,-0.5).. (1.9,0);
    \draw[rotate=69] (0,0) ..controls +(0.3,0.5) and +(-0.3,0.3).. (1,0);
    \draw[rotate=69] (1,0) ..controls +(0.3,-0.3) and +(0,-0.5).. (1.9,0);
     \draw[rotate=90] (0,0) ..controls +(0.3,0.5) and +(-0.3,0.3).. (1,0);
    \draw[rotate=90] (1,0) ..controls +(0.3,-0.3) and +(0,-0.5).. (1.9,0);
    \draw[rotate=112] (0,0) ..controls +(0.3,0.5) and +(-0.3,0.3).. (1,0);
    \draw[rotate=112] (1,0) ..controls +(0.3,-0.3) and +(0,-0.5).. (1.9,0);
    \draw[rotate=135] (0,0) ..controls +(0.3,0.5) and +(-0.3,0.3).. (1,0);
    \draw[rotate=135] (1,0) ..controls +(0.3,-0.3) and +(0,-0.5).. (1.9,0);
    \draw[rotate=157] (0,0) ..controls +(0.3,0.5) and +(-0.3,0.3).. (1,0);
    \draw[rotate=157] (1,0) ..controls +(0.3,-0.3) and +(0,-0.5).. (1.9,0);
    \draw[rotate=180] (0,0) ..controls +(0.3,0.5) and +(-0.3,0.3).. (1,0);
    \draw[rotate=180] (1,0) ..controls +(0.3,-0.3) and +(0,-0.5).. (1.9,0);
    \draw[rotate=202] (0,0) ..controls +(0.3,0.5) and +(-0.3,0.3).. (1,0);
    \draw[rotate=202] (1,0) ..controls +(0.3,-0.3) and +(0,-0.5).. (1.9,0);
    \draw[rotate=225] (0,0) ..controls +(0.3,0.5) and +(-0.3,0.3).. (1,0);
    \draw[rotate=225] (1,0) ..controls +(0.3,-0.3) and +(0,-0.5).. (1.9,0);
    \draw[rotate=247] (0,0) ..controls +(0.3,0.5) and +(-0.3,0.3).. (1,0);
    \draw[rotate=247] (1,0) ..controls +(0.3,-0.3) and +(0,-0.5).. (1.9,0);
    \draw[rotate=270] (0,0) ..controls +(0.3,0.5) and +(-0.3,0.3).. (1,0);
    \draw[rotate=270] (1,0) ..controls +(0.3,-0.3) and +(0,-0.5).. (1.9,0);
    \draw[rotate=292] (0,0) ..controls +(0.3,0.5) and +(-0.3,0.3).. (1,0);
    \draw[rotate=292] (1,0) ..controls +(0.3,-0.3) and +(0,-0.5).. (1.9,0);
    \draw[rotate=315] (0,0) ..controls +(0.3,0.5) and +(-0.3,0.3).. (1,0);
    \draw[rotate=315] (1,0) ..controls +(0.3,-0.3) and +(0,-0.5).. (1.9,0);
    \draw[rotate=337] (0,0) ..controls +(0.3,0.5) and +(-0.3,0.3).. (1,0);
    \draw[rotate=337] (1,0) ..controls +(0.3,-0.3) and +(0,-0.5).. (1.9,0);
\end{tikzpicture}
\caption{An overtwisted disk}
\end{figure}
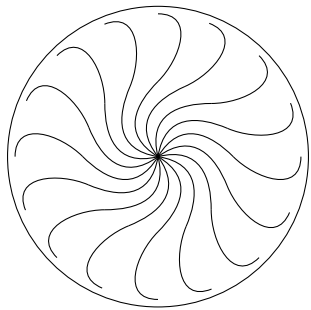
	
	The proof is based on $J$-holomorphic curve techniques applied to the symplectization of the contact manifold $(M,\alpha)$. The almost-complex structure $J$ in the symplectization is compatible with the contact form.
	
	 More precisely, the almost-complex structure considered in the symplectization of the contact manifold $(M,\alpha)$ is constructed as follows. First fix a complex structure $J_\xi$ on the plane-field $\xi=\ker \alpha$ that is compatible with $\alpha$, i.e. $d\alpha(J_\xi\cdot,J_\xi\cdot)=d\alpha(\cdot,\cdot)$ and $d\alpha(\cdot, J_\xi\cdot)>0$. We then extend the complex structure $J_\xi$ on $\xi$ to an almost complex structure $J$ on $M\times \R$, compatible with $\omega=d(e^t\alpha)$ in the following way:
\begin{itemize}
    \item $J|_\xi=J_\xi$,
    \item $\omega(J\cdot,J\cdot)=\omega(\cdot,\cdot)$,
   \item $\omega(\cdot,J\cdot)>0$,
   \item $J(\frac{\partial}{\partial t})= R_\alpha$.
\end{itemize}

A $J$-holomorphic curve is a map from a Riemann surface punctured in a finite set $\Gamma$ to the symplectization $\u:(\Sigma \setminus \Gamma,j) \to (\R \times M,J)$ satisfying  the non-linear Cauchy--Riemann equation $d\u\circ j=J \circ d\u$. Given a $J$-holomorphic curve, the energy is defined by
$$E(\u)= \sup_{\phi \in \Ca}\int_{\Sigma \setminus \Gamma}\u^*d(\phi \alpha)$$
where $\Ca$ is the set of all smooth maps $\phi:\R \to [0,1]$ satisfying that $\phi'\geq 0$. In what follows, we will always denote $ \widetilde{u}=(a,u)$, where $a:\Sigma\setminus \Gamma \to \R$ and $u:\Sigma \setminus \Gamma \to \R$. The \emph{horizontal energy}, also called the $d\alpha$-energy, is defined to be
$$E^h(u)=\int_{\Sigma \setminus \Gamma} u^* d\alpha. $$

It is clear from the definitions that $E^h(u)\leq E(\u)$.
Hofer proved the following result, reducing the quest for periodic Reeb orbits to the existence of non-constant finite energy planes. Those finite energy planes arise from a careful bubbling-off analysis à la Uhlenbeck--Sachs.

\begin{theorem}\label{thm:finiteenergy}
    Let $\u:\mathbb{C} \to \R\times M$ be a non-constant $J$-holomorphic plane such that $E(\u)<\infty$. Then there exists at least one periodic Reeb orbit in $M$.
\end{theorem}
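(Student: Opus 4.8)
The plan is to reproduce Hofer's analysis of finite energy planes from \cite{Hofer}. Write $\u=(a,u)$ and identify the once-punctured plane with a half-cylinder via $z=e^{2\pi(s+it)}$, $(s,t)\in[0,\infty)\times S^1$, so that on the end $s\to+\infty$ the map becomes $(a(s,t),u(s,t))$; let $\pi_\xi\colon TM\to\xi$ denote the projection along $R_\alpha$ and set $m:=E^h(u)=\int_{\mathbb{C}}u^*d\alpha$. The first step is to check that $0<m\le E(\u)<\infty$. Finiteness is the inequality $E^h(u)\le E(\u)$ recorded above. For positivity one uses non-constancy: if $m=0$ then $u^*d\alpha\equiv 0$, so $du$ takes values in $\mathbb{R}\,R_\alpha(u)$ and the image of $u$ lies in a single Reeb trajectory; writing $u=\phi^h_{R_\alpha}(p_0)$, the Cauchy--Riemann equations then express $a$ as a component of an entire holomorphic function whose Dirichlet integral is controlled by $E(\u)$, hence $a$ is constant and so is $\u$, a contradiction.

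Next, by Stokes' theorem the function $\rho(s):=\int_{S^1}u(s,\cdot)^*\alpha=\int_{\{|z|\le e^{2\pi s}\}}u^*d\alpha$ is non-decreasing and $\rho(s)\nearrow m$ as $s\to\infty$. The heart of the argument is then an asymptotic bubbling-off analysis on the end. Pick $s_k\to\infty$ and consider the $\R$-translated maps $\u_k(s,t):=(a(s+s_k,t)-a(s_k,0),\,u(s+s_k,t))$. If $|\nabla\u|$ stays bounded along the end, interior elliptic estimates and Arzel\`a--Ascoli yield a subsequence converging in $C^\infty_{\mathrm{loc}}(\R\times S^1)$ to a finite energy cylinder $\tilde v=(b,v)$; choosing the $s_k$ so that $\rho(s_k+N)-\rho(s_k-N)\to 0$ for every $N$ — possible since $\rho$ is monotone and bounded — forces $\int_{\R\times S^1}v^*d\alpha=0$, i.e. $\pi_\xi\circ dv\equiv 0$. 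If instead $|\nabla\u|$ is unbounded along the end, one rescales around points of maximal gradient to bubble off a non-constant finite energy plane; since the total $d\alpha$-energy is finite this occurs only finitely often, and a soft-rescaling argument still extracts a non-constant cylinder over the end with vanishing $d\alpha$-energy.

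In either case this produces $\tilde v=(b,v)\colon\R\times S^1\to\R\times M$ with $\pi_\xi\circ dv\equiv 0$. The final step reads off the orbit: for each fixed $s$, the loop $t\mapsto v(s,t)$ is an integral curve of a non-negative multiple of $R_\alpha$, hence, being $1$-periodic in $t$, a periodic Reeb orbit of period $T(s)=\int_{S^1}v(s,\cdot)^*\alpha$; by Stokes and $\int v^*d\alpha=0$ this is independent of $s$ and equals $\lim_k\rho(s_k)=m>0$, so the orbit is non-trivial. Thus $M$ carries a periodic Reeb orbit.

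The step I expect to be the main obstacle is the asymptotic bubbling-off analysis: controlling $\nabla\u$ along the non-compact end and ruling out leakage of $d\alpha$-energy into the limiting cylinder. This is exactly Hofer's bubbling-off lemma à la Uhlenbeck--Sachs, and its proof rests on the monotonicity of $\rho$, interior elliptic regularity for the perturbed Cauchy--Riemann operator, and the positivity of the minimal $d\alpha$-energy of a non-constant finite energy plane; the remaining steps are essentially bookkeeping.
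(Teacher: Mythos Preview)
The paper does not give its own proof of this theorem: it is stated as a classical result of Hofer with a reference to \cite{Hofer} (and the monograph \cite{abbashofer}), and is used as a black box in the subsequent bubbling analysis. So there is no ``paper's proof'' to compare against.

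Your outline is a faithful sketch of Hofer's original argument and is essentially correct. The positivity of $m=E^h(u)$ for non-constant planes is exactly Proposition~4.4.2 of \cite{abbashofer} (which the paper itself invokes later), and the extraction of a limiting cylinder with vanishing $d\alpha$-energy via monotonicity of $\rho$ and Arzel\`a--Ascoli is the standard mechanism. Two places could be tightened: in the unbounded-gradient case you appeal to ``soft rescaling'', but Hofer's original argument is more elementary---one shows that gradient blow-up along the end would bubble off a non-constant finite energy plane carrying a definite quantum of $d\alpha$-energy, and since the total $d\alpha$-energy is finite this can occur only finitely many times, after which the gradient is bounded on the remaining end; and in the last step you should make explicit that from $\pi_\xi\circ dv\equiv 0$ together with the Cauchy--Riemann equations one obtains $\tilde v(s,t)=(Ts+c,\phi_{R_\alpha}^{Tt}(p))$ with $T=m>0$, which is what forces $t\mapsto v(s,t)$ to be a genuine $T$-periodic Reeb orbit rather than merely a loop tangent to $R_\alpha$. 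With those clarifications the argument is complete.
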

The $J$-holomorphic curve $\u$ as in Theorem \ref{thm:finiteenergy} is called \emph{finite energy plane}.

 Out of the data of the overtwisted disk, Hofer proved the existence of a family of $J$-holomorphic curves emanating from the elliptic point $e$ of the overtwisted disk $D_{OT}$ that satisfy some additional properties which are key to study this family. We denote by $D_{OT}^*=D_{OT}\setminus\{e\}$.

\begin{theorem}\label{thm:bishop}
    Let $D$ be the $2$-disk. There is a continuous map
    $$\Psi: D \times [0,\epsilon) \to \R\times M$$
    such that for each $\u_t(\cdot) = \Psi(\cdot, t)$
    \begin{enumerate}
        \item $\u_t: D \to M\times \R$ is $J$-holomorphic,
        \item $\u_t(\partial D) \subset D_{\text{OT}}^* \subset \{0\}\times M$ for $t\in (0,\epsilon)$,
        \item $\u_t|_{\partial D}: \partial D \to D_\text{OT}^*$ has winding number $1$ for $t\in (0,\epsilon)$,
        \item $\Psi|_{D\times (0,\epsilon)}$ is a smooth map,
        \item $\Psi(z,0)=e$ for all $z\in D$,
        \item $Ind(\u_t)=2$.
    \end{enumerate}
\end{theorem}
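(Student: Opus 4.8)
The plan is to reconstruct the Bishop family in a neighbourhood of the elliptic singularity $e$ and then propagate it by a standard compactness/continuation argument, so I would proceed in three stages: local existence near $e$, a priori properties, and global continuation.

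First I would set up local coordinates. Near the elliptic singularity $e$, one can choose Darboux-type coordinates on $M$ in which the contact form and the overtwisted disk take a model normal form (this is the content of the local analysis around an elliptic point; see the reference cited in Definition~\ref{def:OTdisk}). In these coordinates the symplectization $(\R\times M, J)$ looks, to leading order, like $(\R\times\R^3, J_{\mathrm{std}})$ with a totally real boundary condition along $D_{OT}^*$. One then solves the Bishop problem: finding small $J$-holomorphic disks with boundary on the totally real submanifold $D_{OT}^*$ and one elliptic tangency at $e$. This is done by an implicit-function-theorem argument — linearise the Cauchy--Riemann operator at the constant solution $\u\equiv e$ (or at the explicit model family in the standard case), verify that the linearised operator is surjective with kernel of the expected dimension once one fixes the boundary winding number to be $1$, and conclude the existence of a $1$-parameter family $\u_t$, $t\in[0,\epsilon)$, depending smoothly on $t$ for $t>0$, with $\u_0\equiv e$. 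The Fredholm index computation of the linearised operator, together with the Maslov index of the boundary condition, gives $\mathrm{Ind}(\u_t)=2$ — this is item (6), and it is forced by the winding-number-$1$ boundary condition and the elliptic nature of $e$.

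Next I would record the qualitative properties (1)--(5). Properties (1), (4), (5) are immediate from the construction (the family is built to be $J$-holomorphic, smooth in $t$ away from $t=0$, and collapsing to $e$ at $t=0$). Property (2), that $\u_t(\partial D)\subset D_{OT}^*\subset\{0\}\times M$, is the imposed boundary condition: the boundary of each disk lives in the zero slice of the symplectization and avoids the elliptic point (for $t>0$ the disks are genuinely nonconstant so their boundaries are nondegenerate loops in $D_{OT}^*$). Property (3), the winding number $1$ of $\u_t|_{\partial D}$ viewed as a loop in $D_{OT}^*\simeq D^2\setminus\{0\}$, is preserved along the family because it is a homotopy invariant and is normalised to be $1$ at the start of the family; the characteristic foliation of $D_{OT}$ having a single elliptic singularity is exactly what makes this winding number well-defined and constant. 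I would then need a continuation argument to show the family extends over the maximal interval: given the a priori bound on the $d\alpha$-energy (which is controlled because the boundary lies on the fixed disk $D_{OT}$), Gromov compactness applies, and the only obstruction to continuing is bubbling. So one argues that for small enough $t$ no bubbling occurs — this uses the monotonicity lemma together with the smallness of the energy of the model disks near $e$ — hence the family exists on some interval $[0,\epsilon)$.

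The main obstacle is the local existence step: verifying transversality of the linearised Cauchy--Riemann operator with the totally real boundary condition at an elliptic tangency point, and extracting precisely the winding-number-$1$, index-$2$ component of the moduli space. The elliptic singularity is a genuinely singular boundary point (the disk is tangent to $\xi$ there), so the standard theory of $J$-holomorphic disks with totally real boundary does not apply verbatim; one must either work with the explicit model (Hopf-type disks in the standard overtwisted model, following Hofer), where the family can be written down by hand and the linearised operator diagonalised via Fourier series on $\partial D$, or invoke an automatic-transversality result valid in this low-dimensional setting. Once the model family is in hand, persistence of the full family $\Psi$ under the actual (non-model) contact form and almost complex structure is a routine application of the implicit function theorem plus the compactness argument above. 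I would therefore structure the proof as: (i) reduce to the standard overtwisted model near $e$; (ii) write down the explicit Bishop disks and compute their index and winding number; (iii) transport the family to the general case by the implicit function theorem and extend it by Gromov compactness, noting that the energy bound and absence of bubbling for small $t$ give the parameter interval $[0,\epsilon)$.
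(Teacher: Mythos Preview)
The paper does not actually prove this theorem: it is stated in the review Section~\ref{sec:review} as a known result due to Hofer, and the reader is implicitly referred to \cite{Hofer} and \cite{abbashofer} for the proof. So there is no ``paper's own proof'' to compare against.

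That said, your outline is a faithful sketch of the standard argument as carried out in \cite{abbashofer}: reduce to a local model near the elliptic point, write down the explicit family of small Bishop disks (the Hopf-type disks), verify surjectivity of the linearised Cauchy--Riemann operator to compute the index, and use the implicit function theorem to propagate the family. Your identification of the main technical point --- transversality at the elliptic tangency, handled by working in the explicit model rather than by abstract automatic-transversality --- is correct and matches how the references proceed. The only caveat is that your final paragraph mentions Gromov compactness and a continuation argument to extend the family; for the statement as given (existence on \emph{some} interval $[0,\epsilon)$) this is not needed, since the implicit function theorem alone produces the local family. The compactness/continuation argument belongs to the subsequent analysis (Lemma~\ref{lem:uniformboundimpliescompact} onward), not to the existence statement itself.
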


The family $\{\u_t\}_{t\in[0,\epsilon[}$ is called \emph{Bishop family}. It is then studied whether or not the family can be extended. First, as an application of the maximum principle, the Bishop family restricted to the boundary is necessarily transverse to the characteristic foliation on the overtwisted disk and foliate a neighbourhood of the elliptic singularity $e$ by circles.

\begin{lemma}\label{lem:transverse}
The Bishop family is transverse to transverse to the characteristic foliation of the overtwisted disk $\xi|_{D_{OT}^*} \cap TD_{OT}^*$.
\end{lemma}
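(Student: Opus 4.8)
The plan is to use the maximum principle applied to the function $a \circ \u_t$ (the $\R$-component of the Bishop disk) together with the boundary condition $\u_t(\partial D) \subset D_{OT}^* \subset \{0\} \times M$. First I would recall the standard fact that for a $J$-holomorphic map $\u_t = (a_t, u_t)$ into the symplectization $(\R\times M, J)$ with the adapted almost complex structure described above, the function $a_t$ is subharmonic on $D$ (this follows from $J(\partial_t) = R_\alpha$ and positivity of $d\alpha$ on $\xi$ with respect to $J_\xi$). Since $a_t \equiv 0$ on $\partial D$ by property (2) of Theorem \ref{thm:bishop}, the maximum principle forces $a_t \leq 0$ on all of $D$, so $\u_t$ maps into $\{a \leq 0\}$ and touches the wall $\{a=0\}$ exactly along $\partial D$.

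Next I would exploit this to control the boundary behaviour. The Hopf boundary-point lemma applied to $a_t$ at points of $\partial D$ gives that the outward normal derivative $\partial_\nu a_t$ is strictly positive along $\partial D$ (assuming $\u_t$ is non-constant, which holds for $t \in (0,\epsilon)$). Using the Cauchy--Riemann equation $d\u_t \circ j = J\circ d\u_t$, the tangential derivative of $u_t$ along $\partial D$ is then related to $J_\xi$ applied to $\partial_\nu a_t \cdot R_\alpha$; more precisely, decomposing the boundary velocity $\dot{u}_t$ of the loop $u_t|_{\partial D}$ into its $\xi$-component and its $R_\alpha$-component, the $R_\alpha$-component is governed by $d\alpha$-pairings that vanish because $u_t(\partial D)$ lies in the Legendrian-type characteristic picture of $D_{OT}$, while the $\xi$-component is forced to be nonzero and to rotate in a fixed direction. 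Consequently $\dot u_t|_{\partial D}$ is everywhere transverse to the line field $\xi \cap TD_{OT}^*$, which is precisely the characteristic foliation; this is the assertion of the lemma. (This is the argument of Hofer, and of Abbas--Hofer; I would cite \cite{Hofer} and \cite{abbashofer} for the details of the boundary analysis.)

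The main obstacle here is making the boundary computation clean: one must check that the $R_\alpha$-component of the boundary velocity genuinely vanishes, i.e. that the loop $u_t|_{\partial D}$ stays tangent to the contact-geometric structure imposed by the overtwisted disk, and simultaneously that the $\xi$-component is nonvanishing so that "transverse to the foliation" makes sense (a tangency would mean the boundary velocity lies in $\xi \cap TD_{OT}$, which the strict sign $\partial_\nu a_t > 0$ combined with the $J$-equation rules out). Everything else — subharmonicity of $a_t$, the maximum principle, the Hopf lemma — is routine once the adapted $J$ is in place. I would present the subharmonicity and the maximum-principle step in detail, then invoke the Hopf lemma and the $J$-equation to extract transversality, referring to \cite{abbashofer} for the remaining boundary bookkeeping.
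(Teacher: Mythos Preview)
Your overall strategy---subharmonicity of $a_t$, the maximum principle, the Hopf boundary-point lemma, and then the Cauchy--Riemann equation to extract boundary information---is the standard one, and it is what the paper has in mind: the paper does not actually prove this lemma but merely records it as a classical fact ``as an application of the maximum principle,'' implicitly deferring to \cite{Hofer} and \cite{abbashofer}. So in spirit you are aligned with the paper.

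However, your boundary computation is inverted, and this is not a cosmetic slip. You assert that the $R_\alpha$-component of $\dot u_t$ vanishes while the $\xi$-component is nonzero. If that were true, then $\dot u_t \in \xi$; but $\dot u_t$ is also tangent to $D_{OT}$ (since $u_t(\partial D) \subset D_{OT}$), so you would conclude $\dot u_t \in \xi \cap TD_{OT}$, which is \emph{precisely} the characteristic foliation. That is tangency, not transversality. The correct argument runs the other way: the Cauchy--Riemann relation gives (up to sign convention) $\alpha(\dot u_t) = \partial_\nu a_t$, and the Hopf lemma forces this to be strictly positive along $\partial D$. Hence $\dot u_t \notin \xi$, so a fortiori $\dot u_t \notin \xi \cap TD_{OT}$, and that is the transversality. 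The $d\alpha$-pairing you invoke plays no role here; what matters is the $\alpha$-pairing, and it is nonvanishing exactly because of the Hopf lemma you already cited. Fix this reversal and your sketch becomes the standard proof.
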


It is shown that if the gradient of $\u_t$ is uniformly bounded in the interval $[0,T]$, then the family $\{u_t\}_{t\in[0,\epsilon[}$ can be maximally extended. However, this results leads to a contradiction with the transversality in Lemma \ref{lem:transverse}. Hence the norm of the gradient blows-up. There are basically two different possibilities for the gradient to blow up: it can blow up at the boundary of the $J$-holomorphic disk or in the interior.  A careful analysis then shows that in the case where the gradient blows up on the boundary, so called disk bubbling happens, which {once more} contradicts transversality of the Bishop family with the characteristic foliation. The only possibility is that the Bishop family blows up in the interior. The blow-up of the norm of the gradient in the interior of the disk is giving rise to bubbling phenomena. A carefully chosen reparametrization of the bubble converges uniformly to a non-constant finite energy plane. Hence by Theorem \ref{thm:finiteenergy} there exists a periodic Reeb orbit.

	\subsection{Hamiltonian Seifert conjecture}
	Contact manifolds can be seen as a particular case of a level-set of a Hamiltonian $H$ in symplectic manifolds, where the Reeb flow is a reparametrization of the Hamiltonian flow.
	
	In the set-up of Hamiltonian dynamics, periodic orbits are in a one-to-one correspondence with the critical points of the action functional $\mathcal{A}_H$. The action of a contractible loop on a symplectic manifold $(W,\omega)$ is given by $$\mathcal{A}_H(\gamma)= \int_{D^2} u^*\omega + \int_{S^1} H(\gamma(t))dt,$$ where $u:D^2 \to W$ is such that $u(\partial D^2)=\gamma$. Here $\gamma$ is assumed to be periodic.
	
	Powerful variational methods arise from the least action principle. For instance is known that ``almost all" level-sets contain periodic orbits of the Hamiltonian flow for a large class of symplectic manifolds. More precisely, let us mention the following almost-existence theorem:

	\begin{theorem}[\cite{hoferzehnder}]\label{almostexistence}
		Let $(M,\omega)$ be a symplectic manifold of finite Hofer--Zehnder capacity. Then for all $H: M \to \R$ such that $\{H \leq a\}$ is compact, almost all level-sets contain periodic orbits.
	\end{theorem}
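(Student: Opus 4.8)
The plan is to derive the theorem from the defining property of the Hofer--Zehnder capacity, together with Sard's theorem. Recall that $c_{HZ}(M,\omega)$ is the supremum of $\max K$ over all \emph{admissible} Hamiltonians $K$, i.e.\ functions with $0\le K\le\max K$ that vanish on a nonempty open set, equal $\max K$ outside a compact subset of $M\setminus\partial M$, and whose flow has no non-constant periodic orbit of period $\le 1$; recall also that $c_{HZ}$ is monotone under inclusion of open subsets. Hence finiteness of $c_{HZ}(M,\omega)$ rules out admissible Hamiltonians of arbitrarily large maximum, and the whole argument amounts to manufacturing such Hamiltonians out of a hypothetical positive-measure family of periodic-orbit-free level sets.

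First I would set up the contradiction and localize. Suppose the set $B$ of values $c<a$ for which $\Sigma_c:=H^{-1}(c)$ carries no periodic orbit of $X_H$ has positive measure. Removing the null set of critical values of $H$, pick a Lebesgue density point $c^*\in B$ that is a regular value; since the regular values form an open set and $c^*<a$, there is a compact interval $[a_0,a_1]\ni c^*$ consisting of regular values, with $a_1<a$ and $m:=|B\cap[a_0,a_1]|>0$. Then $W:=H^{-1}([a_0,a_1])$ is a compact subset of $\{H\le a\}$ on which $X_H$ is nowhere zero, so $|X_H|\ge\delta>0$ on $W$; combined with a uniform bound on $DX_H$ over $W$, a standard a priori estimate produces $\tau_0>0$ such that every non-constant periodic orbit of $X_H$ through a point of $W$ has period $\ge\tau_0$ (and, since $H$ is constant along it, lies on a single level $\Sigma_c\subset W$). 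Consequently, for each integer $n\ge\tau_0$ the set $\{(x,T)\in W\times[\tau_0,n]:\phi^H_T(x)=x,\ X_H(x)\ne 0\}$ is compact, hence so is its image under $H$, and therefore
$$U_n:=\{\,c\in(a_0,a_1)\;:\;\Sigma_c\text{ has no closed }X_H\text{-orbit of period }\le n\,\}$$
is open. Since every non-constant closed orbit in $W$ has period $\ge\tau_0$, one has $\bigcap_{n\ge\tau_0}U_n=B\cap(a_0,a_1)$; the sets $U_n$ decrease, so $|U_n|\downarrow m>0$ and $|U_n|>m/2$ for all large $n$.

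Next I would build admissible Hamiltonians of unbounded maximum inside the relatively compact open set $U:=\{H<a\}$, whose capacity is $\le c_{HZ}(M,\omega)$ by monotonicity. Fix a large $n$; by inner regularity pick a compact $C\subset U_n$ with $|C|>m/3$, then a smooth $\chi:\R\to[0,1]$ that equals $1$ near $C$ and has $\operatorname{supp}\chi$ a compact subset of $U_n$, and set $f(c):=n\int_{a_0}^{c}\chi(s)\,ds$. Then $f$ is smooth, non-decreasing, $f\equiv 0$ on $(-\infty,a_0]$, $0\le f'\le n$ with $f'$ supported in $U_n$, and $f\equiv L_n:=\max f=n\int_\R\chi\ge nm/3$ on $[a_1,\infty)$. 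Put $K_n:=f\circ H$ on $U$. Then $K_n$ vanishes on the nonempty open set $\{H<a_0\}$, equals $L_n=\max K_n$ outside the compact set $\{H\le a_1\}\subset U$, and $0\le K_n\le L_n$; moreover $X_{K_n}=(f'\circ H)\,X_H$ preserves $H$, so every non-constant periodic orbit of $X_{K_n}$ sits on a level $\Sigma_c$ with $c\in U_n$ and is the reparametrization, by the constant $f'(c)\le n$, of a closed $X_H$-orbit on $\Sigma_c$ whose period is $>n$; hence this orbit has period $>n/n=1$. Thus $K_n$ is admissible on $U$, so $L_n\le c_{HZ}(U,\omega)\le c_{HZ}(M,\omega)$ for all $n$ --- contradicting $L_n\to\infty$.

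The delicate point is precisely the step that makes the sets $U_n$ open and of measure bounded below away from zero: since the bad set $B$ may have empty interior, a reparametrization $f\circ H$ with $f'$ concentrated near one bad level cannot be rendered admissible, because continuity of $f'$ forces large values on neighbouring levels, which may carry arbitrarily short closed orbits. Realizing $B$ as the decreasing intersection of the open sets $U_n$ of ``levels without short closed orbits'' --- open thanks to the uniform lower period bound $\tau_0$ on the compact regular shell $W$ --- and then exploiting the full positive measure $m$ of $B$ rather than any interval structure, is what makes the proof work. The remaining ingredients (Sard's theorem, compactness of the set of short closed orbits, the smoothing of $f$, and the extension and monotonicity properties of $c_{HZ}$) are routine.
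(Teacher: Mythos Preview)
The paper does not supply a proof of this theorem at all: it is quoted in the review section (Section~\ref{sec:review}) as a classical result from \cite{hoferzehnder}, with no argument given. There is therefore nothing in the paper to compare your proposal against.

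That said, your argument is essentially the standard Hofer--Zehnder proof and is correct. The only places where you should be a bit more careful in writing it up are: (i) make explicit that ``almost all level-sets'' refers to values in the range of $H$ below $a$ (otherwise the empty levels below $\min H$ form an unbounded set of trivially aperiodic values); (ii) when you pick the regular interval $[a_0,a_1]$ around the density point $c^*$, note that since $c^*$ is a regular value with $\Sigma_{c^*}\neq\emptyset$ and the critical values are closed, one may arrange $a_0>\min H$ so that $\{H<a_0\}$ is genuinely a nonempty open set on which $K_n$ vanishes; and (iii) the admissibility of $K_n$ on $U=\{H<a\}$ uses that $\{H\le a_1\}$ is compact, which follows from the hypothesis since $a_1<a$. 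These are cosmetic; the substance of the proof---realising the bad set as a decreasing intersection of open sets $U_n$ via the uniform lower period bound on the compact regular shell, and then building admissible Hamiltonians with $\max K_n\to\infty$---is exactly the classical one.
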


 A value $a$ of a Hamiltonian $H$ is called \emph{aperiodic} if the level $\{H = a\}$ carries no periodic orbits and we denote by $\mathcal{AP}_H$ the set of aperiodic orbits. Theorem \ref{almostexistence} can be restated that $\mathcal{AP}_H$ is of measure zero for many symplectic manifolds.

  The Hamiltonian Seifert conjecture states that $\mathcal{AP}_H$ for $H$ a proper, smooth function in $(\R^{2n},\omega_{\text{st}})$ is empty. The conjecture is known to be false:


\begin{theorem}[\cite{Ginzburg2}]\label{thmginzburg}
  Let $2n \geq 6$. There exists a smooth function $H:\R^{2n} \to \R$ such that the flow of $X_H$ does not have any closed orbits on the level set $\{H=1 \}$.
\end{theorem}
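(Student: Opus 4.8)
The plan is to carry out the \emph{symplectic plug} construction, following Ginzburg \cite{Ginzburg2}. Call a compact symplectic manifold with corners $(P^{2n},\omega_P)$ together with a smooth function $K\colon P\to\R$ a symplectic plug if: (i) a neighbourhood of $\partial P$ is symplectomorphic to a neighbourhood of a box $[0,1]_{x_1}\times[0,1]_{x_2}\times B^{2n-2}\subset(\R^{2n},\omega_{\mathrm{st}})$ with $K$ corresponding to $x_1$, so that on this collar the characteristic flow of $N:=\{K=1\}$ is the translation flow $\partial/\partial x_2$; (ii) $N$ carries no closed characteristic; (iii) the \emph{trapping} and \emph{matched ends} conditions hold, i.e. every characteristic on $N$ enters through $\{x_2=0\}$, there is a nonempty open set of entry points whose characteristics stay in $N$ for all forward time, and every characteristic that reaches $\{x_2=1\}$ exits at the same point of $B^{2n-2}$ at which it entered. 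Granting such a $(P,\omega_P,K)$, one starts from the irrational ellipsoid $\Sigma=\{\sum_{j=1}^{n}\pi|z_j|^2/a_j=1\}\subset\R^{2n}$ with $a_1,\dots,a_n$ rationally independent, which bounds a compact region and has exactly $n$ closed characteristics. Along each of these $n$ orbits one picks a flowbox — by the symplectic flowbox normal form the characteristic flow there is conjugate to a translation flow on a box — placed so that the orbit enters through the trapping set, excises the box and glues in a copy of $P$, patching the symplectic forms and the functions $x_1\leftrightarrow K$ by Weinstein's neighbourhood theorem. The resulting hypersurface $\widetilde\Sigma$ still bounds a compact region, hence is a regular level set $\{H=1\}$ of a smooth proper $H\colon\R^{2n}\to\R$ equal to the ellipsoid function outside a compact set. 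A closed orbit of the characteristic flow of $\widetilde\Sigma$ lying entirely outside the plugs would be a closed characteristic of $\Sigma$ avoiding all $n$ flowboxes, of which there are none; a closed orbit meeting a plug either gets trapped, hence is not closed, or passes through by matched ends, in which case it projects to a closed characteristic of $\Sigma$ — one of the $n$ known ones, which we arranged to enter the trapping set, forcing the orbit to trap, a contradiction; and no closed orbit is created inside a plug by (ii). Thus $X_H$, a reparametrisation of the characteristic flow on $\{H=1\}$, has no periodic orbit, which is the statement.

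The heart of the matter is the construction of the plug. Its core must carry a flow with no closed orbits that is simultaneously realised by a Hamiltonian characteristic flow; the model, as in the smooth plugs of Wilson and Schweitzer, is built from an aperiodic flow such as the linear flow of irrational slope on $\mathbb{T}^2$ (or a flow with a minimal set). The genuinely new difficulty relative to the classical smooth Seifert counterexamples is that every ingredient has to live inside the rigid symplectic/Hamiltonian world: one realises the thickened torus flow as the characteristic flow on a hypersurface in a twisted (magnetic) cotangent bundle $(T^*\mathbb{T}^2,\omega_{\mathrm{can}}+\pi^*\sigma)$, where the torus flow can be made Hamiltonian and suspended at once, and then stabilises by $\R^{2n-4}$ to embed the symplectic model into $(\R^{2n},\omega_{\mathrm{st}})$. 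This stabilisation is exactly where the hypothesis $2n\ge 6$ enters: the extra two (or more) dimensions are needed to suspend the torus flow, carry the twisting term, and perform the trapping surgery while staying Hamiltonian. In $\R^4$ there is no such room, consistently with the fact that the analogous statement there is open.

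The main obstacle is satisfying (ii) and (iii) at the same time. Trapping requires slowing the flow to a halt near the core, and for a generic stopping profile the set of trapped points, or its boundary, becomes an invariant set carrying a closed orbit — the ``self-plugging'' phenomenon behind the difficulty of the original Seifert conjecture — and the Hamiltonian constraint aggravates this because the stopping profile is not free to be chosen independently of the symplectic geometry. Resolving it requires choosing the aperiodic core so that its non-recurrence persists when coupled to the stopping profile (the irrational torus flow, restricted to the transversals on which trapped orbits accumulate, remains non-recurrent), and performing the stopping and the identification of the two boundary transversals by symplectomorphisms realising the matched-ends condition — which is what the extra dimensions are spent on. Once the plug is in hand, the remaining points — that $1$ is a regular value of the glued $K$ (immediate from the plug data and from $K=x_1$ on the collar), that $K$ extends to a smooth proper $H$ on $\R^{2n}$ by interpolating with the ellipsoid function off a compact set, and that $\{H=1\}$ is then compact with no closed characteristic — are routine, and complete the argument.
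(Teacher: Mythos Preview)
The paper does not prove this theorem: it is quoted from \cite{Ginzburg2} as background in the survey Section~\ref{sec:review}, with no argument supplied beyond the remark that the proof ``is based on a plug construction.'' There is therefore no proof in the paper to compare your proposal against.

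For what it is worth, your outline does follow the architecture of Ginzburg's original argument --- start from the irrational ellipsoid with its $n$ closed characteristics, build a Hamiltonian plug around an aperiodic core, and insert copies along each orbit --- and your identification of $2n\ge 6$ with the room needed to stabilise the plug is correct in spirit. As written, though, it is a plan rather than a proof: the genuinely hard content, namely the construction of a \emph{smooth} Hamiltonian plug with no closed characteristics on its level (which occupies most of \cite{Ginzburg2} and hinges on a specific embedding of an aperiodic model into standard symplectic space), is described but not carried out. Sentences like ``resolving it requires choosing the aperiodic core so that its non-recurrence persists when coupled to the stopping profile'' name the obstacle without overcoming it, and the matched-ends condition in the Hamiltonian category is more delicate than the collar picture suggests. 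None of this is a flaw relative to the paper, which attempts nothing here, but you should be aware that what you have is a roadmap, not an argument.
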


The theorem was independently proved by Herman \cite{Herman} for {$C^2$-Hamiltonian functions}. In dimension $4$, a $C^2$-counterexample is proved in \cite{GinzburgGurel}.

	We conclude from the last two results that for many manifolds, $\mathcal{AP}_H$ is of measure zero but can be non-empty. In \cite{Ginzburg}, the following question is raised:

\begin{Question}
  {Let $M$ be a symplectic manifold of bounded Hofer--Zehnder capacity and $H$ a smooth proper function on $M$. How large can the set $\mathcal{AP}_H$ of regular aperiodic values be?}
\end{Question}

	For a review of the known results concerning this question, see \cite{Ginzburg}. The proof of Theorem \ref{thmginzburg} is based on a plug construction.

	\subsection{Traps and Plugs}
	
By the flow-box theorem, the flow of a non-singular vector field on a $n$-dimensional manifold locally looks like the linear flow, that is: on $D^{n-1} \times [0,1]$ the flow is given by $\Psi_t: (x,s) \to (x,s+t)$, where $t\in \mathbb{R}$ and $D^{n-1}$ denotes a disk of dimension $n-1$.
	
	\begin{definition}
		A \emph{trap} is a smooth vector field on the manifold $D^{n-1} \times [0,1]$ such that
		\begin{enumerate}
			\item the flow of the vector field is given by $\frac{\partial}{\partial t}$ near the boundary of $\partial D \times [0,1]$, where $t$ is the coordinate on $[0,1]$;
			\item there are no periodic orbits contained in $D \times [0,1]$;
			\item the orbit entering at the origin of the disk $D \times \{0\} $ does not leave $D \times [0,1]$ again.
		\end{enumerate}
		If the vector field additionally satisfies \emph{entry-exit matching condition}, that is that the orbit entering at $(x,0)$ leaves at $(x,1)$ for all $x \in D \setminus \{0\}$, then the trap is called a \emph{plug}.
	\end{definition}

	As a result of the flow-box theorem, traps can be introduced to change the local dynamics of a flow of a vector field and  ``trap'' a given orbit. However, the introduction of a trap can change the global dynamical behaviour drastically. A plug additionally asks for matching condition at entry and exit in order not to change the global dynamics of the vector field. The vector field in question often satisfies some geometric properties (as for example volume-preserving, a Reeb vector field, a Hamiltonian vector field,\dots). The crux in the construction of traps and plugs is to produce a vector field satisfying the given geometric constraint. In this paper, we are going to tackle the question of existence of traps and plugs for the $b^m$-Reeb flow.
	
	Traps and plug have been successfully used to construct counter-example in existence theorem for many geometric flows. For instance, Kuperberg constructed a plug in \cite{Kuiperberg} to find a smooth non-singular vector field without periodic orbits on any closed manifold of dimension $3$. The special case of $S^3$ is known as counter-example to the Seifert conjecture.
	In the contact case, by the positive answers of Weinstein conjecture, there cannot exist plugs for the Reeb flow. Furthermore, it is a corollary of a theorem of Eliashberg and Hofer \cite{hofereliashberg} that in dimension $3$, Reeb traps do not exist. The same was conjectured in higher dimension, but Reeb traps were later proved to exist in  dimension higher than $5$, see \cite{GRZ}.

\section{On the Weinstein conjecture for $b^m$-contact manifolds}\label{sec:counterexample}
	
We will see that there are examples of compact $b^m$-contact manifolds without periodic Reeb orbits away from the critical set but that there always exists infinitely many periodic orbits on	a closed critical set in dimension $3$.

\subsection{Existence of infinitely many periodic orbits on the critical set}

As was observed in Proposition \ref{prop:ReebHamdim3}, the Reeb vector field on the critical set is a Hamiltonian vector field in the $3$-dimensional case. This is only true in dimension $3$, which comes from the fact that area forms are symplectic forms on surfaces. This will imply that on the critical set  of closed $b^m$-contact manifolds, there are infinitely many periodic Reeb orbits\footnote{The authors would like to thank Robert Cardona for pointing this out.}.

\begin{proposition}\label{prop:infinitelymanypoZ}
Let $(M,\alpha)$ be a $3$-dimensional $b^m$-contact manifold and assume the critical hypersurface $Z$ to be closed. Then there exists infinitely many periodic Reeb orbits on $Z$.
\end{proposition}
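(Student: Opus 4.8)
The plan is to reduce the statement to the elementary fact that a Hamiltonian vector field on a \emph{closed} surface with non-constant Hamiltonian has infinitely many periodic orbits: one on each connected component of each regular level set. The restriction of the $b^m$-Reeb flow to $Z$ is exactly such a system.

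First I would record the $b^m$-analogue of Proposition \ref{prop:ReebHamdim3}. Writing $\alpha = u\frac{dz}{z^m}+\beta$ via the decomposition lemma (as in the $b^3$-example of Theorem \ref{thm:bcontact3bp}), the same computation as in the $b$-contact case — or \cite{MO} — shows that $\Theta := u\,d\beta+\beta\wedge du$ restricts to an area form on $Z$ and that $R|_Z$ is the Hamiltonian vector field of $u|_Z$ for $\Theta$, i.e.\ $\iota_R\Theta|_Z = du|_Z$. In particular $R$ is tangent to $Z$, so the periodic Reeb orbits contained in $Z$ are precisely the closed orbits of this Hamiltonian system on the closed surface $Z$. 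Next, by (the argument of) Proposition \ref{prop:unonconstant3dim}, applied componentwise, $u$ is non-constant on each connected component of $Z$: if $u$ were constant on a component $Z_0$, then $\Theta|_{Z_0}=u\,d\beta|_{Z_0}$ would be exact on the closed surface $Z_0$, contradicting Stokes' theorem (an area form has nonzero total integral). Fix such a component $Z_0\subseteq Z$ and set $h:=u|_{Z_0}$.

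Now $h\colon Z_0\to\R$ is a non-constant smooth function on a compact connected surface, so $h(Z_0)=[a,b]$ with $a<b$. By Sard's theorem the critical values of $h$ form a set of measure zero, hence $(a,b)$ contains uncountably many regular values $c$. For each such $c$, the level set $h^{-1}(c)$ is non-empty (intermediate value theorem) and is a compact $1$-dimensional submanifold of $Z_0$ without boundary, i.e.\ a finite disjoint union of embedded circles. On any such circle $\gamma$ the Reeb field $R=X_h$ is tangent to $\gamma$ (level sets are flow-invariant) and nowhere vanishing, since at a regular value $\iota_R\Theta=dh\neq 0$ forces $R\neq 0$; a nowhere-vanishing vector field on $S^1$ has periodic flow, so $\gamma$ is the trace of a periodic Reeb orbit contained in $Z$. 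Level sets for distinct values of $c$ are disjoint, so these circles are pairwise distinct periodic Reeb orbits, and there are infinitely many of them.

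The only step that is not completely routine is the first one: verifying that the Hamiltonian description of $R|_Z$ persists from $b$- to $b^m$-contact forms, i.e.\ tracking the effect of the higher-order terms in the $b^m$-decomposition of $\alpha$ and of $d\alpha$ on $Z$. Once this is in place, the conclusion follows at once from Sard's theorem and the classification of compact $1$-manifolds. For $b^{2k+1}$-contact structures $Z$ consists of two connected components, each a closed surface, and the argument applies verbatim to at least one of them.
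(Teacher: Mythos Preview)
Your proof is correct and follows essentially the same approach as the paper: both use that $R|_Z$ is Hamiltonian for $u|_Z$ with respect to the area form $\Theta$ (Proposition~\ref{prop:ReebHamdim3}), that $u|_Z$ is non-constant (Proposition~\ref{prop:unonconstant3dim}), and that regular level sets of $u$ are circles on which $R$ is nonvanishing. Your version is slightly more explicit (invoking Sard and the classification of compact $1$-manifolds, and noting the need to check the $b^m$-analogue of Proposition~\ref{prop:ReebHamdim3}), but the argument is the same.
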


Note that the critical hypersurface $Z$ is closed if there exists a global function defining $Z$ and the ambient manifold $M$ is compact.

\begin{proof}
Let us denote the usual decomposition by $\alpha=u\frac{dz}{z}+\beta$. By Proposition \ref{prop:unonconstant3dim}, the function $u$ is non-constant on $Z$. Furthermore by Proposition \ref{prop:ReebHamdim3}, the Reeb vector field is Hamiltonian on $Z$ for the function $-u$. Let $p\in Z$ be a point such that $du_p\neq 0$. As the preimage of a closed topological set is closed and a closed set of a compact manifold is compact, the level-sets are given by circles and the Reeb vector field, contained in the level-set, is non-vanishing in view of $$\iota_{R_\alpha}(ud\beta+\beta \wedge du)=du.$$
Hence the Reeb vector field is periodic on $u^{-1}(p)$.
\end{proof}

The condition of $M$ to be closed is necessary, as it can be seen in the next example.

    \begin{example}\label{nonweinstein}
    Consider $S^2\times S^1 \setminus \{(p_N,\phi),(p_S,\phi)\}$ where $\phi$ is the angular coordinate on $S^1$ and $(h,\theta)$ are polar coordinates on $S^2$ and $p_N$ (respectively $p_S$) denotes the north pole (respectively south pole). The $b$-form  $\alpha=\frac{d\phi}{\sin \phi}+hd\theta$ is a $b$-contact form whose Reeb vector $R_\alpha=\sin \phi \frac{\partial}{\partial \phi}$ vanishes everywhere on $Z$ and does not admit any periodic Reeb orbits. However this is a non-compact example and in fact, the north and south pole cannot be added to compactify this example as this would yield a trivial Boothby--Wang fibration of $S^2$ over $S^1$.
    \end{example}

    We do not know if a similar result to the one in Proposition \ref{prop:infinitelymanypoZ} holds in higher dimension. The dimension $3$ is particular here because area forms on surfaces are symplectic.

\begin{question}
Let $(M,\alpha)$ be a closed $(2n+1)$-dimensional $b^m$-contact manifold and let $Z$ stand for the critical set. Then there exists infinitely many periodic Reeb orbits on $Z$.
\end{question}

\begin{remark}
{The induced geometry of the $b^m$-contact structure on the critical set is studied in \cite{MO} in the language of Jacobi manifolds. The critical set is foliated by even and odd-dimensional leaves, with an induced locally conformally symplectic structure in the first case (see \cite{MO} for the definition), and a contact structure in the second. In the case where $\dim M=5$ and the contact leaf is compact, it admits at least one periodic Reeb orbit in view of the positive answer to the Weinstein conjecture.}
\end{remark}

\begin{remark}\label{rmk:naivesingularWeinstein}
As in the $3$-dimensional compact case, $b^m$-contact manifolds always admit infinitely many periodic orbits on the critical set, a first \emph{naive} generalization of the Weinstein conjecture would be that there are always have periodic orbits away from the critical set. However this is not true, as we will see in the next subsection.
\end{remark}
	
\subsection{Non-existence of periodic Reeb orbits away from the critical set}

	 We will see that in the presence of singularities in the geometric structure, there are $b^m$-symplectic manifolds that do not admit any periodic orbit of the Hamiltonian vector field away from the critical set. This originates from the fact that the Weinstein conjecture stated as in Remark \ref{rmk:naivesingularWeinstein} does not hold in the singular contact set-up: there are compact $b^m$-contact manifolds $M$ with critical set $Z$ with no periodic Reeb orbits away from $Z$. In particular, we prove that taking the symplectization, there are proper Hamiltonian functions on $b^m$-symplectic manifolds having no periodic orbits for the Hamiltonian flow away from $Z\times \R$.

	\begin{claim}
		There are compact $b^m$-contact manifolds $(M,Z)$ of any dimension for all $m\in \mathbb{N}$ without periodic Reeb orbits on $M\setminus Z$.
	\end{claim}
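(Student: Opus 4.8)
The plan is to construct explicit examples, starting in dimension $3$ and then suspending/stabilizing to higher dimensions. In dimension $3$, I would look for a $b^m$-contact form whose Reeb vector field vanishes identically on $Z$ (so that $Z$ carries no orbits "away from $Z$" trivially, since it *is* $Z$) and whose Reeb flow outside $Z$ is conjugate to an irrational-type flow with no closed orbits, or more simply, whose trajectories outside $Z$ all escape toward $Z$ in infinite time. The model to imitate is Example \ref{nonweinstein}: there $\alpha = \frac{d\phi}{\sin\phi} + h\,d\theta$ on (an open subset of) $S^2\times S^1$ gave $R_\alpha = \sin\phi\,\frac{\partial}{\partial\phi}$, which is non-vanishing off $Z=\{\phi = 0,\pi\}$ but moves purely in the $\phi$-direction, hence has no periodic orbit off $Z$. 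The obstruction there was that the example is non-compact. So the first step is to find a \emph{closed} surface-bundle-type $3$-manifold on which the same phenomenon occurs. A natural candidate: take the mapping torus of a surface diffeomorphism, or better, replace $S^2$ by the $2$-torus and use the $b^m$-contact form modeled on $\alpha = \frac{1}{g(z)}\,dz\wedge(\text{something}) + \dots$ so that the Reeb field is a multiple of $\partial_z$ only. Concretely, on $\mathbb{T}^3$ with coordinates $(z,x,y)$, one tries $\alpha = \frac{dz}{\sin^m(z)}\,u(z) + \beta$ with $\beta$ chosen so that $R_\alpha$ points only in the $\partial_z$ direction (vanishing at $z = 0,\pi$); then every orbit off $Z$ is an interval of a $z$-line, limiting to $Z$, hence never closed, while $Z = \{z=0\}\cup\{z=\pi\}$ is two copies of $\mathbb{T}^2$. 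One must check $\alpha\wedge d\alpha \neq 0$ as a $b^m$-form, which forces a transversality condition on $\beta$ restricted to the $(x,y)$-torus (e.g.\ $\beta = \cos(2\pi x)\,dx + \dots$ arranged to be contact-type on each slice), and verify the $b^m$-structure is of the right order.

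Second, to pass to arbitrary dimension $2n+1$ I would use a suspension/product construction: given the $3$-dimensional example $(M^3,\alpha)$, form $M^3 \times T^{2n-2}$ (or $M^3\times (S^1)^{2(n-1)}$) and take $\alpha' = \alpha + \sum_{i=1}^{n-1}(\cos\theta_i\,dp_i + \sin\theta_i\,dq_i)$ or the standard contact form on the torus factor; more robustly, use a contact connected-sum / fiber-sum with a closed contact manifold $(N^{2n-2},\lambda)$ so that $\alpha'' = \alpha + \lambda$ (pulled back appropriately) is $b^m$-contact on $M^3\times N$. The Reeb vector field of such a sum splits as a sum of the factor Reeb fields \emph{if} one is careful about normalization; the key point is that a periodic orbit of $R_{\alpha''}$ projects to a (possibly constant) periodic orbit of $R_\alpha$ in the $M^3$-factor, and since $R_\alpha$ has no periodic orbit off $Z$, any periodic orbit of $R_{\alpha''}$ off $Z\times N$ would have to be constant in the $M^3$-direction — but $R_\alpha$ is nowhere zero off $Z$, giving a contradiction. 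So the higher-dimensional non-existence follows from the $3$-dimensional one, provided the product/sum construction genuinely yields a $b^m$-contact form and the Reeb field behaves as claimed.

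The main obstacle I expect is \emph{the global topology/compactness in dimension $3$ combined with making $\alpha\wedge d\alpha\neq 0$ hold everywhere as a $b^m$-form}: Example \ref{nonweinstein} shows the naive closed-up version fails (the Boothby--Wang obstruction), so one genuinely needs a cleverer closed $3$-manifold — likely a non-trivial circle (or surface) bundle, or the use of the singularization technique of \cite{MO} applied to a suitable tight/overtwisted contact $3$-manifold with a convex torus, arranged so that the resulting $b^m$-Reeb field degenerates entirely along $Z$ and is gradient-like (non-recurrent) off $Z$. Concretely I would try to realize the $3$-dimensional example as the $b^m$-contact structure produced by Theorem \ref{thm:existenceb2k} (or \ref{thm:existenceb2k+1}) from a contact manifold $(M^3,\xi)$ together with a convex surface $Z$, choosing the contact form and the $b^m$-desingularization parameters so that the honest Reeb field $R_\alpha$ is transverse to the fibers of some fibration $M^3\setminus Z \to S^1$ — which prevents closed orbits — and checking this is compatible with almost-convexity. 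Verifying that such a choice exists and that the resulting $b^m$-form is honestly $b^m$-contact (not just away from $Z$) is the crux; once the $3$-dimensional example is in hand, the stabilization to higher dimensions and the corollary about $b^m$-symplectic manifolds with proper Hamiltonians (via the symplectization $M\times\mathbb{R}$ with Hamiltonian the $\mathbb{R}$-coordinate) are comparatively routine.
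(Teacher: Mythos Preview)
Your proposal has a genuine gap: you are trying to force the Reeb vector field to vanish identically on $Z$ and to point purely in the transverse $\partial_z$-direction off $Z$, and you correctly identify that this is obstructed for closed $Z$ (via Propositions~\ref{prop:ReebHamdim3} and~\ref{prop:unonconstant3dim}, the same Boothby--Wang phenomenon you flag). But you never actually get around this obstruction --- the proposed fixes (non-trivial bundles, singularization of a convex surface ``arranged so that the Reeb field is gradient-like off $Z$'') remain programmatic and do not produce a single checked example.

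The paper's argument is far more direct, and the key point you are missing is that \emph{you do not need $R_\alpha$ to vanish on $Z$}. The paper simply takes the explicit $b$-contact forms on $S^3$ and $\mathbb{T}^3$ already introduced in Examples~\ref{example:S3} and~\ref{example:3torus} and computes their Reeb fields. On $(S^3,\iota_X\omega)$ one gets
\[
R_\alpha = 2x_1^2\,\partial_{x_1} - x_1 y_1\,\partial_{y_1} + 2x_2\,\partial_{y_2} - 2y_2\,\partial_{x_2},
\]
which decouples into a rotation in the $(x_2,y_2)$-plane and a system in the $(x_1,y_1)$-plane with $\dot x_1 = 2x_1^2$; since $x_1$ is strictly monotone off $\{x_1=0\}=Z$, no orbit off $Z$ can close up. So the Reeb field is \emph{nonzero} on $Z$ (indeed periodic there, as Proposition~\ref{prop:infinitelymanypoZ} demands), yet the monotone transverse drift kills periodicity on $M\setminus Z$. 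The $\mathbb{T}^3$ example has the same flavour. Higher $m$ is obtained by replacing $\frac{dx_1}{x_1}$ by $\frac{dx_1}{x_1^{2k+1}}$ in the ambient $b^m$-symplectic $\R^4$; the paper does not spell out the higher-dimensional case via a product, so your stabilization idea is not what is done here (and note that $\alpha+\lambda$ on a product is not contact in general --- you would need the Bourgeois-type construction or simply repeat the Liouville argument for $S^{2n+1}\subset\R^{2n+2}$).
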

	
	In what follows several examples where the Weinstein conjecture is not satisfied are given thus proving the claim. The first example is given by Example \ref{example:S3}.
	
	\begin{example}\label{Ex:S3nopo}
	Consider the example of the $3$-sphere in the standard $b$-symplectic Euclidean space $(\R^4,\omega)$ as in Example \ref{example:S3}. The $b$-contact form is given by $\alpha=\iota_X \omega$ where $X$ is the Liouville vector field transverse to $S^3$ given by $X=\frac{1}{2}x_1\frac{\partial}{\partial x_1}+y_1\frac{\partial}{\partial y_1}+\frac{1}{2}(x_2 \frac{\partial}{\partial x_2}+y_2\frac{\partial }{\partial y_2})$. The Reeb vector field is given by
	$$R_\alpha=2x_1^2\frac{\partial}{\partial x_1}-x_1y_1\frac{\partial}{\partial y_1}+2x_2\frac{\partial }{\partial y_2}-2y_2\frac{\partial}{\partial x_2}.$$
	On the critical set, given by $S^2$, this vector field is giving rise to rotation. Away from $Z$, the Reeb vector field does not admit any periodic orbits. Indeed, the vector field can be interpreted as two uncoupled systems in the $(x_1,y_1)$, respectively $(x_2,y_2)$-plane. The flow in the $(x_1,y_1)$-plane is clearly not periodic. 
	
	
	This example can be generalized to $b^{2k+1}$-contact forms for any $k\geq 1$ by considering $(\R^4,\omega_\text{st}= \frac{dx_1}{x_1^{2k+1}}\wedge dy_1+dx_2\wedge dy_2)$ and the Liouville vector field given by $X= \frac{1}{2}x_1^{2k+1}\frac{\partial}{\partial x_1}+y_1\frac{\partial}{\partial y_1}+\frac{1}{2}(x_2\frac{\partial}{\partial x_2}+y_2\frac{\partial}{\partial y_2})$ that is transverse to $S^3$ and hence $\alpha=\iota_X\omega$ is a $b^m$-contact form. The associated Reeb vector field does not admit any periodic orbits. The restriction on the parity comes from the fact that transversality of a similar Liouville vector field with respect to $S^3$ fails.
	\end{example}

    The next example is given by the $3$-torus, as in Example \ref{example:3torus}.
    \begin{example}\label{Ex:T3nopo}
    Consider $(\mathbb{T}^3,\sin \phi \frac{dx}{\sin x}+\cos{\phi}dy)$. The Reeb vector field is given by $R_\alpha=\sin \phi \sin{x}\frac{\partial}{\partial x}+\cos{\phi}\frac{\partial}{\partial y}$. The critical set is given by two disjoint copies of the $2$-torus $\mathbb{T}^2$ and the Reeb flow restricted to it is given by $\cos \phi \frac{\partial}{\partial y}$. As in the last example, the critical set $Z$ is given by periodic orbits (except when $\cos \phi=0$, where the Reeb vector field is singular). However, away from $Z$ there are no periodic orbits.

This example can be generalized to higher order singularities.
    \end{example}

     Armed with these examples we conclude that there are examples of $b^m$-symplectic manifolds without  periodic orbits of the Hamiltonian flow  away from the critical hypersurface.
	
\begin{claim}
    {There are $b^m$-symplectic manifolds with proper smooth Hamiltonian whose Hamiltonian flow does not have any periodic orbits away from $Z$.}
\end{claim}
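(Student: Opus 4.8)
The plan is to produce the required $b^m$-symplectic manifold as the \emph{symplectization} of one of the compact $b^m$-contact manifolds supplied by the previous claim, and to take as Hamiltonian the linear coordinate on the symplectizing factor. Concretely, fix a compact $b^m$-contact manifold $(M^{2n+1},\alpha)$ with critical set $Z$ and no periodic Reeb orbit on $M\setminus Z$ — such manifolds exist in every dimension by the claim above, e.g. the sphere of Example~\ref{Ex:S3nopo} or the torus of Example~\ref{Ex:T3nopo} — and set $W=M\times\R$, regarded as a $b^m$-manifold with critical hypersurface $Z\times\R$, equipped with the $b^m$-form $\omega=d(e^t\alpha)$, where $t$ is the coordinate on $\R$.

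First I would check that $\omega$ is $b^m$-symplectic. Writing $\omega=e^t\,dt\wedge\alpha+e^t\,d\alpha$ and using $(dt\wedge\alpha)\wedge(dt\wedge\alpha)=0$ together with $(d\alpha)^{n+1}=0$ — a $(2n+2)$-form on the $(2n+1)$-dimensional $M$ vanishes — the binomial expansion of $\omega^{n+1}$ collapses to the single surviving term
\[
\omega^{n+1}=(n+1)\,e^{(n+1)t}\,dt\wedge\alpha\wedge(d\alpha)^n ,
\]
which is a nowhere-vanishing section of $\Lambda^{2n+2}({^b}T^*W)$ precisely because $\alpha\wedge(d\alpha)^n\neq 0$. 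This is exactly the $b^m$-version, already recalled in the preliminaries, of the fact that the symplectization of a $b^m$-contact manifold is $b^m$-symplectic, with Liouville field $\partial_t$; note $\omega$ restricts to an honest symplectic form on $W\setminus(Z\times\R)=(M\setminus Z)\times\R$.

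Next take $H:=t\colon W\to\R$. Since $M$ is compact, $H$ is proper, as $H^{-1}([a,b])=M\times[a,b]$; moreover $H$ has no critical points, so every level set $\{H=c\}=M\times\{c\}$ is a regular hypersurface diffeomorphic to $M$. Because $H$ is a first integral of $X_H$, every periodic orbit of $X_H$ lies in some $M\times\{c\}$. On the symplectic part, the Hamiltonian equation $\iota_{X_H}\omega=dH$ is solved by $X_H=-e^{-t}R_\alpha$: indeed $\iota_{R_\alpha}\big(e^t\,dt\wedge\alpha+e^t\,d\alpha\big)=-e^t\,dt$ using $\alpha(R_\alpha)=1$, $dt(R_\alpha)=0$ and $\iota_{R_\alpha}d\alpha=0$, and $\omega$ is non-degenerate so $X_H$ is unique (the sign is convention-dependent and irrelevant below). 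Hence on each $M\times\{c\}$, away from $Z$, the field $X_H$ is the reparametrization $\mp e^{-c}R_\alpha$ of the Reeb field, which by hypothesis has no periodic orbit on $M\setminus Z$. Therefore $X_H$ has no periodic orbit on $W\setminus(Z\times\R)$, which proves the claim.

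The construction is essentially soft, so there is no serious obstacle: the only step requiring genuine care is the first one — verifying that the candidate $b^m$-form really is $b^m$-symplectic (that its top $b^m$-power is nowhere zero) and that its critical set is exactly $Z\times\R$ — after which everything reduces, on $W\setminus(Z\times\R)$, to the classical symplectization picture and the bookkeeping of first integrals. I would close by remarking that this yields a $b^m$-symplectic analogue of the failure of the Hamiltonian Seifert conjecture (cf. Theorem~\ref{thmginzburg}); in fact a stronger pathology, since \emph{every} regular level set away from $Z\times\R$ is devoid of periodic orbits rather than just one, the price being non-compactness of $W$ and the concentration of the phenomenon near the critical hypersurface.
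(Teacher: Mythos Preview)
Your argument is correct and follows essentially the same approach as the paper: take the symplectization of a compact $b^m$-contact manifold without periodic Reeb orbits away from $Z$, and use a function of the symplectization coordinate as the Hamiltonian so that $X_H$ is a reparametrization of the Reeb field on each level set. The only cosmetic difference is that the paper takes $H=e^t$ (giving $X_H=\pm R_\alpha$ directly) while you take $H=t$ (giving $X_H=\pm e^{-t}R_\alpha$); either choice is proper and yields the same conclusion.
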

	
	To see this, let $(M,\alpha)$ be a compact $b^m$-contact manifold without any periodic Reeb orbits away from the critical set and consider in its symplectization the Hamiltonian function $e^t$ (where $t$ is the coordinate in the symplectization). The Hamiltonian vector field is a reparametrization of the Reeb vector field on the level-sets and therefore provides an example of a proper smooth Hamiltonian containing no periodic orbits in the level-sets away from the critical hypersurface.

	We define the set-of aperiodic values for a $b$-symplectic manifold $(M,\omega)$ and a Hamiltonian $H\in C^\infty(M)$ as follows
$$ {^b}\mathcal{AP}_H := \{a\in \R| X_H \text{ does not admit any periodic orbits on } H^{-1}(a) \text{ away from } Z \}.$$

The corollary can be reformulated: on $b$-symplectic manifolds there is a proper smooth Hamiltonian such that  ${^b}\mathcal{AP}_H=\R$.

	 This is in stark contrast to the almost-existence theorem (Theorem \ref{almostexistence}) in symplectic geometry. To the authors knowledge, there are no known examples of Hamiltonian having no periodic orbits on all level-sets (or equivalently having $\mathcal{AP}_H=\R$).


We do not know if there are compact examples without any periodic Reeb orbits, neither away from $Z$ nor on $Z$.

    \begin{Question}
    {Are there compact $b^m$-contact manifolds  without periodic Reeb orbits neither on $Z$ nor away from $Z$?}
    \end{Question}

\begin{remark}\label{keypoint}
A $1$-dimensional closed $b$-contact manifold without any non-trivial periodic Reeb orbits is given by $(S^1,\frac{d\phi}{\sin \phi})$. In this example the singularity is transferred from the contact form to the orbit as marked points on the circle are declared as zeroes of the vector field. There are topological obstructions to generalizing this example to higher dimensions using circle actions and transferring the singularity. This follows from the remark in the Example \ref{nonweinstein}: Higher dimensional examples without any non-trivial periodic Reeb orbits cannot come from a $S^1$-action, as this would yield a trivial Boothby-Wang fibration. We still believe that enlarging the class of contact structures to $b^m$-contact structures and thereby admitting possible vanishing points of the Reeb vector field allows to construct such examples and thus a counterexample to the smooth Weinstein conjecture for $b^m$-contact structures.
\end{remark}


As mentioned before, by the positive results on the Weinstein conjecture, Reeb plugs cannot exist. However, the compact counterexamples in any dimension in the $b^m$-contact case raise the following question:
	
	\begin{Question}
	    {Are there plugs for $b^m$-Reeb flows?}
	\end{Question}

More precisely,  given a contact manifold, can the singularization  be used to change the contact structure by a $b^m$-contact structure and thereby controlling the changed Reeb dynamics to destroy a given periodic for the initial flow? A guideline example is Example \ref{Ex:S3nopo}, where the critical set $Z$ is given by a $2$-sphere and there are no periodic orbits away from $Z$. Of course, due to Proposition \ref{prop:infinitelymanypoZ}, when $Z$ is closed, there are always infinitely many periodic orbits on the critial set in dimension $3$.

In the following section, we give a first approach towards understanding the existence of plugs for the $b^m$-Reeb flow. We will see that spheres can be inserted as hypersurfaces in local Darboux charts. The dynamics on those spheres is given as in Example \ref{Ex:S3nopo} and a given orbit entering the Darboux charge is being captured as it approaches one of the fixed points of the sphere. This will yield the existence of traps for $b^m$-Reeb fields.

 \section{On the existence of traps and plugs for $b^m$-contact manifolds}\label{sec:trap}

 As mentioned before, traps for Reeb flows do not exist in dimension $3$, see \cite{hofereliashberg}, but do exist in higher dimensions \cite{GRZ}. We prove that in the $b$-category, there is no restriction on the dimension.

	 \begin{theorem}
	 There exist $b^m$-contact traps in any dimension.
	 \end{theorem}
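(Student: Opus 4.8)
The plan is to construct a $b^m$-contact trap by inserting a singular hypersurface inside a flow-box and using the dynamics of Example \ref{Ex:S3nopo} to capture an orbit. First I would work in $D^{2n} \times [0,1]$ with the model Reeb flow $\partial/\partial t$, realized as the Reeb flow of a contact form $\alpha_0$ in a Darboux chart. Inside this chart I would place a small round sphere $S^{2n-1}$ (of the appropriate dimension) transverse to the flow near the entry face. The crux is then to \emph{singularize} the contact form along this sphere: by Theorems \ref{thm:existenceb2k} and \ref{thm:existenceb2k+1}, any convex hypersurface in a contact manifold can be turned into the critical set of a $b^{2k}$- or $b^{2k+1}$-contact structure, so I would check that our inserted sphere is convex (it carries the standard tight contact structure on $S^{2n-1}$, whose characteristic foliation has the required dividing set), and then apply the singularization to obtain a $b^m$-contact form $\alpha$ on $D^{2n}\times[0,1]$ that agrees with $\alpha_0$ away from a neighbourhood of the sphere.

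Next I would analyze the resulting $b^m$-Reeb vector field $R_\alpha$. By the local normal form near the critical set (the $b^m$-Darboux theorem of \cite{MO}) combined with the explicit computation in Example \ref{Ex:S3nopo}, the dynamics on the inserted sphere $Z$ is that of the Reeb field $R_\alpha = 2x_1^2\partial_{x_1} - x_1 y_1 \partial_{y_1} + 2x_2\partial_{y_2} - 2y_2\partial_{x_2}$ (suitably generalized): it has exactly two zeros (the ``north'' and ``south'' poles where $x_1 = y_1 = 0$) and no periodic orbits on $M \setminus Z$ inside the chart. The orbit entering at the origin of $D^{2n}\times\{0\}$ is arranged to limit onto one of these zeros, so it never exits — this gives property (3) of a trap. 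Property (1), that the flow is $\partial/\partial t$ near $\partial D^{2n}\times[0,1]$, holds because the singularization is localized away from the boundary. Property (2), the absence of periodic orbits in $D^{2n}\times[0,1]$, requires checking that no new periodic orbit is created: away from $Z$ the flow is a small perturbation of $\partial/\partial t$ (no periodic orbits, since orbits travel monotonically in $t$ outside the insertion region), on $Z$ the flow has no periodic orbits by the Example \ref{Ex:S3nopo} computation, and orbits that enter the insertion region either get trapped at a zero or pass through monotonically in $t$.

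I expect the main obstacle to be the gluing and the verification of property (2). One has to ensure that the singularization can be performed in a way compatible with the flow-box structure — i.e. that the $b^m$-contact form produced by Theorems \ref{thm:existenceb2k}--\ref{thm:existenceb2k+1} can be taken to equal $\alpha_0 = dt + \lambda_{\mathrm{std}}$ identically outside a compact subset of the interior, so that the flow is genuinely linear near $\partial(D^{2n}\times[0,1])$. This is plausible because the convex-hypersurface-to-critical-set construction is local near the hypersurface, but it needs care at the level of the contact form (not just the contact structure), possibly using a Moser/Gray-stability argument to interpolate. The second delicate point is ruling out periodic orbits that wind through the Darboux chart while staying inside $D^{2n}\times[0,1]$ for all time: here I would use that the ``$t$-component'' of $R_\alpha$ (suitably normalized) is strictly positive off the insertion region and off $Z$, forcing every non-trapped orbit to increase $t$ and eventually leave. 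The parity issue ($b^{2k}$ versus $b^{2k+1}$) is handled exactly as in Example \ref{Ex:S3nopo}: for odd $m$ the critical set $Z$ is two copies of the sphere, and the same trapping argument applies with the orbit captured at a zero on one of the two components.
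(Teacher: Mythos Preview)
Your overall strategy---insert a round sphere as a convex hypersurface in a Darboux chart, singularize along it via Theorems~\ref{thm:existenceb2k}--\ref{thm:existenceb2k+1}, and show the central orbit limits to a fixed point on the critical set---is exactly the paper's approach. The paper, however, carries it out by an explicit coordinate change: it writes $\alpha_{\mathrm{st}}=e^{2t}(2\xi\,dt+d\xi+d\theta)$ in coordinates adapted to the contact vector field $X=2z\,\partial_z+r\,\partial_r$, replaces $dt$ by $df_\epsilon$, and reads off the singularized Reeb field as $R_{\alpha_\epsilon}=\tfrac{f_\epsilon'-1}{f_\epsilon'}e^{-2t}\partial_\theta+\tfrac{1}{f_\epsilon'}e^{-2t}\partial_\xi$. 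This one formula simultaneously verifies that the form equals $\alpha_{\mathrm{st}}$ outside the $\epsilon$-neighbourhood (where $f_\epsilon'=1$), that the orbit at $\theta=0$ is trapped at a pole of $S^2$, and that no periodic orbits appear off $Z$ (the $\partial_\xi$-component never vanishes for $t\neq 0$). Your conceptual route is fine in spirit, but the explicit computation is what buys you the gluing and the no-periodic-orbit check simultaneously, bypassing the Moser/Gray interpolation you worried about.

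That said, your proposal contains a few genuine confusions you should clean up. First, a codimension-one sphere in a $(2n{+}1)$-manifold is $S^{2n}$, not $S^{2n-1}$, and it is \emph{not} transverse to the Reeb flow $\partial_t$ (it is tangent along its equator); the relevant notion is convexity, i.e.\ transversality of a \emph{contact} vector field, which the paper exhibits explicitly. A convex hypersurface is even-dimensional and carries a characteristic foliation, not a contact structure. Most importantly, your claim that ``on $Z$ the flow has no periodic orbits by Example~\ref{Ex:S3nopo}'' is false: in that example, and in the paper's trap, the Reeb field on $Z=S^2$ is rotation $\partial_\theta$, so $Z$ is foliated by periodic orbits away from the two poles. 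This is unavoidable by Proposition~\ref{prop:infinitelymanypoZ}. The paper's implicit convention is that a $b^m$-trap forbids periodic orbits \emph{away from} $Z$; you should make this explicit rather than assert something contradicted by the very example you cite.
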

	
	 \begin{proof}
	 We only consider the $3$-dimensional case, the higher dimensional being similar. Consider a Darboux ball and denote the standard contact form by $\alpha_{st}$. For convenience, we work in polar coordinates, in which $\alpha_{st}$ writes down as $\alpha_{st}=dz+r^2d\theta$. The Reeb vector field is given by $R_\alpha=\frac{\partial}{\partial z}$.
	
	 We introduce a convex hypersurface and use the existence result, Theorem \ref{thm:existenceb2k}. The vector field $X=2z\frac{\partial}{\partial z}+r\frac{\partial}{\partial r}$ is a contact vector field as $\mathcal{L}_X\alpha=2\alpha$ and is transverse to the $2$-sphere $S^2$. Hence $S^2$ is a convex hypersurface and can be realized as critical set of a $b^{2k}$-contact manifold. More precisely, introducing in a neighbourhood around $S^2$ the coordinate $t$ such that $X=\frac{\partial}{\partial t}$, the contact form writes in the Giroux decomposition as follows:
	 $$\alpha=g(udt+\beta)$$
	 where $u\in C^\infty(S^2)$, $\beta\in \Omega^1(S^2)$ and $g$ is a smooth function. Note that $u$ and $\beta$ are independent of the $t$-coordinate, whereas $g$ is not. The $b^{2k}$-form is given by
	 $$\alpha_\epsilon=g(udf_\epsilon+\beta).$$
	 In order to compute the Reeb dynamics, let us explicitely compute the functions $u,g$ and the $1$-form $\beta$.
	 We introduce the following change of variables:
	 $$\begin{cases}
	 \frac{\partial}{\partial t}= 2z\frac{\partial}{\partial z}+r\frac{\partial}{\partial r} \\
	 \frac{\partial}{\partial \xi}=r^2 \frac{\partial}{\partial z}.
	 \end{cases}$$
	 The dual basis is given by
	$$\begin{cases}
	dt=\frac{1}{r}dr \\
	d\xi=\frac{1}{r^2}dz-\frac{2z}{r^3}dr.
    \end{cases}$$
    Hence $r=e^t$ and $z=e^{2t}\xi$.
    Under this change of variable, the Giroux decomposition of the standard contact form is given by
    $$\alpha_{st}=e^{2t}(2\xi dt+d\xi+d\theta).$$
    The $b^{2k}$-contact form is given by
     $$\alpha_\epsilon=e^{2t}(2\xi df_\epsilon+d\xi+d\theta)$$ and a direct computation yields that the Reeb vector field associated to $\alpha_\epsilon$ is given by
     $$R_{\alpha_\epsilon}=\frac{f_\epsilon'-1}{f_\epsilon'}e^{-2t}\frac{\partial}{\partial \theta}+\frac{1}{f_\epsilon'}e^{-2t}\frac{\partial}{\partial \xi}.$$
	 Close to the singularity, $f_\epsilon'=\frac{1}{t^2}$ so that on the critical set $S^2=\{t=0\}$, the Reeb dynamics is given by $R=\frac{\partial}{\partial \theta}$. Furthermore, it follows from the formulas that the orbit entering the Darboux ball at $\theta=0$ limits to the fixed point on $S^2$ and hence is trapped. See Figure \ref{fig:trap}, where the dynamics around $Z=S^2$ and the trapped orbit is depicted.

\begin{center}

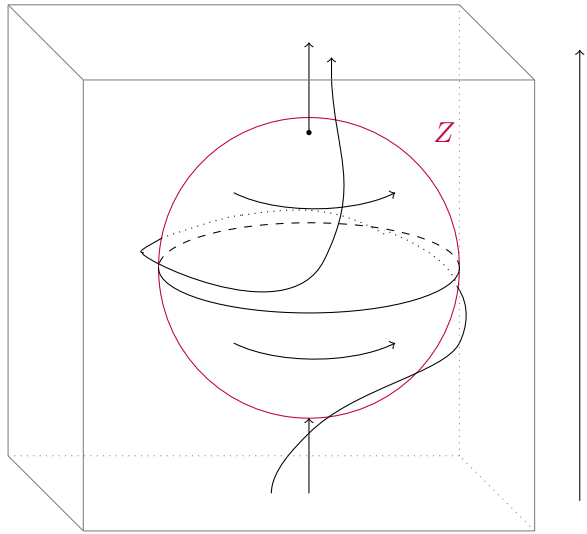
\begin{figure}[hbt!]

\begin{tikzpicture}


  \draw[purple] (0,0) circle (2cm);

  \draw[purple] (1.8,1.8) node {$Z$};


  \draw (-2,0) arc (180:360:2 and 0.6);

  \draw[dashed] (2,0) arc (0:180:2 and 0.6);


  \draw [<-] (0,-2) -- (0,-3);

  \draw [->] (0,1.8) -- (0,3);

  \fill[fill=black] (0,1.8) circle (1pt);

  \draw [->] (-1,-1) arc (220:320:1.4 and 0.6);

  \draw [->] (-1,1) arc (220:320:1.4 and 0.6);


  \draw (-0.5,-3) ..controls +(0,0.3) and +(-0.2,-0.2).. (0,-2.2);

  \draw (0,-2.2) ..controls +(0.6,0.6) and +(-0.2,-0.4).. (2,-1);

  \draw (2,-1) ..controls +(0.1,0.2) and +(0.18,-0.25).. (1.97,-0.25);

  \draw [dotted](1.97,-0.25) ..controls +(-0.08,0.4) and +(0.15,-0.05).. (1,0.5);


  \draw [dotted] (1,0.45) ..controls +(-0.7,0.4) and +(0.7,0.1).. (-0.9,0.7);

   \draw[dotted] (-0.9,0.7) ..controls +(-0.2,0) and +(0,0.0005).. (-1.95,0.4);

	\draw (-1.95,0.4) ..controls +(-0.02,-0.02) and +(-0.15,0).. (-2.2,0.2);



  \draw (-2.2,0.2) ..controls +(-0.2,0) and +(-0.5,-1).. (0.2,0.1);

   \draw  (0.2,0.1) ..controls +(0.5,1) and +(0.05,-1).. (0.3,2.5);

   \draw[->] (0.3,2.5) -- (0.3,2.8);















\begin{scope}[shift={(0,-0.5)}]

	\draw[gray] (-3,-3) -- (-3,3);

	\draw[gray] (-3,-3) -- (3,-3);

	\draw[gray] (3,-3) -- (3,3);

	\draw[gray] (-3,3) -- (3,3);

	\draw[gray] (-3,-3) -- (-4,-2);

	\draw[gray,dotted] (3,-3) -- (2,-2);

	\draw[gray] (-3,3) -- (-4,4);

	\draw[gray] (3,3) -- (2,4);

	\draw[gray] (-4,-2) -- (-4,4);

	\draw[gray] (-4,4) -- (2,4);

	\draw[gray,dotted] (2,4) -- (2,-2);

	\draw[gray,dotted] (2,-2) -- (-4,-2);

	\draw[->] (3.6,-2.6) -- (3.6,3.4);

\end{scope}

\end{tikzpicture}
\caption{The $b^{2k}$-contact trap. The critical set $Z$ is given by $S^2$. Outside of the $\epsilon$-neighbourhood the dynamics is unchanged. One Reeb orbit is trapped. One orbit is depicted that does not satisfy the entry-exit condition.}
\label{fig:trap}
\end{figure}
\end{center}

	 A similar proof holds for the case of $b^{2k+1}$-contact structures, where there are two disjoint copies of $S^2$ being identified as critical set to bypass the orientation issues.
		 \end{proof}

	 Plugs for the Reeb flow cannot exist by the positive answer to Weinstein conjecture. In the case of $b^m$-contact manifolds, we exhibited examples in the last section of compact $b$-contact manifolds without any periodic Reeb orbits away from $Z$. Although we were only able to construct a trap for the Reeb flow on $b^m$-contact manifolds, we strongly believe plugs exist in the singular context. In the last proof, due to the term in $\frac{\partial}{\partial \theta}$, the entry-exit condition is not satisfied, meaning that this construction only yields a trap and not a plug.
	
	 Note that this construction is narrowly related to Example \ref{Ex:S3nopo}. The dynamics on $Z$ of this example and of the construction in the last proof are the same. In contrast to the trap construction, there are no periodic orbits for the example on $S^3$. This corroborates our view on the existence of plugs for  $b^m$-Reeb flows.

\section{Overtwisted singular contact manifolds and non-compact contact manifolds}\label{sec:bmhofer}

While earlier sections dealt with the non-existence of periodic Reeb orbits away from the critical set, we will now see a sufficient condition to guarantee the existence of periodic orbits away from the critical set. The techniques are based on Hofer's proof of the Weinstein conjecture for overtwisted contact manifolds, recalled in Subsection \ref{subsec:reviewweinstein}.

In this section, we consider $b^m$-contact manifolds that have an overtwisted disk away from the critical set. In particular, we consider the manifold in this section to be of dimension $3$. For the definition of higher dimensional overtwisted contact manifolds see \cite{niederkruger,bem}.

\begin{definition}\label{def:bovertwisted}
We say that a $b^m$-contact manifold is overtwisted if there exists an overtwisted disk away from the critical hypersurface $Z$.
\end{definition}

We will prove that in the case of overtwisted $b^m$-contact manifolds Weinstein conjecture holds, under the supplementary condition that $\alpha$ is $\R^+$-invariant around the critical hypersurface. Before rigorously definining this $\R^+$-invariance, let us first state the main theorem of this section. We will prove the following:

	\begin{theorem}\label{thm:bot}
	  Let $(M,\alpha)$ be a closed $b^m$-contact manifold with critical set $Z$. Assume there exists an overtwisted disk in $M \setminus Z$ and assume that $\alpha$ is $\R^+$-invariant in a tubular neighbourhood around $Z$. Then there exists
	  \begin{enumerate}
	      \item   a periodic Reeb orbit in $M \setminus Z$ or
	      \item a family of periodic Reeb orbits approaching the critical set $Z$.
	  \end{enumerate}
	  Furthermore, the periodic orbits are contractible loops in the symplectization.
	\end{theorem}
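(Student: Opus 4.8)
The plan is to adapt Hofer's bubbling-off argument (Theorems~\ref{thm:finiteenergy}--\ref{thm:bishop}, Lemma~\ref{lem:transverse}) to the symplectization of a $b^m$-contact manifold, carefully controlling where the compactness failure of the Bishop family occurs. Away from $Z$ the $b^m$-contact form is an honest contact form, so the overtwisted disk $D_{OT}\subset M\setminus Z$ produces, after choosing a compatible $J$ on the symplectization $(\mathbb{R}\times M, d(e^t\alpha))$ that agrees with the $\mathbb{R}^+$-invariant structure near $Z\times\mathbb{R}$, a Bishop family $\{\u_t\}_{t\in[0,\epsilon)}$ emanating from the elliptic singularity $e$, exactly as in Theorem~\ref{thm:bishop}. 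The first step is to run the standard extension argument: either the gradients of $\u_t$ stay uniformly bounded on $[0,T]$ and the family extends, contradicting transversality with the characteristic foliation (Lemma~\ref{lem:transverse}) via the maximum principle, or the gradient blows up. Boundary blow-up (disk bubbling) is excluded as in the closed case because the boundary stays on $D_{OT}^*\subset M\setminus Z$, so the interior blow-up produces, after rescaling à la Hofer--Sachs--Uhlenbeck, a non-constant finite-energy plane $v:\mathbb{C}\to\mathbb{R}\times M$.

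The genuinely new point is that $M$ is closed but the relevant dynamics may escape toward $Z$: a priori the rescaled sequence could drift off so that the limit plane is not supported in a compact piece of $\mathbb{R}\times M$, or the bubble could ``sit on $Z$'' where the contact condition degenerates. Here I would exploit the $\mathbb{R}^+$-invariance hypothesis. Let $\mathcal{N}(Z)$ be the tubular neighbourhood on which $\alpha$ is $\mathbb{R}^+$-invariant; on $\mathcal{N}(Z)\setminus Z$ this invariance means there is a free local $\mathbb{R}^+$-action (translation in the normal log-coordinate) under which $\alpha$ — hence $J$ on the symplectization — is equivariant. I distinguish two cases according to where the blow-up points $z_k$ (in the domain) map to. \textbf{Case 1:} along a subsequence the images $\u_{t_k}(z_k)$ stay a definite distance from $Z$; then the rescaled curves converge in $C^\infty_{loc}$ on a fixed compact region of $\mathbb{R}\times(M\setminus Z)$ to a non-constant finite-energy plane, and Theorem~\ref{thm:finiteenergy} gives a periodic Reeb orbit in $M\setminus Z$, which is case~(1) of the theorem. \textbf{Case 2:} $\u_{t_k}(z_k)\to Z$. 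Then I use the local $\mathbb{R}^+$-translations to recentre: translate each bubble in the invariant direction so that its (suitably normalized) image sits at a fixed distance from $Z$ inside $\mathcal{N}(Z)$. By equivariance of $J$ the translated maps are still $J$-holomorphic with the same energy, and the uniform geometry of the $\mathbb{R}^+$-invariant neighbourhood yields the gradient bounds needed for Gromov-type compactness; the limit is a non-constant finite-energy plane whose image lies in $\mathbb{R}\times\mathcal{N}(Z)$. Its asymptotic Reeb orbit lies in $\mathcal{N}(Z)$, and by pushing the recentring parameter one obtains a whole family of such periodic orbits, one in each level of the invariant neighbourhood accumulating on $Z$ — this is case~(2).

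I would close by noting that in both cases the finite-energy plane is, by construction, the limit of disks $\u_t$ with boundary a contractible loop (winding number $1$ around $e$) shrinking to the point $e$; hence the resulting periodic orbit bounds a disk in the symplectization, i.e.\ is contractible there, giving the last sentence of the statement. The main obstacle I anticipate is \textbf{Case 2}: one must verify that the $\mathbb{R}^+$-invariance genuinely provides the uniform bounds (on the modulus of continuity / gradient of the translated curves, and on the energy of the limit being positive) that replace the compactness of $M$ in Hofer's original argument, and that the recentred limit does not itself degenerate into a constant or slide entirely onto $Z$. Controlling the energy from below under the translations — showing no energy is lost to ``infinity in the invariant direction'' — is the delicate quantitative step, and is exactly where the precise form of the $\mathbb{R}^+$-invariance (as opposed to mere asymptotic cylindrical behaviour) is used.
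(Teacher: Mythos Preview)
Your proposal is correct and follows essentially the same route as the paper: reduce to the open contact manifold $M\setminus Z$ with $\mathbb{R}^+$-invariant ends, run Hofer's Bishop-family argument, exclude boundary bubbling, and split the interior blow-up into the two cases (bubble stays in a fixed compact piece versus escapes toward $Z$), using translation in the $\mathbb{R}^+$-direction in the second case to recover Arzel\`a--Ascoli compactness and produce a finite-energy plane whose asymptotic orbit, pushed by the action, yields the $1$-parameter family accumulating on $Z$. The one point the paper handles that you only flag is the uniform lower bound on the $d\alpha$-energy of the Bishop family (their Proposition~\ref{lem:lowerboundenergy}): this is proved by the same translation trick, and it is needed to guarantee that the limiting plane in Case~2 is non-constant --- exactly the ``delicate quantitative step'' you anticipate.
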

	
	\begin{remark}
	The condition of $\alpha\in {^{b^m}}\Omega^1(M)$ to be $\R^+$-invariant is a non-trivial condition, as is pointed out in Remark \ref{rmk:Rinvariance}.
	\end{remark}
	
	The proof is based on Hofer's original arguments. The novelty here is that we work in a non-compact set-up, namely on the open manifold $M\setminus Z$. On open manifolds, Hofer's method generally does not apply. However the openness is \emph{gentle} due to the $\R^+$-action. We will see that this theorem is a corollary of a more general statement: in fact, we do not need the geometric structure to be $b^m$-contact, we only need a contact form on an open manifold that is $\R^+$-invariant in the open ends of the manifold and overtwisted away from the $\R^+$-invariant part, see Theorem \ref{thm:R^+invariantHofer}.
	
Under those assumption, in the $\R^+$-invariant part of the manifold, the decomposes as the union of products of the connected components of the compact boundary with $\R^+$. We will see that in this decomposition pseudoholomorphic curves can be translated in the $\R^+$-direction and the compactness of the boundary of compact set guarantees convergence. With this in mind, we define the following:

\begin{definition}\label{def:R^+invariantcontact}
Let $\alpha\in \Omega^1(M)$ be a contact form on an open manifold $M$. We say that $\alpha$ is \emph{$\R^+$-invariant} in the open ends of $M$ if there exists a compact set $K\subset M$ and a vector field $X$ defined on $M \setminus K$ that satisfies $\mathcal{L}_X \alpha=0$, meaning that $X$ is a strict contact vector field and such that $X$ is transverse to $\partial K$. We say that an $\R^+$-invariant contact form $\alpha$ is overtwisted if there is an overtwisted disk  contained in $K$.
\end{definition}

\begin{remark}\label{rmk:Rinvariance}
In the light of Definition \ref{def:R^+invariantcontact}, we will view $b^m$-contact manifolds as open contact manifolds by considering the manifold without the critical set. The $\R^+$-invariance then translates into the fact that the $b^m$-contact form admits a decomposition given by
$\alpha=u\frac{dz}{z^m}+\beta$, where $u\in C^\infty(Z)$ and $\beta\in \Omega^1(Z)$. This decomposition was already studied thoroughly in \cite{MO} in the problem of existence of $b^m$-contact structures under the name of \emph{convex} $b^m$-contact forms (see Definition \ref{def:bcontactconvex}). Not every $b^m$-contact form is of course $\R^+$-invariant: for example the kernel of $g\alpha$, where $g\in C^\infty(M)$ is positive, defines the same contact structure as $\ker \alpha$, but a priori, there is no reason for the function $g$ to be $\R^+$-invariant.
\end{remark}

\begin{example}
An example of an $\R^+$-invariant $b^m$-contact form is given by Example \ref{example:3torus}. Consider $(\mathbb{T}^3,\alpha= \sin \phi \frac{dx}{\sin x}+\cos \phi dy)$. Indeed, the vector field given by $\sin x\frac{\partial}{\partial x}$ is a strict contact vector field (that is it satisfies $\mathcal{L}_X\alpha=0$) and it is transverse to the critical set. However, this example is not overtwisted: there are no periodic Reeb orbits away from the critical set as we saw in Example \ref{Ex:T3nopo}.
\end{example}

\begin{example}
The $b$-contact form on $S^3$ given by $\alpha=\iota_X \omega$ exhibited in Example \ref{example:S3} is not $\R^+$-invariant around the critical set. The Reeb vector field is not transverse to the critical set and also, in the decomposition of the $b$-contact form, it clearly does not decompose as $\alpha=u\frac{dz}{z}+\beta$ where $u\in C^\infty(Z)$ and $\beta\in \Omega^1(Z)$.
\end{example}

The flow of the vector field $X$ generates a $\R^+$-action. We often refer to $M \setminus K$ as the $\R^+$-invariant part of the contact manifold $(M,\alpha)$ and will denote it by $M_{inv}$.

In the $\R^+$-invariant part of the contact manifold $M_{inv}$, we have coordinates adapted to the action. Indeed, by following the flow of $X$, the $\R^+$-invariant part is diffeomorphic to $\partial K \times \R^+$, where $K$ is the compact set with boundary as in Definition \ref{def:R^+invariantcontact}.

For our purposes, we fix notation for those coordinates in the case of maps from the disk $D$ to $M$. Consider $u: D\to M$ and assume that for $z\in D$, $u(z)\in M_{inv}$. We will write
\begin{equation}\label{eq:R^+invariantcoordinates}
u(z)=(d(z),w(z))
\end{equation}
where $d(z)\in \R^+$ and $w(z)\in \partial K$. This notation will appear in the proof of the main theorem of this section, which is given by the following.

\begin{theorem}\label{thm:R^+invariantHofer}
Let $(M^3,\alpha)$ be an overtwisted $\R^+$-invariant contact manifold. Then there exists a $1$-parametric family of periodic Reeb orbits in the $\R^+$-invariant part of $M$ or a periodic Reeb orbit away from the $\R^+$-invariant part.
\end{theorem}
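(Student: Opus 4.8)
The plan is to follow Hofer's original argument for the Weinstein conjecture for overtwisted contact manifolds (reviewed in Subsection~\ref{subsec:reviewweinstein}), carefully checking at each step where the non-compactness of $M$ enters and showing that the $\R^+$-invariance in the ends tames it. First I would set up the symplectization $(\R \times M, \omega = d(e^t\alpha))$ with an almost complex structure $J$ compatible with $\alpha$ as in Subsection~\ref{subsec:reviewweinstein}. Since the overtwisted disk $D_{OT}$ is contained in the compact set $K$, Theorem~\ref{thm:bishop} applies verbatim locally near the elliptic singularity $e$ and produces the Bishop family $\{\u_t\}_{t \in [0,\epsilon)}$ of $J$-holomorphic disks with boundary on $D_{OT}^*$, together with the transversality property of Lemma~\ref{lem:transverse}. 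The goal is then to run the maximal extension/bubbling dichotomy: either the gradients $\|d\u_t\|$ stay bounded on $[0,T)$ and the family extends past $T$, eventually contradicting transversality with the characteristic foliation; or the gradient blows up, and after rescaling à la Hofer one extracts a non-constant finite energy plane, whence a periodic Reeb orbit by Theorem~\ref{thm:finiteenergy}.

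The key new point — and the one that must be handled with care — is the a priori bound needed to make the compactness/bubbling analysis work on a non-compact target. In the compact case one uses that $M$ is compact to get uniform $C^0$-bounds on the images $u_t(D)$ and hence to apply Gromov compactness. Here I would argue as follows. By the maximum principle applied to the function $z \mapsto a(z) = e^{t\text{-coordinate}}$ (or directly to the coordinate $t\circ \u_t$, which is $J$-holomorphic-subharmonic), together with the boundary condition $\u_t(\partial D) \subset \{0\}\times M$, the image of each Bishop disk stays in $\{t \le 0\}$ and actually in a bounded slab $\{-C \le t \le 0\}$ determined by the geometry near $e$; so the $\R$-direction of the symplectization causes no escape. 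The remaining danger is escape to the open ends of $M$ itself, i.e.\ into $M_{inv} \cong \partial K \times \R^+$. Here is where $\R^+$-invariance is used: the strict contact vector field $X$ on $M\setminus K$ lifts to a symplectic vector field $\widehat{X}$ on $\R\times(M\setminus K)$ (acting only in the $\partial K\times\R^+$ factor, since $\mathcal{L}_X\alpha = 0$), and its flow maps $J$-holomorphic curves to $J$-holomorphic curves. Thus if a sequence of reparametrized bubbles drifts off toward $M_{inv}$, one translates it back by the $\R^+$-action, using the notation of \eqref{eq:R^+invariantcoordinates}; the compactness of $\partial K$ then restores the uniform $C^0$-bound in the $\partial K$-slice, and the $\R^+$-coordinate $d(z)$ of the translated curves can be controlled because the translation is an isometry for an $X$-invariant metric. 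Consequently every bubble, after a suitable finite translation, lives in a fixed compact region — either inside (a neighbourhood of) $K$, or inside a fixed compact piece $\partial K \times [0,C_0]$ of $M_{inv}$ — and Gromov compactness applies there.

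With the $C^0$-bounds in place, the bubbling-off analysis proceeds as in Hofer's proof: the boundary-bubbling alternative is excluded by Lemma~\ref{lem:transverse} (disk bubbling would force an extra intersection of the boundary circles with the characteristic foliation on $D_{OT}^*$, contradicting transversality), so the gradient must blow up in the interior, and the standard rescaling yields a non-constant $J$-holomorphic plane $\widetilde v:\mathbb{C}\to \R\times M$ with $0 < E(\widetilde v) < \infty$. By Theorem~\ref{thm:finiteenergy} its asymptotic limit is a periodic Reeb orbit $\gamma$. There are then two cases according to where the bubble localized. If it localized away from $M_{inv}$, then $\gamma$ lies in $M\setminus (M\setminus K) $, hence in particular away from the $\R^+$-invariant part — case (2) of Theorem~\ref{thm:R^+invariantHofer}; in the $b^m$-setting of Theorem~\ref{thm:bot}, $K$ can be taken disjoint from $Z$, so this gives a genuine periodic orbit in $M\setminus Z$. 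If instead the bubble localized in $M_{inv}\cong \partial K\times \R^+$, then because the contact form there is literally $\R^+$-invariant, applying the $\R^+$-translation to $\gamma$ produces a one-parameter family $\{\gamma_s\}_{s\in\R^+}$ of periodic Reeb orbits, all of the same period, sweeping across $M_{inv}$ and (in the $b^m$-picture) limiting onto the critical set $Z$ — case (1). In either case the finite energy plane provides a capping disk in the symplectization, so the orbits are contractible there, which is the last assertion. The main obstacle, as indicated, is proving the uniform $C^0$-bound for the Bishop disks and their bubbles in the non-compact target; once the $\R^+$-translation trick is in hand this reduces to the compact case, and the rest of the argument is Hofer's, essentially word for word.
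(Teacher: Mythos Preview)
Your plan is correct and matches the paper's proof almost exactly: both run Hofer's Bishop-family/bubbling argument and handle the non-compactness by translating the rescaled disks in the $\R^+$-direction of $M_{inv}\cong\partial K\times\R^+$, so that Arzel\`a--Ascoli applies on the compact factor $\partial K$ and yields a finite energy plane whose asymptotic orbit either sits in $K$ or, pushed by the $\R^+$-action, gives the one-parameter family in $M_{inv}$. Two small points to tighten when you write it up: the translation preserves $J$-holomorphicity only if $J$ is chosen $X$-invariant on $M_{inv}$ (the paper uses this tacitly), and your side claim that the Bishop disks sit in a bounded slab $\{-C\le t\le 0\}$ is neither needed nor generally true---the symplectization coordinate is handled, as always, by the shift $a_k(\cdot)-a_k(z_k)$ built into the rescaling.
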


It is clear that Theorem \ref{thm:bot} is a straightforward corollary of this theorem. Furthermore, Theorem \ref{thm:R^+invariantHofer} applies to more general open contact manifolds as is shown in the next example.

\begin{example}\label{ex:ccontact}
Consider the $3$-torus $\mathbb{T}^3$ and consider the non-smooth $1$-form given by $\alpha=\sin \phi \frac{dx}{\sin x}+\cos \phi \frac{dy}{\sin y}$. The vector field $\sin x \frac{\partial}{\partial x}+\cos y \frac{\partial}{\partial y}$ is a strict contact vector field and is transverse to the boundary of a tubular neighbourhood of $\mathbb{T}^3 \cap \{x=0\} \cap \{y=0\}$ and therefore $\alpha$ is a $\R^+$-invariant contact form, but is not a $b$-contact form.
\end{example}

We now sketch the proof of Theorem \ref{thm:R^+invariantHofer}. As in the standard setting, we study the Bishop family emanating from the overtwisted disk as in Theorem \ref{thm:bishop} and we aim to prove the existence of a finite energy plane and conclude by Theorem \ref{thm:finiteenergy}. As in the standard case, the gradient blows up in the interior of the disk. However, in contrast to the standard proof, we distinguish two different cases.

In the first case, we assume that the sequence where the gradient blows up is contained in a bounded subset of $M$. In this case, the standard arguments apply, and there a reparametrization of the bubble yields a finite energy plane contained in the symplectization of the bounded subset of $M$. This yields the existence of a periodic Reeb orbit away from the $\R^+$-invariant part.

In the opposite case, the sequence of points where the gradient blows up is not bounded in $M$. Loosely speaking this means that the point of blow-up diverges in the non-compact $\R^+$-invariant part. This non-compactness behaviour is settled by translating the $J$-holomorphic curves in the direction of the $\R^+$-action and therefore, in the decomposition as in Equation \ref{eq:R^+invariantcoordinates}, the first term is being kept constant (so it trivially converges) and the second term is contained in the compact set $\partial K$, so Arzela--Ascoli theorem applies to this term. We thereby obtain a sequence of $J$-holomorphic disks, contained in the symplectization of the $\R^+$-invariant part of $M$, that converge to a non-trivial finite energy plane.
This yields a periodic orbit in the $\R^+$-invariant part and by $\R^+$-invariance therefore also a $1$-parametric family of periodic Reeb orbits.

\begin{remark}
In \cite{pp}, the authors prove a foliated version of the Weinstein conjecture. Similar to the main theorem of this section (Theorem \ref{thm:R^+invariantHofer}), the {leaves} of the foliated space are not assumed to be compact, however compactness of the ambient space ensures that Arzela--Ascoli theorem can be applied.
\end{remark}
	
We begin by collecting the necessary lemmas to prove the main theorem. The proof of Theorem \ref{thm:R^+invariantHofer} is then done in the subsequent subsection, Subsection \ref{subsec:mainthm}.

\subsection{Necessary lemmas}

We denote the overtwisted disk, that exists away from the $\R^+$-invariant part, by $D_{OT}$ and denote its elliptic singularity by $e$. As usual $D_{OT}^*$ denotes $D_{OT}\setminus \{e\}$.

Compactness or non-compactness of the Bishop family $\{\u_t\}$ depends on the whether or not the gradient $\nabla \u_t$ is uniformly bounded.

\begin{lemma}[Uniform gradient bound implies compactness of family]\label{lem:uniformboundimpliescompact}
 Let $(M,\alpha)$ be a $\R^+$-invariant contact manifold. Assume that
 $$\widetilde{u}_t=(a_t,u_t): D \to \R\times M, t \in [0,\epsilon)$$
 is a Bishop family as in Theorem \ref{thm:bishop} satisfying the boundary conditions as in Equation \ref{eq:boundaryconditions}. Assume furthermore that
 \begin{equation}\label{def:gradientblowup}
 \sup_{0\leq t<\epsilon} \norm{\nabla \widetilde{u}_t}_{C^0(D)} < \infty.
 \end{equation}
 Then $ \widetilde{u}_t \to  \widetilde{u}_\epsilon$ in $C^\infty(D)$ as $t\to \epsilon$ where $ \widetilde{u}_\epsilon$ is an embedded pseudoholomorphic disk satisfying $ \widetilde{u}_\epsilon(\partial D)\subset \{0\}\times D_{OT}^*$.
 \end{lemma}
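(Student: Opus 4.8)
The plan is to follow Hofer's original bubbling-off dichotomy (see \cite{Hofer, abbashofer}), but to keep careful track of the $\R^+$-invariance so that the compactness argument goes through on the open manifold $M$. The statement to be proved is purely a local-compactness assertion: once we know the gradients $\norm{\nabla\widetilde{u}_t}_{C^0(D)}$ are uniformly bounded on $[0,\epsilon)$, we want to conclude $C^\infty$-convergence of the family to a limit disk $\widetilde{u}_\epsilon$ with the prescribed boundary behaviour. Since the hard non-compactness issue (the blow-up of the gradient, and the possibility that the blow-up point escapes to infinity in $M_{inv}$) is \emph{excluded} by hypothesis \eqref{def:gradientblowup}, the proof here is essentially the standard elliptic-bootstrapping argument; the only subtlety is that a priori the disks $\widetilde{u}_t$ could wander off to infinity in the non-compact end even with bounded gradient, so one must first confine their images to a compact set.

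\textbf{Step 1: confinement of the images.} First I would show that $\bigcup_{t\in[0,\epsilon)}\widetilde{u}_t(D)$ is contained in a compact subset of $\R\times M$. The boundary circles $\widetilde{u}_t(\partial D)$ lie in $\{0\}\times D_{OT}^*$, which is relatively compact, so the boundary values stay bounded. By the uniform gradient bound, $\mathrm{diam}(\widetilde{u}_t(D))\le \norm{\nabla\widetilde{u}_t}_{C^0(D)}\cdot\mathrm{diam}(D)$ is uniformly bounded in any fixed complete metric on $\R\times M$; combined with the bounded boundary values this forces all the images into a fixed compact set $C\subset\R\times M$ (here I use that $D_{OT}$, hence a uniform neighbourhood of it, lies in the compact piece $K$ away from the $\R^+$-invariant part, plus completeness of the metric adapted to the $\R^+$-action to rule out escape to infinity along the end). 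So effectively we are working inside the symplectization of a compact piece of $M$, and all of Hofer's compactness machinery applies verbatim.

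\textbf{Step 2: elliptic bootstrapping and extraction of the limit.} With images confined to $C$ and $\norm{\nabla\widetilde{u}_t}_{C^0(D)}$ uniformly bounded, the Cauchy--Riemann equation $d\widetilde{u}_t\circ j = J\circ d\widetilde{u}_t$ together with elliptic regularity (interior estimates in the interior of $D$, and boundary estimates near $\partial D$ using that the boundary condition is the totally real submanifold $D_{OT}^*$) gives uniform $C^k(D)$ bounds for every $k$: the $C^1$ bound bootstraps to $C^{1,\alpha}$ via the standard estimates for $J$-holomorphic curves, and then iteratively to all higher derivatives. Hence by Arzel\`a--Ascoli the family $\{\widetilde{u}_t\}$ is precompact in $C^\infty(D)$, and since the Bishop family is already known to depend continuously on $t$ up to $t=\epsilon$ (the map $\Psi$ of Theorem \ref{thm:bishop} is continuous on $D\times[0,\epsilon)$ and we are extending it), the limit $\widetilde{u}_\epsilon=\lim_{t\to\epsilon}\widetilde{u}_t$ exists in $C^\infty(D)$. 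The limit is again a $J$-holomorphic disk (the equation passes to the $C^\infty$-limit), its boundary satisfies $\widetilde{u}_\epsilon(\partial D)\subset\{0\}\times D_{OT}^*$ (closed boundary condition, using that $\partial D_{OT}^*$ is at positive distance from $e$ because the winding number is $1$ and Lemma \ref{lem:transverse} keeps the boundary transverse to the characteristic foliation away from $e$), and embeddedness of $\widetilde{u}_\epsilon$ follows from embeddedness of the $\widetilde{u}_t$ for $t<\epsilon$ together with the index/automatic-transversality argument ($Ind(\widetilde{u}_t)=2$), exactly as in \cite{Hofer,abbashofer} — a limit of embedded index-$2$ curves in this setting is either embedded or multiply covered, and the latter is ruled out by the winding number $1$ on the boundary.

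\textbf{Main obstacle.} The one genuinely non-routine point is \textbf{Step 1}: on a non-compact manifold a uniform gradient bound does \emph{not} automatically confine the images, so I have to exploit the specific geometry — the boundary anchoring in the fixed compact piece $D_{OT}^*$ plus the completeness of the $\R^+$-adapted metric on $M_{inv}$ — to trap the disks. Everything after that is Hofer's standard argument, transplanted word-for-word into the symplectization of the compact piece $C$.
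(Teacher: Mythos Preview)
Your proposal is correct and reconstructs the standard argument. The paper itself gives no proof of this lemma: it simply refers to Proposition 8.1.2 in \cite{abbashofer}. Your Steps 1--2 are precisely the content of that proposition (boundary anchoring plus uniform $C^1$-bound confines the images to a compact set, then elliptic bootstrapping for the totally real boundary problem and Arzel\`a--Ascoli give $C^\infty$-subconvergence, and the index/winding-number argument gives embeddedness of the limit). The one point you flag as the ``main obstacle'' --- that on a non-compact $M$ a gradient bound alone does not confine images --- is handled exactly as you say, via the fixed boundary condition in $\{0\}\times D_{OT}^*$ together with the diameter bound; the paper does not comment on this separately and is content to invoke the compact-case reference, presumably because this confinement step makes the situation effectively compact.
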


 \begin{proof}
     The proof can be found in Proposition 8.1.2 in \cite{abbashofer}.
 \end{proof}

Let us first do a remark concerning gradient blow-ups.

\begin{remark}
As stated in the last lemma, compactness or non-compactness of the Bishop family $\{\u_t\}$ depends on the whether or not the gradient $\nabla \u_t$ is uniformly bounded. Let us remark here that by blow-up of the gradient, we mean that the gradient of $J$-holomorphic families of disk blows up for a certain parametrization of the disk.

More precisely, it is clear that if $\u_t:D\to \R\times M$ is a family of $J$-holomorphic disks satisfying the usual boundary conditions
\begin{equation}\label{eq:boundaryconditions}
\u_t(\partial D) \subset \{0\} \times D_{OT}^* \text{ and } \inf_{0\leq t<\epsilon} \text{dist}(u_t(\partial D), e)>0
\end{equation}
and $\phi:D\to D$ is a conformal automorphism of the unit disk, then $\u_t\circ \phi$ is also a $J$-holomorphic curve satisfying the same boundary conditions. It is well-known that the conformal automorphism group is of the disk is non-compact and generated by
$$\phi(z)=e^{i\alpha} \frac{a-z}{1-\overline{a}z}, $$
where $\alpha \in [0,2\pi)$ and $a\in \text{int}(D)$. By choosing $a$ close to $\partial D$, $\norm{\nabla \phi}_{C^0(D)}$ becomes arbitrarily large. Therefore, when we say that the gradient blows up, we mean that it blows up for the infimum of all possible conformal reparametrization of the disk. More explicitly, we mean that the quantity
$$e(\u_t):=\inf_{\phi \in \text{Aut}(D)} \norm{\nabla (\u_t\circ \phi)}_{C^0(D)}$$
goes to infinity when $t\to \epsilon$.
\end{remark}

	As in \cite{abbashofer}, the energy of the family $ \widetilde{u}_t$ is bounded above by the $d\alpha$-area of the overtwisted disk.

\begin{lemma}[Universal upper bound on the energy]\label{lem:upperboundenergy}
 Let $(M,\alpha)$ be a $\R^+$invariant contact manifold. Assume that
 $$ \widetilde{u}_t=(a_t,u_t): D \to \R\times M, t \in [0,\epsilon)$$
 is a Bishop family as in Theorem \ref{thm:bishop} satisfying $$\inf_{0\leq t <\epsilon} \text{dist}(u_t(\partial D),e)>0.$$
 Then there exists a constant $C=C(\alpha,D_{OT})>0$ such that $E( \widetilde{u}_t)<C$.
\end{lemma}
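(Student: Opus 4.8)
The plan is to mimic the argument of Abbas--Hofer for the closed case, the only difference being that we must track where the Bishop disks sit and check that the estimate is insensitive to the non-compactness of the $\R^+$-invariant end. Recall that the energy is $E(\u_t)=\sup_{\phi\in\Ca}\int_D \u_t^*d(\phi\alpha)$, which by Stokes' theorem equals $\sup_{\phi\in\Ca}\int_{\partial D}(\phi\circ a_t)\,u_t^*\alpha$. The central observation is that $E(\u_t)$ splits as a sum of the horizontal energy $E^h(u_t)=\int_D u_t^*d\alpha$ and a boundary term that only sees $u_t|_{\partial D}$ together with the monotone reparametrization $\phi$; this is exactly the decomposition used in Section 8 of \cite{abbashofer}. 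Since $d(\phi\alpha)=\phi'\,da\wedge\alpha+\phi\,d\alpha$ and $\u_t$ is $J$-holomorphic, the first term is pointwise nonnegative, so the supremum over $\phi$ is controlled once we bound $E^h(u_t)$ and the boundary contribution separately.

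First I would bound the horizontal energy $E^h(u_t)=\int_D u_t^*d\alpha$. By Lemma \ref{lem:transverse} the Bishop family restricted to $\partial D$ is transverse to the characteristic foliation of $D_{OT}^*$ and, together with property (3) of Theorem \ref{thm:bishop} (winding number $1$), the loop $u_t(\partial D)$ bounds a subdisk $D_t\subset D_{OT}$ containing the elliptic singularity $e$; moreover $u_t(\partial D)$ and $\partial D_t$ are homotopic in $D_{OT}^*$. Because $u_t(\partial D)=\partial(u_t(D))$ as a cycle and also $=\partial D_t$, and $d\alpha$ is exact on any simply connected piece (indeed $d\alpha=d\alpha$ globally), Stokes gives $\int_D u_t^*d\alpha=\int_{D_t}d\alpha\big|_{D_{OT}}\le \int_{D_{OT}}\big|d\alpha|_{D_{OT}}\big|=:C_1$, which is a finite constant depending only on $\alpha$ and the fixed overtwisted disk $D_{OT}$. (Here one uses that $d\alpha$ restricted to $D_{OT}$ is a genuine smooth area-type $2$-form since $D_{OT}\subset M\setminus Z$ is away from the critical set, so the integral is finite.) This is the step where the $b^m$-feature is genuinely used: the overtwisted disk lives in the smooth locus, so $d\alpha$ is an honest form there.

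Next I would bound the boundary term. Writing $E(\u_t)=E^h(u_t)+\sup_{\phi\in\Ca}\int_D \phi'(a_t)\,da_t\wedge u_t^*\alpha$ and applying Stokes to the second summand, one gets $\int_{\partial D}(\phi\circ a_t)\,u_t^*\alpha - \int_D \phi(a_t)\,u_t^*d\alpha$; since $0\le\phi\le1$ and $u_t^*d\alpha\ge0$, the subtracted integral is between $-C_1$ and $0$, while the boundary integral is bounded by $\int_{\partial D}|u_t^*\alpha|$. On $\partial D$ we have $u_t(\partial D)\subset D_{OT}^*$, a fixed compact set, and $u_t|_{\partial D}$ has winding number $1$ over the characteristic foliation; parametrizing $\partial D_t$ and using that $\alpha$ restricted to the leaves of the characteristic foliation vanishes while $d(\alpha|_{D_{OT}})$ has fixed total mass $C_1$, the integral $\int_{\partial D}u_t^*\alpha=\int_{\partial D_t}\alpha|_{D_{OT}}=\int_{D_t}d\alpha|_{D_{OT}}\le C_1$ as well. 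Hence $E(\u_t)\le 3C_1=:C$, a constant depending only on $\alpha$ and $D_{OT}$.

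The main obstacle I anticipate is not the $\R^+$-invariance — which in fact plays no role here, since $E(\u_t)$ is an integral over the \emph{fixed} domain disk and is controlled entirely by the boundary data on the \emph{fixed} overtwisted disk — but rather the bookkeeping needed to identify $u_t(\partial D)$ with the boundary of a subdisk $D_t\subset D_{OT}$ of controlled $d\alpha$-area and to make the two Stokes applications rigorous when $\u_t$ is only $J$-holomorphic (not an embedding a priori). This is handled exactly as in Abbas--Hofer: Lemma \ref{lem:transverse} guarantees $u_t|_{\partial D}$ is an embedded transverse loop, property (3) fixes its isotopy class, and the uniform bound $\inf_t\mathrm{dist}(u_t(\partial D),e)>0$ keeps everything in a fixed compact annular region of $D_{OT}^*$ on which $d\alpha$ has finite mass. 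Once this is set up the inequality $E(\u_t)<C$ is immediate, and one simply cites Proposition 8.? of \cite{abbashofer} for the detailed verification.
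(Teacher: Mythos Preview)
Your proposal is correct and is precisely the Abbas--Hofer argument that the paper invokes (it simply cites Lemma 8.1.3 in \cite{abbashofer}); you are also right that the $\R^+$-invariance plays no role here, since the bound comes entirely from Stokes' theorem and the fixed boundary data on $D_{OT}\subset M\setminus Z$. One minor simplification: because $a_t\equiv 0$ on $\partial D$ by the boundary condition $\u_t(\partial D)\subset\{0\}\times D_{OT}^*$, your first Stokes identity already gives $E(\u_t)=\sup_{\phi}\phi(0)\int_{\partial D}u_t^*\alpha\le \int_{\partial D}u_t^*\alpha=\int_{D_t}d\alpha|_{D_{OT}}\le C_1$, so the subsequent splitting and the constant $3C_1$ are unnecessary (and the ``$=$'' in your decomposition $E(\u_t)=E^h(u_t)+\sup_\phi(\cdots)$ should in any case be ``$\le$'').
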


 \begin{proof}
     See Lemma 8.1.3 in \cite{abbashofer}.
 \end{proof}
	
Similar to the arguments in \cite{abbashofer}, the horizontal energy of the family $\{\u_t\}_t$ is bounded below independently of $t$. Note that a priori, compactness is needed in order to apply Arzela--Ascoli theorem. However, the limit of the  Bishop family in the non-compact manifold $M$ is taken care of by the $\R^+$-invariance.

\begin{proposition}[Universal lower bound on the horizontal energy]\label{lem:lowerboundenergy}
Let $(M,\alpha)$ be a $\R^+$-invariant contact manifold. Assume that
 $$ \widetilde{u}_t=(a_t,u_t): D \to \R\times M, t \in [0,\epsilon)$$
 is a Bishop family as in Theorem \ref{thm:bishop} satisfying  $$\inf_{0\leq t <\epsilon} \text{dist}(u_t(\partial D),e)>0.$$
 Then there exists a constant $c>0$ independent of $t$ so that
$$E^h(u_t):=\int_D u_t^*d\alpha \geq c.$$
\end{proposition}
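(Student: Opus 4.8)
The plan is to follow the strategy of Lemma 8.1.4 in \cite{abbashofer}, but to argue carefully around the non-compactness of $M\setminus K$ so that the $\R^+$-invariance is the only extra ingredient needed. First I would argue by contradiction: suppose there is a sequence $t_k\to\epsilon$ (after relabelling one can also take a sequence of $t$'s inside $[0,\epsilon)$) along which $E^h(u_{t_k})=\int_D u_{t_k}^*d\alpha\to 0$. The goal is to extract from $\{\widetilde u_{t_k}\}$ a limit that contradicts the boundary behaviour of the Bishop family recorded in Theorem \ref{thm:bishop} and Lemma \ref{lem:transverse}; concretely, such a limit would be a pseudoholomorphic disk with image in a single leaf of the characteristic foliation, or collapsing to a point on $D_{OT}^*$, which is incompatible with the winding number $1$ condition on $\partial D$.

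The key steps, in order. (1) \emph{Localize the boundary}. By the hypothesis $\inf_t\text{dist}(u_t(\partial D),e)>0$ and since the overtwisted disk $D_{OT}$ lies in the compact set $K$ (away from the $\R^+$-invariant part), the boundary circles $u_{t_k}(\partial D)$ stay in a fixed compact subset of $D_{OT}^*$; passing to a subsequence they converge in $C^0$. (2) \emph{Reparametrize to control the gradient near the boundary}. Using Lemma \ref{lem:uniformboundimpliescompact} and the elliptic estimates behind it, either $\nabla\widetilde u_{t_k}$ is uniformly bounded — in which case $\widetilde u_{t_k}$ converges in $C^\infty(D)$ to a pseudoholomorphic disk $\widetilde u_\epsilon$ with $E^h(u_\epsilon)=\lim E^h(u_{t_k})=0$, forcing $u_\epsilon^*d\alpha\equiv 0$ and hence (by the contact condition $d\alpha|_\xi>0$ composed with $J_\xi$) that $u_\epsilon$ is tangent to the characteristic foliation, contradicting winding number $1$ on $\partial D$ — or the gradient blows up. (3) \emph{Rescale at an interior blow-up point}. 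In the blow-up case, a standard Hofer-type rescaling at the blow-up sequence $z_k\in\text{int}(D)$ produces a non-constant pseudoholomorphic plane $v:\C\to\R\times M$ with $\int_\C v^*d\alpha\leq\liminf E^h(u_{t_k})=0$, hence again $v^*d\alpha\equiv 0$; but a non-constant finite-energy plane with vanishing $d\alpha$-energy is impossible (its image would lie over a single point of $M$, and the Cauchy–Riemann equation with $J(\partial_t)=R_\alpha$ then forces $v$ constant — cf. the argument in \cite{abbashofer}). Either way we reach a contradiction, so $E^h(u_t)$ is bounded below by some $c>0$ independent of $t$.

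The subtle point — and the place where $\R^+$-invariance enters — is step (3) when the blow-up sequence $z_k$ is such that $u_{t_k}(z_k)$ diverges in the non-compact $\R^+$-invariant part $M_{inv}\cong \partial K\times\R^+$. There one cannot invoke Arzela–Ascoli directly on $M$. The remedy is the one sketched after Theorem \ref{thm:R^+invariantHofer}: in the coordinates of Equation \ref{eq:R^+invariantcoordinates}, write $u_{t_k}=(d_{t_k},w_{t_k})$ and, using that the strict contact vector field $X$ generates an $\R^+$-action preserving $\alpha$ (hence the symplectization structure on $\R\times M$ and $J$), translate the rescaled curves in the $\R^+$-direction so that $d(\,\cdot\,)$ is renormalized to be bounded; the $\partial K$-component $w$ is automatically valued in a compact set, so Arzela–Ascoli applies after all, and the rescaled sequence converges to a non-constant finite-energy plane with zero $d\alpha$-energy, again a contradiction. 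Thus in every case $E^h(u_t)\geq c>0$.

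The main obstacle I expect is bookkeeping the rescaling in step (3) so that it is simultaneously compatible with (a) the conformal reparametrization used to realize the infimal gradient $e(\widetilde u_t)$ and (b) the $\R^+$-translation needed for compactness; one must check that the $d\alpha$-energy, being invariant under both, still passes to the limit and vanishes, and that the limit plane is genuinely non-constant (this uses a definite amount of $d\alpha$-energy concentrating near $z_k$, which in turn is why one only gets a \emph{lower} bound rather than an exact value). Everything else is a transcription of the compact arguments in \cite{abbashofer}.
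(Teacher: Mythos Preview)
Your proposal is correct and follows essentially the same route as the paper's proof: argue by contradiction, split into the cases of uniformly bounded versus unbounded gradient, and in the blow-up case use the $\R^+$-translation trick in $M_{inv}\cong\partial K\times\R^+$ to recover enough compactness for Arzel\`a--Ascoli before concluding that the resulting non-constant finite energy plane has zero $d\alpha$-energy, a contradiction. The only point the paper makes explicit that you leave implicit is the subcase where the gradient blows up at the boundary (i.e.\ $R_k\,\mathrm{dist}(z_k,\partial D)$ stays bounded): since $u_t(\partial D)\subset D_{OT}\subset K$, this bubbles off a finite energy half-plane entirely in the compact part and so, as you say, is a straight transcription of the compact argument in \cite{abbashofer}.
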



\begin{proof}
The proof follows the strategy of the proof of Proposition 8.1.4 in \cite{abbashofer}.
Let us argue by contradiction. We pick a sequence $\{\u_{t_k}\}_k$ such that $t_k\to \epsilon$ as $k\to \infty$. For convenience, we write $\u_k=\u_{t_k}$.
We assume by contradiction that $\int_D u_k^*d\alpha\to 0$ as $k\to \infty$. There are two possibilities: either the gradient is uniformly bounded or it is not.

\underline{Case I}: the gradient is uniformly bounded.

The case where the gradient is uniformly bounded is as in proof of Proposition 8.1.4: $\u_k $ converges in $C^\infty$ to some $\widetilde{v}$ by Lemma \ref{lem:uniformboundimpliescompact} which satisfies that $E^h(v)=0$. Following the original proof, this implies that $\widetilde{v}$ is constant, which is a contradiction with $\widetilde{v}$ having non-zero winding number.

\underline{Case II}: the gradient blows up.

Let us take a sequence $z_k\in D$ such that
$$R_k:= |\nabla \u_{k}(z_k)| \to \infty$$
as $k\to \infty$ and let us assume that $z_k\to z_0\in D$ (after passing maybe to a subsequence). We will do the bubbling analysis à la Sacks--Uhlenbeck around the point $z_0$ to derive a contradiction with the assumption that $\int_D u_k^* d\alpha\to 0$. Due to the non-compactness of $M$, care needs to be taken in the bubbling analysis. Due to the $\R^+$-invariance, the non-compactness is \emph{mild} and we will see that Arzela--Ascoli theorem can still be applied to show convergence to some $J$-holomorphic plane (in Subcase I) respectively to some $J$-holomorphic half plane (in Subcase II).

Take a sequence of $\epsilon_k>0$ such that $\epsilon_k \to 0$ and $R_k\epsilon_k \to \infty$. By Hofer's lemma (Lemma 4.4.4 in \cite{abbashofer}), we can additionally assume that
\begin{equation}
    |\nabla \u_{k}(z)| \leq 2R_k \quad \text{if } |z-z_k|\leq \epsilon_k.
\end{equation}
We distinguish two subcases.

\underline{Subcase I}: the gradient blows up in the interior.

More precisely, by this we mean that $R_k\text{dist}(z_k,\partial D) \to \infty$. We will show that in this case a non-trivial finite energy plane bubbles off and has zero horizontal energy, which leads to a contradiction.

We distinguish two further subcases, depending whether or not the gradient blows up in the compact subset $K \subset M$ or the $\R^+$-invariant part $M\setminus K$.

First, let us assume that $u_k(z_k)$ remains in the compact subset $K$. Then the standard arguments as in Proposition 8.1.4 in \cite{abbashofer} apply and lead to a non-trivial finite energy plane having zero horizontal energy, which is a contradiction.

Hence, we assume that $ u_k(z_k)$ tends to the $\R^+$-invariant part. We use the coordinates introduced in Equation \ref{eq:R^+invariantcoordinates},
$$\u_k=(a_k,d_k,w_k)$$
and we assume that $d_k(z_k) \to \infty$. If this was not that case, we would be in the case where $u_k(z_k)$ remains in the compact subset $K$.

We now define the following pseudoholomorphic curves, which are a translation in the invariant direction of a reparametrization of $\u_k$. We define $v_k(z)=u_k(z_k+\frac{z}{R_k})$ so that $v_k(0)=u_k(z_k)$ is the point where the gradient blows up. For $z\in B_{R_k}(-R_kz_k) \cap v_k^{-1}(M_{inv})$, we define
$$\widetilde{v}_k(z)=\big(a_{k}\big(\frac{z}{R_k}+z_k \big)-a_{k}(z_k),d_{k}\big(z_k+\frac{z}{R_k}\big) -d_{k}(z_k)+N,w_k\big(z_k+\frac{z}{R_k}\big)\big)$$
and denote the components by $\widetilde{v}_k:=(e_k,f_k,q_k)$ where $(f_k,q_k)\in \partial K \times \R^+$ are the $\R^+$-invariant coordinates. The map $(f_k,q_k)$ is a translation in the $\R^+$-invariant direction of $v_k$ that we do in order to apply Arzela--Ascoli to the function $q_k$ that is contained in the compact space $\partial K$.

The family $J$-holomorphic curves $\widetilde{v}_k$ satisfies
\begin{enumerate}
    \item $|\nabla \widetilde{v}_k(0)|=1,$
    \item $|\nabla \widetilde{v}_k(z)| \leq 2$ if $z\in \Omega_k:=B_{\epsilon_kR_k(0)}(0)\cap B_{R_k}(-R_kz_k)\cap v^{-1}_k(M_{inv}),$
    \item $e_k(0)=f_k(0)=0$.
\end{enumerate}
Furthermore
    $$\int_{B_{R_k}(-R_kz_k)\cap v^{-1}(M_{inv})} (f_k,q_k)^*d\alpha \leq \int_D u_{k}^*d\alpha \to 0$$
when $k\to \infty$.

We claim that $\cup_{k} \Omega_k=\mathbb{C}$. As $z\in B_{\epsilon_k R_k}$, $|z_k-\frac{z}{R_k}-z_k|<\epsilon_k$. As $v_k(z_k)\in M_{inv}$, we thus obtain that for $k$ large, $v_k(\Omega_k)\subset M_{inv}$. Hence for $k>k_0$, where $k_0$ is large, $\Omega_k= B_{\epsilon_kR_k(0)}(0)\cap B_{R_k}(-R_kz_k):=\widetilde{\Omega}_k$. As $R_k \text{dist}(z_k, \partial D)\to \infty$, we have that $\cup_{k\geq k_0}\widetilde{\Omega}_k=\mathbb{C}$.

  By the $C^\infty_\text{loc}$-bounds (Theorem 4.3.4 in \cite{abbashofer}), we conclude that (up to choosing a subsequence) $\widetilde{v}_k$ converges in $C^\infty_{\text{loc}}$ to a $J$-holomorphic plane
    $$\widetilde{v}=(b,v): \mathbb{C} \to \R \times M_{inv}$$
    satisfying $|\nabla \widetilde{v}(0)|=1$, $\int_{\mathbb{C}}v^* d\alpha=0$. Furthermore $E(\widetilde{v})\leq C$.
    Indeed, let $Q\subset \mathbb{C}$ be a compact subset and assume $k>k_0$ so that $v_k(\Omega_k)\subset M_{inv}$. We compute
    \begin{align}
        \sup_{\phi \in \mathcal{C}} \int_Q \widetilde{v}^*_kd(\phi\alpha) \leq \sup_{\phi \in \mathcal{C}} \int_{B_{R_k}(-R_kz_k)}M \widetilde{v}^*_kd(\phi \alpha) \\
        =\sup_{\phi \in \mathcal{C}}\int_D \widetilde{u}^*_k d(\phi \alpha)=E(\widetilde{u}_k)\leq C,
    \end{align}
where $C>0$ is such as in Lemma \ref{lem:upperboundenergy}. We now take the limit $k\to \infty$ and then the supremum over all compact sets $Q\subset \mathbb{C}$ to obtain $E(\widetilde{v})<\infty$.

    By Proposition 4.4.2 in \cite{abbashofer}, $\widetilde{v}$ is constant, which is in contradiction with $|\nabla \widetilde{v}(0)|=1$. We conclude that Subcase I cannot happen.

\underline{Subcase II}: the gradient blows up on the boundary.

More precisely, by this we mean that $R_k\text{dist}(z_k,\partial D) \to l\in [0,\infty)$. We furthermore assume that the gradient only blows up at the boundary, that is that there does not exist another subsequence such that $R_k\text{dist}(z_k,\partial D) \to \infty$. If this was the case, we would be in Subcase I, and therefore obtain a contradiction. Due to the boundary condition $u_t(\partial D)\subset D_{OT}$, we can assume that the subsequence $u_t$ is therefore contained in the compact subspace $K\subset M$ and therefore the standard arguments of the proof of Proposition 8.1.4 in \cite{abbashofer} apply and shows bubbling of a non-trivial finite energy half-plane. This is in contradiction with the horizontal energy to be zero.

This finishes the proof of Proposition \ref{lem:lowerboundenergy}.
\end{proof}


Finally, the next lemma assures that bubbling does not happen at the boundary of the holomorphic disks.

\begin{proposition}[Forbidding bubbling at the boundary]\label{prop:noboundarybubbling}
Let $ \{\widetilde{u}_t\}_t$ a Bishop family as before. Assume that the gradient blows up, that is
$$\sup_{0\leq t \leq \epsilon} e( \widetilde{u}_t)=\infty$$
where $e(\u_t)$ is given as in Equation \ref{eq:boundedgradient}. If $t\to \epsilon$ and $(z_k)_{k\in \mathbb{N}} \subset D$ are sequences so that $R_k:= |\nabla \widetilde{u}_{t_k}(z_k)| \to \infty$. Then the sequence $R_k\text{dist}(z_k,\partial D)$ is unbounded.
\end{proposition}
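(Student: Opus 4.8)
The plan is to argue by contradiction. Suppose the sequence $R_k\,\text{dist}(z_k,\partial D)$ were bounded. After passing to a subsequence, $R_k\,\text{dist}(z_k,\partial D)\to\ell$ for some $\ell\in[0,\infty)$, and since $R_k\to\infty$ this forces $\text{dist}(z_k,\partial D)\to0$; passing to a further subsequence, $z_k\to z_0\in\partial D$. Applying Hofer's lemma (Lemma 4.4.4 in \cite{abbashofer}) to the maps $\widetilde{u}_{t_k}$, I would fix numbers $\epsilon_k\to0$ with $R_k\epsilon_k\to\infty$ and $|\nabla\widetilde{u}_{t_k}(z)|\le 2R_k$ whenever $|z-z_k|\le\epsilon_k$. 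The aim is to show that, under this hypothesis, one can bubble off a non-constant finite-energy $J$-holomorphic \emph{half}-plane whose boundary lies on $D_{OT}^{*}$, contradicting the transversality of the Bishop family to the characteristic foliation (Lemma \ref{lem:transverse}).

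The first step is to confine the bubble to a compact region, so that the non-compactness of $M$ plays no role. Pick $\bar z_k\in\partial D$ with $|z_k-\bar z_k|=\text{dist}(z_k,\partial D)$. Since $\text{dist}(z_k,\partial D)\le(\ell+1)/R_k\le\epsilon_k$ for $k$ large, the segment from $z_k$ to $\bar z_k$ lies in the disc $\{|z-z_k|\le\epsilon_k\}$ on which the above gradient bound holds, hence
$$\text{dist}_M\!\bigl(u_{t_k}(z_k),u_{t_k}(\bar z_k)\bigr)\le 2R_k\,\text{dist}(z_k,\partial D)\le 2(\ell+1)$$
for $k$ large. As $u_{t_k}(\bar z_k)\in D_{OT}^{*}$ and $D_{OT}$ is compact and, by Definition \ref{def:R^+invariantcontact}, contained in the compact set $K$, all the points $u_{t_k}(z_k)$ remain in a fixed compact subset $K'\subset M$, e.g. the closed $2(\ell+1)$-neighbourhood of $D_{OT}$. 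Thus the whole bubbling analysis around the $z_k$ takes place inside $\R\times K'$, a region indistinguishable, for the local elliptic theory used below, from the symplectization of a closed contact manifold.

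From here the argument is the classical one of \cite{Hofer} (see the discussion around Proposition 8.1.4 in \cite{abbashofer}). Rescaling $\widetilde{u}_{t_k}$ by $R_k$ about $z_k$ --- precomposing with biholomorphisms carrying half-disc neighbourhoods of $z_k$ onto an exhaustion of a half-plane $\mathbb{H}$, and translating in the $\R$-factor so that the first component vanishes at $0$ --- produces $J$-holomorphic maps $\widetilde{v}_k=(b_k,v_k)$ with $|\nabla\widetilde{v}_k(0)|=1$, with $|\nabla\widetilde{v}_k|\le2$ on domains exhausting $\mathbb{H}$, with $v_k$ sending the boundary into $D_{OT}^{*}$, and with $E(\widetilde{v}_k)\le C$ by Lemma \ref{lem:upperboundenergy}. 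By the boundary version of the $C^\infty_\text{loc}$-bounds (Theorem 4.3.4 in \cite{abbashofer}), a subsequence converges in $C^\infty_\text{loc}$ to a finite-energy $J$-holomorphic half-plane $\widetilde{v}=(b,v)\colon\mathbb{H}\to\R\times M$ with $v(\partial\mathbb{H})\subset D_{OT}^{*}$, which is non-constant since $|\nabla\widetilde{v}(0)|=1$. The standard analysis of such boundary bubbles then forces $v|_{\partial\mathbb{H}}$ to run along the characteristic foliation $\xi|_{D_{OT}^{*}}\cap TD_{OT}^{*}$, while it is at the same time a $C^\infty_\text{loc}$-limit of reparametrized boundary arcs of the Bishop disks, which by Lemma \ref{lem:transverse} are transverse to that foliation. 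This contradiction shows that $R_k\,\text{dist}(z_k,\partial D)$ is unbounded, which is exactly Proposition \ref{prop:noboundarybubbling}.

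The step I expect to be the main obstacle is the compactness reduction: one must make sure the blow-up images $u_{t_k}(z_k)$ do not escape towards the $\R^{+}$-invariant end, since otherwise the standard boundary-bubbling machinery cannot be invoked. This is settled by the estimate above, which combines the boundary condition $\widetilde{u}_{t_k}(\partial D)\subset D_{OT}^{*}$, the bound $\text{dist}(z_k,\partial D)=O(1/R_k)$ coming from the contradiction hypothesis, and Hofer's lemma. Once the bubble is confined to $\R\times K'$, ruling out boundary bubbling is identical to the compact case and reduces to the transversality recorded in Lemma \ref{lem:transverse}.
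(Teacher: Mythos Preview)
Your proposal is correct and follows exactly the route the paper intends: the paper's entire proof is the one-line citation ``See Proposition~8.2.1 in \cite{abbashofer}'', and your argument is precisely the content of that proposition, adapted to the present setting. The one genuine addition you make---the compactness reduction showing that $u_{t_k}(z_k)$ stays in a fixed compact set $K'\subset M$ via the estimate $\text{dist}_M(u_{t_k}(z_k),u_{t_k}(\bar z_k))\le 2R_k\,\text{dist}(z_k,\partial D)\le 2(\ell+1)$---is the step that justifies invoking the Abbas--Hofer result in this non-compact situation, and it is the same observation the paper makes (more tersely) in Subcase~II of the proof of Proposition~\ref{lem:lowerboundenergy}.
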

\begin{proof}
    See Proposition 8.2.1 in \cite{abbashofer}.
\end{proof}
	
\subsection{Proof of Theorem \ref{thm:R^+invariantHofer}}\label{subsec:mainthm}

We continue by proving the main theorem, that is Theorem \ref{thm:R^+invariantHofer} using the collection of lemmas and propositions of the last subsection.

\begin{proof}[Proof of Theorem \ref{thm:R^+invariantHofer}]
    Consider the Bishop family $\{\widetilde{u}_t\}_{t\in[0,\epsilon)}$ emanating from the overtwisted disk as in Theorem \ref{thm:bishop}. Note that the loops $ \widetilde{u}_t(\partial D)$ never intersect $\partial D_{OT}$ in view of Lemma \ref{lem:transverse}. If $e( \widetilde{u}_t)$ was bounded, then by Lemma \ref{lem:uniformboundimpliescompact} we could continue the family $\{ \widetilde{u}_t\}_t$ beyond $\epsilon$ which contradicts maximality of the family. Hence $e( \widetilde{u}_t)$ is unbounded.
    Pick a sequence $ \{ \widetilde{u}_k\}_k$ such that $\norm{\nabla  \widetilde{u}_k}_{C^0(D)} \to \infty$ as $k \to \infty$. We know that there are lower and upper bounds for the energy by Lemma \ref{lem:upperboundenergy} and Proposition \ref{lem:lowerboundenergy}:
    $$ c\leq E( \widetilde{u}_k) \leq C.$$
    Pick a sequence of points $(z_k)_{k\in \mathbb{N}} \subset D$ so that $R_k := | \nabla  \widetilde{u}_k(z_k)| \to \infty$. By Proposition \ref{prop:noboundarybubbling}, bubbling on the boundary cannot happen and we therefore can assume (after passing to a subsequence) that $z_k \to z_0$ and $R_k \text{dist}(z_k,\partial D)\to \infty$ when $k\to \infty$.

    Here is where the main difference with Hofer's standard proof occurs: There are two possibilities: either the gradient blows up away from the $\R^+$-invariant part or it blows up inside the $\R^+$-invariant part. This was observed already in Proposition \ref{lem:lowerboundenergy} and we repeat similar arguments here.

    For the map ${u}_k(z_k)$, this is saying that either for $k$ large (up to a subsequence to avoid mixed behaviour), $u_k(z_k)$ can satisfy one of the two cases:

    \begin{enumerate}
        \item $u_k(z_k)$ remains away from the $\R^+$-invariant part. In this case, the standard arguments apply as we can assume that $u_k(D) \subset K$, where $K$ is compact (the set $K$ is as in Definition \ref{def:R^+invariantcontact}). Therefore Arzela--Ascoli theorem applies, as well as the rest of Hofer's arguments. This proves that there exists a periodic orbit away from the $\R^+$-invariant part. This proves the first part of the theorem.
        \item $u_k(z_k)$ tends to the $\R^+$-invariant part. In this case, as mentioned before, we run into compactness issues and therefore the standard arguments do not apply directly.
    \end{enumerate}

    In what follows, we hence assume that for $k\to \infty$, $u_k(z_k)$ tends to the $\R^+$-invariant part and we will prove that this implies that there is a $1$-parametric family of periodic orbits in the $\R^+$-invariant neighbourhood.

    More precisely, we assume that for all $k>k_0$, $u_k(z_k)=(d_k(z_k), w_k(z_k))$ as in Equation \ref{eq:R^+invariantcoordinates}. Furthermore, we assume without loss of generality that $d_k(z_k)\to \infty$. Indeed, if this was false, we could just enlarge the compact set $K$ and we would be in the first case.

    We will do now the bubbling analysis around the point $z_k$ as in Hofer and in order to apply Arzela--Ascoli theorem, we do a translation in the $\R^+$-invariant direction.

    Take a sequence $\epsilon_k\to 0$ so that $R_k\epsilon_k \to \infty$. We now use the so called Hofer's lemma (Lemma 4.4.4 in \cite{abbashofer}) to additionally assume that
    \begin{equation}\label{eq:boundedgradient}
    | \nabla  \widetilde{u}_k(z)| \leq 2 R_k
    \end{equation}
    for all $z \in D$ with $|z-z_k|\leq \epsilon_k$. We define for $z \in B_{R_k}(-R_kz_k)$ the pseudoholomorphic maps $ \overline{v}_k=(b_k,v_k)$ given by
    $$\overline{v}_k(z):= \big( a_k(z_k+\frac{z}{R_k})-a_k(z_k),u_k(z_k+\frac{z}{R_k})\big).$$

    In the standard case (so also in the first case higher up), these maps are shown to converge to a non-constant finite energy plane using Arzela--Ascoli theorem. However, in this case, Arzela--Ascoli theorem does not apply because $u_k$ is not contained in a compact space. More precisely, $v_k(0)=u_k(z_k)$ and therefore $v_k$ is contained in a $M_{inv}$ around the origin. To overcome this, we define the following pseudoholomorphic maps, which is just a translation of the previous one in the $\R^+$-invariant direction. For $z\in B_{R_k}(-R_kz_k) \cap v_k^{-1}(M_{inv})$, we define:
    $$ \widetilde{v}_k(z):= \big(a_k(z_k+\frac{z}{R_k})-a_k(z_k),d_k(z_k+\frac{z}{R_k})-d_k(z_k), w_k(z_k+\frac{z}{R_k}) \big).$$

    We do this translation in the $\R^+$-invariant direction because $d_k(z_k)\to \infty$. We will now prove that $\widetilde{v}_k$ converges in $C^\infty_{loc}$ to a non-trivial finite energy plane.

      Let us denote the components of the map $ \widetilde{v}_k=(e_k,v_k)=(e_k,f_k,q_k)$

    It is clear from the reparametrization that
    $$ e_k(0)=0, f_k(0)=0, \text{ and } |\nabla  \widetilde{v}_k(0)|=1. $$
    The advantage of the reparametrization $\widetilde{v}_k$ with respect to the one given by $\overline{v}_k$ is that the convergence of $f_k$ is being taken care of as it is fixed at the origin and Arzela--Ascoli theorem applies to $q_k(z)$ as it belongs to the compact set $\partial K$. This was not the case for $\overline{v}_k$.

    Consider the domains $\Omega_k:= B_{R_k}(-R_kz_k) \cap B_{\epsilon_kR_k}(0)\cap v_k^{-1}(M_{inv})$.

    For $k$ sufficiently large, we claim that $v_k(\Omega_k) \subset M_{inv}$.
    Indeed, as $z\in B_{\epsilon_k R_k}$, $|z_k-\frac{z}{R_k}-z_k|<\epsilon_k$. As $v_k(z_k)\in M_{inv}$, we thus obtain that for $k>k_0$, $v_k(\Omega_k)\subset M_{inv}$.

     Furthermore it follows from $R_k \text{dist}(z_k,\partial D)\to \infty$ that $\bigcup\limits_{k>k_0} \Omega_k=\mathbb{C}$. The gradient boundedness (Equation \ref{eq:boundedgradient}) translates into
    $$|\nabla \widetilde{v}_k(z)|\leq 2 \text{ on } \Omega_k.$$
    By the $C^\infty_\text{loc}$-bounds (Theorem 4.3.4 in \cite{abbashofer}), we conclude that (up to choosing a subsequence) $\widetilde{v}_k$ converges in $C^\infty_{\text{loc}}$ to a $J$-holomorphic plane
    $$\widetilde{v}=(b,v): \mathbb{C} \to \R \times M_{inv}$$
    which is non-constant because $|\nabla \widetilde{v}(0)|=1$. We compute that the energy of $\widetilde{v}$ is finite using the standard arguments. Let $Q\subset \mathbb{C}$ be a compact set and take $k>k_0$ large. We obtain
    \begin{align}
        \sup_{\phi \in \mathcal{C}} \int_Q \widetilde{v}^*_kd(\phi\alpha) \leq \sup_{\phi \in \mathcal{C}} \int_{B_{R_k}(-R_kz_k)} \widetilde{v}^*_kd(\phi \alpha) \\
        =\sup_{\phi \in \mathcal{C}}\int_D \widetilde{u}^*_k d(\phi \alpha)=E(\widetilde{u}_k)\leq C.
    \end{align}
 We now take the limit $k \to \infty$ and take the supremum over all compact $Q\subset \mathbb{C}$ to obtain that $E(\widetilde{v})<\infty$. Moreover the image of $v$ lies in a compact subset of $M_{inv}$, to be precise in $\{0\}\times \partial K$ since this is true for all the maps $v_k$.

Hence we found a finite energy plane in the $\R^+$-invariant part of $M$, and by Theorem \ref{thm:finiteenergy} this yields a periodic orbit in $M_{inv}$. By the $\R^+$-invariance, this yields a $1$-parametric family of periodic orbits in every $\{cst\}\times \partial K$.

\end{proof}

\section{The singular Weinstein conjecture}\label{sec:singularweinstein}

The last subsections lead us to revisiting the standard approach in symplectic and contact topology to apply it in the singular set-up.

	Let us consider the desingularization of almost-convex $b^{2k}$-contact forms. As a consequence of the desingularization theorem, Theorem \ref{thm:desingularizationb2k}, the properties related to the family of contact structures coming from the desingularization can be translated to properties of the initial $b^{2k}$-contact form.
	
	\begin{lemma}
		Let $(M,\alpha)$ be an almost convex $b^{2k}$-contact manifold. Consider the family of contact forms $\alpha_\epsilon$ associated to the desingularization. Assume that there exists $\epsilon$ such that there is a periodic Reeb orbit of the Reeb vector field $R_{\alpha_\epsilon}$ outside of the $\epsilon$-neighbourhood $\mathcal{N}_\epsilon$. Then this orbit corresponds to a periodic orbit of the Reeb vector field $R_\alpha$.
	\end{lemma}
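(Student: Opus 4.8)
The plan is to exploit the \emph{locality} of the desingularization procedure. By Theorem~\ref{thm:desingularizationb2k}, the contact form $\alpha_\epsilon$ agrees with the $b^{2k}$-contact form $\alpha$ on the complement of the tubular neighbourhood $\mathcal{N}_\epsilon$ of $Z$. Since the Reeb vector field is uniquely determined by the contact form through the equations $\iota_R d\alpha = 0$ and $\alpha(R) = 1$, the two Reeb vector fields $R_{\alpha_\epsilon}$ and $R_\alpha$ must coincide on $M\setminus \mathcal{N}_\epsilon$. Note that on this open set $\alpha$ is an honest smooth contact form, since away from $Z$ the $b^{2k}$-structure is a genuine contact structure, so that $R_\alpha$ is a genuine smooth vector field there.

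First I would record this agreement $R_{\alpha_\epsilon}|_{M\setminus \mathcal{N}_\epsilon} = R_\alpha|_{M\setminus \mathcal{N}_\epsilon}$ carefully, observing that it is an immediate consequence of $\alpha_\epsilon = \alpha$ on that set together with the defining relations of the Reeb field. Next, let $\gamma\colon \R/(T\mathbb{Z}) \to M$ be the given periodic orbit of $R_{\alpha_\epsilon}$, with $\gamma(\R/(T\mathbb{Z})) \subset M\setminus \mathcal{N}_\epsilon$. Then $\dot\gamma(s) = R_{\alpha_\epsilon}(\gamma(s)) = R_\alpha(\gamma(s))$ for all $s$, so $\gamma$ is also an integral curve of $R_\alpha$; since it closes up after time $T$, it is a periodic orbit of $R_\alpha$ of the same period, lying entirely in $M\setminus Z$.

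I do not expect a genuine obstacle here: the statement is essentially the assertion that the desingularization is the identity outside $\mathcal{N}_\epsilon$. The only point deserving a line of care is to ensure that the \emph{whole} orbit, not merely one of its points, lies in the region where the two forms coincide; this is precisely the hypothesis that the orbit lies outside the $\epsilon$-neighbourhood $\mathcal{N}_\epsilon$, and since $R_\alpha$ is tangent to $Z$ the orbit cannot drift into $Z$ through $\mathcal{N}_\epsilon$. One may optionally remark that the converse also holds and that the correspondence is period-preserving, which makes the word ``corresponds'' in the statement precise.
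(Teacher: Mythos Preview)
Your proposal is correct and follows exactly the paper's approach: the desingularization leaves $\alpha$ unchanged outside $\mathcal{N}_\epsilon$, so the Reeb fields coincide there and the orbit carries over verbatim. The paper's own proof is the one-line remark ``the desingularization does not change the dynamics outside of the $\epsilon$-neighbourhood''; your write-up is simply a more explicit unpacking of that sentence (the aside about $R_\alpha$ being tangent to $Z$ is unnecessary, since the hypothesis already places the entire orbit outside $\mathcal{N}_\epsilon$).
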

	
	\begin{proof}
		The desingularization does not change the dynamics outside of the $\epsilon$-neighbourhood.
	\end{proof}
	
	Note that the same would hold for the desingularization of $b^{2k+1}$-contact structures, where the resulting geometric structure would is \emph{folded contact}, see \cite{MO}.

	Let $(M,\alpha)$ be an almost-convex compact $b^{2k}$-contact manifold of dimension $3$. Assume that the periodic Reeb orbits of $R_{\alpha_\epsilon}$ for fixed $\epsilon$ (which is known to exist due to \cite{taubes}) crosses the tubular neighbourhood $\mathcal{N}_\epsilon$ of $Z$. We will see that the desingularization changes the Reeb dynamics whenever the Reeb vector field is not everywhere regular or singular around a connected component of the critical set.
	
	\begin{lemma}
	Let $(M,\alpha)$ be a almost-convex $b^{2k}$-contact manifold. Then in the $\epsilon$-neighbourhood of the critical set, the Reeb flow associated to the desingularization is a reparametrization of the initial Reeb flow if and only if semi-locally, the Reeb vector field is everywhere regular or everywhere singular.
	\end{lemma}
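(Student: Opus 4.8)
The plan is to reduce the statement to an explicit computation of the two Reeb vector fields in a collar of a fixed connected component $Z_0$ of the critical set, followed by a pointwise comparison of their orbit foliations there. First I would fix a tubular neighbourhood $\mathcal{N}(Z_0)\cong Z_0\times(-\delta,\delta)$ with normal coordinate $z$, so that $Z_0=\{z=0\}$, and use almost convexity (Definition~\ref{def:bcontactconvex}) to write $\alpha=u\,\frac{dz}{z^{2k}}+\beta$ there, with $\beta=\pi^*\tilde\beta$ and $u\in C^\infty(\mathcal{N}(Z_0))$. By Theorem~\ref{thm:desingularizationb2k} the desingularization changes $\alpha$ only inside $\mathcal{N}_\epsilon$, where it amounts to replacing $\frac{dz}{z^{2k}}$ by $h_\epsilon(z)\,dz$ for a smooth positive function $h_\epsilon$ agreeing with $h_0(z):=z^{-2k}$ for $|z|\ge\epsilon$ (and bounded near $z=0$); thus $\alpha=u\,h_0\,dz+\beta$ and $\alpha_\epsilon=u\,h_\epsilon\,dz+\beta$. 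A short computation shows $d\alpha=h\,d_Zu\wedge dz+d\beta$ for $h\in\{h_0,h_\epsilon\}$ (the $dz$-part of $du$ and the exact correction $u\,dh\wedge dz$ both vanish), so $d\alpha$ and $d\alpha_\epsilon$ see the collar only through the scalar $h$.

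Next I would solve $\iota_R\alpha=1$, $\iota_R d\alpha=0$ directly. Writing $R=X+a\,\partial_z$ with $X$ tangent to the slices $\{z=\text{cst}\}$ and setting $c:=h\,a$, the equations become the $h$-independent linear system $d_Zu(X)=0$, $\iota_X d\beta=c\,d_Zu$, $uc+\beta(X)=1$ on each slice, whose unique solution $(X,c)$ is precisely the non-degeneracy datum of the $b^{2k}$-contact condition. Hence
\[ R_\alpha=X+z^{2k}c\,\frac{\partial}{\partial z},\qquad R_{\alpha_\epsilon}=X+\frac{c}{h_\epsilon}\,\frac{\partial}{\partial z}, \]
with the same $X$ and $c$, differing only by the factor $\mu(z):=h_0(z)/h_\epsilon(z)$ in front of the normal term: $\mu\equiv 1$ outside $\mathcal{N}_\epsilon$, while $\mu(z)\to\infty$ as $z\to 0$, so $\mu(z)\ne 1$ on a full open range of slices near $Z_0$.

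The heart of the argument is then that $R_{\alpha_\epsilon}$ is a positive reparametrization of $R_\alpha$ throughout $\mathcal{N}_\epsilon(Z_0)$ exactly when the two fields are everywhere positively proportional there. At a point with $X\ne 0$, $X$ and $\partial/\partial z$ are linearly independent, so proportionality on a slice where $\mu(z)\ne 1$ forces $c=0$; at a point with $X=0$ both fields are parallel to $\partial/\partial z$, and at a point with $c=0$ both equal $X$, so proportionality is automatic in those cases. Using openness, this shows the desingularization is a reparametrization near $Z_0$ if and only if $\{X\ne 0\}\cap\{c\ne 0\}=\emptyset$ on $\mathcal{N}_\epsilon(Z_0)$. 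Finally, $uc+\beta(X)=1$ rules out $X$ and $c$ vanishing simultaneously, so $\{X=0\}$ and $\{c=0\}$ are disjoint closed sets covering $Z_0$, and by connectedness of $Z_0$ one of them is all of $Z_0$: either $X\equiv 0$ on $Z_0$, i.e.\ $R_\alpha$ vanishes identically on $Z_0$ (the Reeb field is everywhere singular), or $c\equiv 0$ on $Z_0$, i.e.\ $R_\alpha=X$ stays tangent to the nearby slices and is non-vanishing since $\beta(X)=1$ (the Reeb field is everywhere regular). Conversely each of these two alternatives forces $\{X\ne 0\}\cap\{c\ne 0\}=\emptyset$ near $Z_0$, hence a reparametrization; this is the claimed equivalence.

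I expect the main work to lie in two places. First, pinning down the meaning of ``reparametrization of the Reeb flow in $\mathcal{N}_\epsilon$'': it should be read as coincidence of the oriented orbit foliations up to time change, and one must observe that on $Z_0$ itself the desingularized field need not vanish even in the singular case, then check this does not affect the statement because the relevant orbits are the non-trivial ones in $\mathcal{N}_\epsilon\setminus Z$. Second, the almost-convex (rather than strictly convex) case: there $u$, and a priori the solution $(X,c)$, depends on $z$, so I would need to verify that the dichotomy ``$X=0$ or $c=0$'' on $Z_0$ propagates to the nearby slices; this follows from the explicit form of the linear system above together with continuity, but must be spelled out (in the convex case it is immediate, since $X$ and $c$ are $z$-independent). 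The rest is the routine normal-form computation indicated above.
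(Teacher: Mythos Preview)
Your approach is essentially the same as the paper's: write both Reeb vector fields in a collar of $Z_0$ as $R_\alpha=X+gz^{2k}\partial_z$ and $R_{\alpha_\epsilon}=X+\frac{g}{f_\epsilon'}\partial_z$ (your $c$ is the paper's $g$, your $1/h_\epsilon$ their $1/f_\epsilon'$), and then argue that pointwise proportionality forces either $X\equiv 0$ or $g\equiv 0$ on the component. The paper's proof is a three-line sketch that simply writes down these two expressions and declares the conclusion ``clear''; you have supplied the actual linear-algebra argument (independence of $X$ and $\partial_z$, the normalization $uc+\beta(X)=1$ preventing common zeros, connectedness of $Z_0$) that justifies the dichotomy, and you correctly flag the two genuine subtleties the paper leaves implicit---what ``reparametrization'' means when the proportionality factor may vanish on $Z_0$, and how the dichotomy on $Z_0$ propagates to nearby slices in the merely almost-convex case. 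Your proposal is correct and strictly more complete than the paper's own argument.
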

	
	\begin{proof}
	As in Theorem \ref{thm:desingularizationb2k}, we write $R_\alpha= g z^{2k} \frac{\partial}{\partial z}+ X$ and the expression of the desingularized Reeb vector field is given by $R_{\alpha_\epsilon}=g \frac{1}{f_\epsilon'}\frac{\partial}{\partial z}+X$. The flow of the first one is a reparametrization of the second one if and only if $R_\alpha = fR_{\alpha_\epsilon}$ for a smooth function $f$. This is clearly only the case if the Reeb vector field is everywhere singular or everywhere regular.
	\end{proof}
	
	One is tempted to take the limit of $\epsilon \to 0$. However, the continuity of the family of periodic orbits with respect to $\epsilon$ cannot be guaranteed. Therefore, limit arguments do not work without any further assumptions on the $b^{2k}$-contact form. A necessary condition is non-degeneracy for the family of contact forms $\{\alpha_t \}_{t \in ]0,\epsilon]}$.
	
	In fact, periodic orbits can be associated to critical points of the action functional
	$$ \mathcal{A}_{\alpha_\epsilon}(\gamma)= \int_\gamma \alpha_\epsilon$$
	for $\gamma$ in the loop space $C^\infty(S^1,M)$. The non-degeneracy of the family of contact forms $\{\alpha_t \}_{t \in ]0,\epsilon]}$ can be thought of as non-degeneracy as critical points in this infinite-dimensional space.
	
	Instead of working with the desingularization (whose draw-back is the restriction on the parity of the singularity and the non-degeneracy condition), it may be more appropriate to tackle the problem using variational methods but changing the variational set-up. The authors suspect that working with the space of piece-wise smooth loops instead of $C^\infty(S^1,M)$ could be a good starting point to capture not only periodic orbits, but also heteroclinic orbits that manifest themselves by introducing the aforementioned trap construction. To seize those two different types of orbits, we introduce the notion of singular periodic orbit.
	
	\begin{definition}
		Let $M$ be a manifold with hypersurface $Z$. A \emph{singular periodic orbit} $\gamma$ is an orbit such that $\lim_{t \to  \pm \infty} \gamma(t) =p_{\pm} \in Z$ where $R_{\alpha}(p_{\pm})=0$.
	\end{definition}

	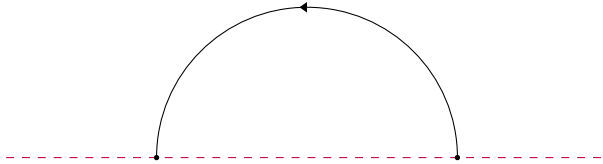
\begin{figure}[hbt!]\label{fig:singularorbit}
\begin{center}
\begin{tikzpicture}[scale=2]
 \draw (1,0) arc (0:180:1 and 1);
 \draw[dashed][color=purple] (-2,0) --  (2,0);
 \fill (1,0) circle[radius=0.5pt];
 \fill (-1,0) circle[radius=0.5pt];
\fill [shift={(-0.05,1)},scale=0.05,rotate=180]   (0,0) -- (-1,-0.7) -- (-1,0.7) -- cycle; 
\end{tikzpicture}

\caption{Singular periodic orbits}

\end{center}
\end{figure}

	Given the non-existence of periodic orbits away from the critical set as proved in Section \ref{sec:counterexample}, the question about the existence for an appropriate invariant dynamical set raises. It turns out that in the examples without periodic orbits away from $Z$, there always exists singular periodic orbits.

	\begin{example}
Consider the $b$-contact manifold $S^3\subset (\R^4,\omega)$ where $\omega \in {^b}\Omega^2(\R^4)$ is the standard $b$-symplectic form as in Example \ref{example:S3}. As already observed, the Reeb vector field given by
$$R_\alpha=2x_1^2\frac{\partial}{\partial y_1}-x_1y_1\frac{\partial}{\partial x_1}+2x_2\frac{\partial }{\partial y_2}-2y_2\frac{\partial}{\partial x_2}$$
admits infinitely many periodic orbits on $Z=\{x_1=0\}$ but none away from the critical set. However, there is a singular periodic orbit originating from the two fixed points at the critical surface given by $(0,\pm 1,0,0)$.
The orbit is topologically given by circle and dynamically speaking, it has two marked fixed points corresponding to the two fixed points.
\end{example}

\begin{example}
Consider the example of the $3$-torus $(\mathbb{T}^3,\sin \varphi \frac{dx}{\sin x}+\cos \varphi dy)$ as in Example \ref{example:3torus}. The Reeb vector field is given by
$$R_\alpha=\sin \varphi \sin x \frac{\partial}{\partial x}+\cos \varphi \frac{\partial}{\partial y}$$
has no periodic orbits away from $Z$ and is singular on the critical set when $\varphi=\pm \frac{\pi}{2}$. The orbits outside of $Z$ limiting to the fixed points set are all singular orbits.
\end{example}

	 An appropriate reformulation of Weinstein in the singular setting would be, that there either exists periodic orbits away from the critical set (as is proved under the assumptions as in Section \ref{sec:bmhofer}) or a \emph{singular periodic orbit}.

	\begin{conjecture}[Singular Weinstein conjecture]
		Let $(M,\alpha)$ be a compact  $b^m$-contact manifold. Then there exists at least one singular periodic orbit.
	\end{conjecture}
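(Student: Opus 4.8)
This statement is an open conjecture, so rather than a proof I outline the strategy I would pursue, using the machinery assembled in the preceding sections. The first observation is that the \emph{endpoints} of a hypothetical singular periodic orbit are forced to exist, at least in dimension $3$: by Proposition \ref{prop:unonconstant3dim} the function $u|_Z$ is non-constant on the closed critical set, so it attains a maximum and a minimum, and by Proposition \ref{prop:ReebHamdim3} (via $\iota_{R_\alpha}\Theta = du$) these are precisely zeros of $R_\alpha|_Z$. Thus one always has at least two candidate points $p_\pm$ with $R_\alpha(p_\pm)=0$, and the conjecture becomes a \emph{connecting-orbit problem}: produce a Reeb trajectory in $M\setminus Z$ whose $\alpha$-limit and $\omega$-limit sets are such zeros -- the same heteroclinic picture that appears explicitly in the $S^3$ and $\mathbb{T}^3$ examples above. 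In its disjunctive form one allows also a genuine periodic orbit away from $Z$ as an admissible solution.

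The first route is via desingularization. For an almost-convex $b^{2k}$-contact form (and the folded-contact analogue for $b^{2k+1}$), Theorem \ref{thm:desingularizationb2k} produces a family $\alpha_\epsilon$ of genuine contact forms; in dimension $3$ each carries a periodic Reeb orbit $\gamma_\epsilon$ by Taubes's theorem \cite{taubes}. Two cases arise. If for some sequence $\epsilon_n\to 0$ the orbit $\gamma_{\epsilon_n}$ stays outside the tube $\mathcal{N}_{\epsilon_n}$, then it is already a periodic orbit of $R_\alpha$ away from $Z$ and the disjunctive statement holds. Otherwise every $\gamma_\epsilon$ (for $\epsilon$ small) meets $\mathcal{N}_\epsilon$; there $R_{\alpha_\epsilon}=\tfrac{g}{f_\epsilon'}\partial_z + X$ with $f_\epsilon'\to\infty$ on $Z$, so the normal component of the flow collapses and one expects the reparametrized orbits to converge, after a Gromov-type compactness argument, to a \emph{broken} trajectory: finitely many arcs of the genuine $R_\alpha$-flow in $M\setminus Z$, joined at zeros of $R_\alpha|_Z$ where the broken orbit spends infinite time, i.e.\ an (iterated) singular periodic orbit. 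Making this precise requires uniform control of the lengths of the off-tube arcs and a proof that the breaking points land on zeros of $R_\alpha$, not merely on $Z$.

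The second, and I think more promising, route is variational, along the lines the authors already suggest: replace $C^\infty(S^1,M)$ by a space $\Lambda$ of piecewise-smooth loops, stratified by the number and location of the breaks, and extend the action $\mathcal{A}_\alpha(\gamma)=\int_\gamma\alpha$ to it; in the interior of the top stratum the critical points are smooth periodic Reeb orbits, while critical points on boundary strata whose breaks sit at zeros of $R_\alpha|_Z$ are exactly singular periodic orbits. One would then need: (i) a functional-analytic completion of $\Lambda$ on which $\mathcal{A}_\alpha$ is differentiable and satisfies a Palais--Smale-type condition -- delicate because $\alpha$ blows up along $Z$, so the weighted Sobolev norms must absorb the $dz/z^m$ singularity; (ii) a topological mechanism (a linking or mountain-pass configuration, or non-triviality of a relative homology group of $\Lambda$ modulo constant loops) forcing a non-zero critical value; and (iii) a deformation lemma compatible with the stratification, so that the minimax critical point is realised either by a smooth loop (periodic orbit away from $Z$) or in the singular stratum (singular periodic orbit).

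The principal obstacle, common to both routes, is \emph{compactness near $Z$}: ruling out degeneration of the candidate orbit into $Z$ itself (the off-tube part shrinking to a point), or escape to infinite period without converging to zeros of $R_\alpha$, and ensuring that the limiting breaking points are genuine equilibria rather than arbitrary points of $Z$. A second difficulty is that outside dimension $3$ -- and for $b^{2k+1}$-forms, where desingularization yields folded rather than contact structures -- the existence of $\gamma_\epsilon$ is only partially known; there one would either restrict to overtwisted $\alpha$ and feed in Theorem \ref{thm:bot}, or impose a non-degeneracy hypothesis on the family $\{\alpha_\epsilon\}$ so that the orbits $\gamma_\epsilon$ vary continuously in $\epsilon$ and the limit $\epsilon\to 0$ can be taken. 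I would expect the first unconditional results to be for $3$-dimensional $b^{2k}$-contact manifolds.
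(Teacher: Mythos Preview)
You correctly recognise that this is an open conjecture; the paper does not prove it either, but rather motivates it through the examples on $S^3$ and $\mathbb{T}^3$ and sketches exactly the two avenues you describe: the desingularization family $\alpha_\epsilon$ (with the same caveat you raise, that continuity of the orbits in $\epsilon$ requires a non-degeneracy assumption and that ``limit arguments do not work without any further assumptions''), and the replacement of $C^\infty(S^1,M)$ by piecewise-smooth loops in the variational picture. Your outline is thus fully aligned with the paper's own discussion, and in fact goes further in articulating the expected breaking/compactness issues and the stratified critical-point picture; nothing you write is at odds with the paper, and the obstacles you identify (compactness near $Z$, parity and dimension restrictions for desingularization, landing of break points on genuine zeros of $R_\alpha$) are precisely the ones the paper leaves open.
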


    Note that the possible existence of $b^m$-plugs does not contradict this conjecture: morally, the $b^m$-contact plugs changes periodic orbits to singular periodic orbits. Plugs for the $b^m$-Reeb flow would give rise to $b^m$-symplectic manifolds with proper smooth Hamiltonian functions that do not admit periodic orbits on any level-set. Once more, our construction replaces periodic orbits by singular periodic orbits.

In particular if the singular Weinstein conjecture holds true in this new singular set-up then,

\begin{corollary}[Corollary of singular Weinstein conjecture]
Any Beltrami field in a manifold with a cylindrical end has at least one of the two:
\begin{enumerate}
\item a periodic orbit.
\item an orbit that goes to infinity for $t \rightarrow +\infty$ and $t \rightarrow -\infty$.
\end{enumerate}
\end{corollary}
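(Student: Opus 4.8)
The plan is to read this off from the singular Weinstein conjecture together with the $b$-contact/Beltrami dictionary of \cite{CMP} (and \cite{CMPP} in higher dimensions). First I would set up the correspondence carefully: by the identification in \cite{CMP}, a manifold $N$ with a cylindrical end is the complement $M\setminus Z$ of the critical hypersurface $Z$ in a compact $b$-manifold $(M,Z)$, the end being a collar of $Z$ pushed off to infinity. Under this identification, an orbit of a vector field on $N$ escapes to infinity along the cylindrical end as $t\to+\infty$ (resp.\ $t\to-\infty$) precisely when, viewed inside $M$, it converges to a point of $Z$. Moreover, \cite{CMP} shows that a nonsingular rotational Beltrami field $X$ on $N$ is a reparametrization of the Reeb vector field $R_\alpha$ of a $b$-contact form $\alpha$ on $M$, the reparametrization being by the positive function $\alpha(X)=g(X,X)$; in particular $X$ and $R_\alpha$ have the same unparametrized trajectories on $N=M\setminus Z$.

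Next I would apply the singular Weinstein conjecture to the compact $b$-contact manifold $(M,\alpha)$. In the dichotomous reading of the conjecture used throughout this section, this produces one of two objects: either (a) a genuine periodic Reeb orbit of $R_\alpha$ contained in $M\setminus Z$, or (b) a singular periodic orbit $\gamma$, i.e.\ an orbit of $R_\alpha$ with $\lim_{t\to\pm\infty}\gamma(t)=p_\pm\in Z$ and $R_\alpha(p_\pm)=0$. Since in dimension $3$ the Reeb flow preserves $Z$ (Proposition \ref{prop:ReebHamdim3}), an orbit of type (b) that is not entirely contained in $Z$ stays in $M\setminus Z$ for all finite time and meets $Z$ only asymptotically.

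It then remains to translate each alternative back to $N$. In case (a) the closed $R_\alpha$-orbit lies in $N$, where $X$ is a strictly positive reparametrization of $R_\alpha$; hence its image is also a closed orbit of $X$, giving conclusion (1). In case (b) the trajectory of $\gamma$ lies in $N$ except for its two limit points $p_\pm\in Z$; since $X$ and $R_\alpha$ share trajectories on $N$, the $X$-orbit through any point of $\gamma$ traces the same curve, and by the identification above, convergence to $p_\pm\in Z$ corresponds to escape to infinity along the cylindrical end as $t\to\pm\infty$. One must still verify that this $X$-orbit is complete, i.e.\ that the reparametrization by $g(X,X)$ does not force it to reach infinity in finite time; this is where the precise model of the end in \cite{CMP} (equivalently, the behaviour of the metric and of $\alpha$ near $Z$) is used, and it yields completeness and hence conclusion (2).

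I expect the last point to be the main obstacle: making rigorous the dictionary between ``converges to a zero of $R_\alpha$ on $Z$'' and ``tends to infinity for $t\to+\infty$ and $t\to-\infty$'', handling the reparametrization function $g(X,X)$ (which a priori degenerates towards $Z$) and the resulting completeness of the $X$-orbit. The remaining steps are a direct transcription of the $b$-contact/Beltrami correspondence together with the (conjectural) existence statement, so no further genuinely new input is needed.
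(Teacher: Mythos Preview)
The paper does not actually prove this corollary: it is stated immediately after the singular Weinstein conjecture with the words ``In particular if the singular Weinstein conjecture holds true in this new singular set-up then,'' and is followed only by a picture. Your proposal is precisely the intended argument---translate the Beltrami field into a $b$-Reeb vector field via the correspondence in \cite{CMP}, invoke the conjecture, and translate back---and in fact you are more careful than the paper, which leaves the dictionary between ``limits to a zero of $R_\alpha$ on $Z$'' and ``escapes to infinity along the cylindrical end'' implicit.

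Two small remarks. First, the tangency of the Reeb vector field to $Z$ is not special to dimension $3$; it holds for any $b^m$-contact form (the Reeb field is a $b^m$-vector field), so your case (b) argument works in all dimensions once one uses \cite{CMPP}. Second, the completeness issue you flag is genuine and is not addressed in the paper; the corollary is stated at the same heuristic level as the conjecture itself, so the paper is implicitly absorbing this into the ``manifold with cylindrical end'' model of \cite{CMP} rather than verifying it.
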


Both situations are illustrated below in Figure \ref{fig:bbeltrami}

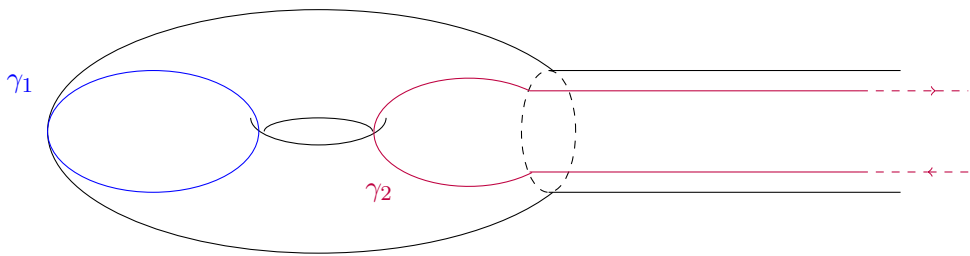
\begin{figure}[!h]
\begin{center}
\begin{tikzpicture}[scale=1.8]
%

\draw (-2,0,0) arc (-180:-30:2 and 0.9);
\draw (-2,0,0) arc (-180:-330:2 and 0.9);

\draw (0.4,0,0) arc (0:180:0.4 and 0.1);
\draw (-0.5,0.1,0) arc (-180:0:0.5 and 0.2);

\draw (1.7,0,0)[dashed] ellipse (0.2 and 0.45);

\draw (1.7,0.45,0)--(4.3,0.45,0);
\draw (1.7,-0.45,0)--(4.3,-0.45,0);

\draw [blue] (-1.22,0,0) ellipse (0.78 and 0.45);

\draw [purple] (1.56,0.3,0)--(4,0.3,0);
\draw [purple] (1.56,-0.3,0)--(4,-0.3,0);
\draw [purple] (1.56,0.3,0) arc (50:312:0.7 and 0.4);
\draw [purple,dashed, ->] (4,0.3,0)--(4.56,0.3,0);
\draw [purple,dashed, ->] (4.8,-0.3,0)--(4.5,-0.3,0);
\draw [purple,dashed] (4.56,0.3,0)--(4.8,0.3,0);
\draw [purple,dashed] (4,-0.3,0)--(4.5,-0.3,0);

\draw (0.45,-0.45,0)[purple] node[scale=1]{$\gamma_2$};
\draw (-2.2,0.35,0)[blue] node[scale=1]{$\gamma_1$};
\end{tikzpicture}
\end{center}
 \caption{Periodic orbits on the regular part (in blue) and periodic orbits going to infinity (in purple).}
 \label{fig:bbeltrami}
 \end{figure}

 The authors believe that techniques, similar to \cite{Ginzburg,GinzburgGurel} can be adapted to give examples of level-sets of $b^m$-symplectic manifolds containing no singular periodic orbit. This would be a counter-example to the singular Hamiltonian Seifert conjecture.

	\begin{conjecture}
		Let $(M,\omega)$ be a $b^m$-symplectic manifold. There exists a $H\in C^\infty(M)$ proper, smooth Hamiltonian whose level-sets do not contain any singular periodic orbits.
	\end{conjecture}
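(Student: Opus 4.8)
The plan is to reduce the statement to a purely local question about the zeros of the Hamiltonian vector field on the critical set, and then to import a Ginzburg--Gürel type aperiodicity mechanism to rule out the connecting orbits out of which singular periodic orbits are built. First I would unwind the definition in the Hamiltonian setting: a singular periodic orbit $\gamma$ of $X_H$ satisfies $\lim_{t\to\pm\infty}\gamma(t)=p_\pm\in Z$ with $X_H(p_\pm)=0$, and since $H$ is constant along its flow, $p_+$, $p_-$ and all of $\gamma$ lie on a single level set $\Sigma_c=H^{-1}(c)$. Hence a singular periodic orbit is exactly a homoclinic or heteroclinic connection, inside one level set, between zeros of $X_H$ lying on $Z$. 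In particular, \emph{if $X_H$ has no zeros on $Z$ then no level set carries a singular periodic orbit}, so the whole problem reduces to producing a proper $H$ whose Hamiltonian vector field either omits $Z$-zeros entirely or whose $Z$-zeros are never joined by connecting orbits.

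Next I would record the local picture near $Z$. Writing the $b^m$-symplectic form in the Guillemin--Miranda--Pires normal form $\omega=\frac{dz}{z^{m}}\wedge dt+\omega_0$, with $\omega_0$ leafwise symplectic and $Z=\{z=0\}$, the equation $\iota_{X_H}\omega=dH$ yields $X_H=z^{m}(\partial_t H)\partial_z-z^{m}(\partial_z H)\partial_t+X_H^{\omega_0}$, so on $Z$ the field $X_H|_Z=X_H^{\omega_0}|_{z=0}$ is the \emph{leafwise} Hamiltonian vector field of $H|_Z$; its zeros are precisely the leafwise critical points of $H$. The transverse equation $\dot z=z^{m}(\partial_t H)$ keeps $z$ strictly monotone near a zero $p$ with $\partial_t H(p)\neq 0$, so the orbit through a nearby point limits onto $Z$ on exactly one time-side. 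These one-sided germs are exactly what a singular periodic orbit must glue together, and the task becomes: prevent the transverse unstable manifolds of the $Z$-zeros from ever meeting their transverse stable counterparts within a common level set.

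The main construction I would attempt mirrors Ginzburg--Gürel and Herman. Working in dimension $2n\geq 6$, I would take $M=\R^{2n}$ (or a suitable $b^m$-symplectic total space) with $Z$ a coordinate hyperplane and design $H$ so that: (a) $H$ is proper; (b) the leafwise critical set on $Z$ is confined to a controlled region; and (c) the transverse-to-$Z$ dynamics governed by $z^{m}(\partial_t H)\partial_z-z^{m}(\partial_z H)\partial_t$ is modeled on an embedded \emph{aperiodic plug}, so that an orbit leaving a $Z$-zero along its unstable manifold is captured and never returns to any stable manifold. In other words, I would transplant the non-recurrence that Ginzburg--Gürel use to destroy periodic orbits into the role of destroying connecting orbits: the same ``an orbit that enters never closes up'' phenomenon that voids periodicity also voids the two-sided asymptotics required of a singular periodic orbit. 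The desingularization bookkeeping of Theorem \ref{thm:desingularizationb2k} together with the plug/trap technology of Section \ref{sec:trap} would make the plug compatible with the $b^m$-structure across $Z$, and one would finally verify that \emph{every} level set is free of connecting orbits, not merely a distinguished value of $c$.

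The main obstacle, I expect, is precisely the tension between properness and the absence of connecting orbits. Compactness of $\Sigma_c\cap Z$ forces zeros of $X_H$ on $Z$ through a Poincaré--Hopf/Morse argument (a proper, bounded-below $H$ attains a leafwise minimum where $d(H|_{\mathrm{leaf}})=0$), so the clean ``no $Z$-zeros'' route is obstructed and one is committed to the plug route. The delicate point is to run the aperiodicity argument not for a closed orbit but for a \emph{bi-infinite} connecting orbit whose two ends both limit onto $Z$, while simultaneously respecting the rigidity of the $b^m$-normal form near $Z$ (the transverse dynamics is not free, being slaved to $\partial_t H$) and keeping $H$ proper on all level sets at once. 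Establishing that the aperiodic plug can be realized inside this constrained transverse dynamics, and that no heteroclinic chain threads between distinct $Z$-zeros on a common level set, is where the real work lies, and is the step I expect to occupy the bulk of a full proof.
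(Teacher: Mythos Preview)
The statement you are attempting to prove is labelled a \emph{Conjecture} in the paper, and the paper offers no proof of it. The only content the authors supply is the sentence immediately preceding it: they ``believe that techniques, similar to \cite{Ginzburg,GinzburgGurel} can be adapted to give examples of level-sets of $b^m$-symplectic manifolds containing no singular periodic orbit.'' There is therefore nothing to compare your argument against; you are not reconstructing a known proof but sketching a possible attack on an open problem.

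That said, your outline is in the spirit of the paper's hint: invoking a Ginzburg--G\"urel aperiodicity mechanism is exactly what the authors suggest. Your local computation of $X_H$ in the normal form near $Z$ is correct, and your reformulation of a singular periodic orbit as a homoclinic/heteroclinic connection between leafwise critical points of $H|_Z$ inside a single energy level is the right translation of the definition. But your proposal is, as you yourself acknowledge in the final paragraph, a plan and not a proof: the decisive step---realising an aperiodic mechanism inside the constrained transverse dynamics $\dot z = z^{m}\partial_t H$, $\dot t = -z^{m}\partial_z H$ while keeping $H$ proper and handling \emph{every} level set---is left entirely open. Two further cautions. First, the classical Ginzburg and Ginzburg--G\"urel constructions produce a \emph{single} aperiodic energy level, not a Hamiltonian aperiodic on all levels, so the ``all level-sets'' reading you adopt already goes beyond the existing smooth technology; the paper's wording is ambiguous, and by analogy with Theorem~\ref{thmginzburg} one could equally read the conjecture as asking for one bad level. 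Second, your appeal to Theorem~\ref{thm:desingularizationb2k} and the trap of Section~\ref{sec:trap} imports contact-side constructions; porting them to the $b^m$-symplectic Hamiltonian setting with control on connecting (rather than periodic) orbits is itself nontrivial and is not carried out anywhere in the paper. In short, your roadmap is reasonable and aligned with the authors' expectations, but the statement remains a conjecture after your proposal just as it does in the paper.
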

	
The definition of singular periodic orbits opens the door to an exciting brave new world which we are eager to explore: Can these solutions be accepted as "periodic orbits" in the singular context and thus can Weinstein's conjecture  be reformulated in those terms? Can the Rabinowitz machinery in \cite{Rabi} be extended to this set-up? Can a Floer complex be built upon the critical points corresponding to  periodic orbits with marked singular points? Understanding these questions is an endeavour that leads to a paramount extension of the Floer techniques to the barely unexplored land of singular symplectic and contact topology and, in particular, ventures into Poisson topology.

\end{document}